\def\Z{\mathbb{Z}}
\def\N{{\mathbb N}}
\def\End{\operatorname{End}}
\def\Im{\operatorname{Im}}
\def\Hom{\operatorname{Hom}}
\def\Ext{\operatorname{Ext}}
\def\dualita#1#2{\mathrel{
                 \mathop{\vcenter{
                 \offinterlineskip
                 \hbox to 0.6truecm{\rightarrowfill}%\kern ...
                 \hbox to 0.6truecm{\leftarrowfill}}}%
                 \limits_{#2}^{#1}}}
\DeclareMathOperator{\Ann}{Ann}
\DeclareMathOperator{\Ker}{Ker}
\def\Muni#1#2#3{M_{#2,#1,#3}}
\def\Uni#1#2{U_{#1,#2}}
\newtheorem{theorem}{Theorem}[section]
\newtheorem{corollary}[theorem]{Corollary}
\newtheorem{definition}[theorem]{Definition}
\newtheorem{example}[theorem]{Example}
\newtheorem{lemma}[theorem]{Lemma}
\newtheorem{proposition}[theorem]{Proposition}
\newtheorem{remark}[theorem]{Remark}
\newcommand*{\lMod}{\textrm{\textup{-Mod}}}
\def\Ext{\operatorname{Ext}}
\begin{document}

\title[Pr\"ufer modules over Leavitt path algebras]{Pr\"ufer modules over Leavitt path algebras}

\author{Gene Abrams}
\address{Department of Mathematics, University of Colorado, 1420 Austin Bluffs Parkway, 
Colorado Springs, CO 80918 U.S.A.}
%    Current address
%\curraddr{Department of Mathematics and Statistics,
%Case Western Reserve University, Cleveland, Ohio 43403}
\email{abrams@math.uccs.edu}
%    \thanks will become a 1st page footnote.
%.  \ \  $^*$corresponding author \ \ abrams@math.uccs.edu \ \ 7192553182 
%and its faculty, for its warm hospitality and support by the Visiting Scientist 2012-13 grant of the Universit\`{a} degli Studi di Padova.

%    Information for second author
\author{Francesca Mantese}
\address{Dipartimento di Informatica, Universit\`{a} degli Studi di Verona,  strada le grazie 15, 37134  Verona, Italy}
\email{francesca.mantese@univr.it}
%\thanks{Support information for the second author.}

\author{Alberto Tonolo}
\address{Dipartimento di Matematica Tullio Levi-Civita, Universit\`{a} degli Studi di Padova, via Trieste 63,  35121, Padova, Italy}
\email{tonolo@math.unipd.it}

%    General info
%\subjclass{16S99}
%\date{}

%\dedicatory{This paper is dedicated to our advisors.}

\begin{abstract}
Let $L_K(E)$ denote the Leavitt path algebra associated to the finite graph $E$ and field $K$.  For any closed path $c$  in $E$, we define and investigate the uniserial, artinian, non-noetherian left $L_K(E)$-module $U_{E,c-1}$.   The unique simple factor of each  proper submodule of $U_{E,c-1}$ is  isomorphic to the Chen simple module $V_{[c^\infty]}$.   In our main result, we classify those closed paths $c$ for which $U_{E,c-1}$ is injective.  In this situation, $U_{E,c-1}$ is the injective hull of $V_{[c^\infty]}$. 

\smallskip

 \ \ \ Keywords:   Injective module, Leavitt path algebra, Chen simple module, Pr\"ufer module

\ \ \ 2010 AMS Subject Classification:  16S99 (primary)

\ \ \  corresponding author:  Abrams 

\end{abstract}

\maketitle

\section{Introduction}

Leavitt path algebras have a well-studied,  extremely tight relationship with their {\it pro}jective  modules.   %Given a finite directed graph $E$, one constructs the graph monoid $M_E = (\mathbb{Z}^+)^{|E^0|} / \mathcal{R}$ as the free abelian monoid on the vertex set $E^0$, modulo relations $\mathcal{R}$ which are determined by the edge set $E^1$.  Let $d$ denote $\sum_{v\in E^0}v$ in $M_E$.   Then by \cite[Theorem 3.5]{AMP} $L_K(E)$ is the universal $K$-algebra for which $(\mathcal{V}(L_K(E)), [1_{L_K(E)}]) \cong (M_E, d)$.  Less formally, $L_K(E)$ is the universal (a.k.a. 'Bergman') algebra for which the monoid of finitely generated projective modules naturally gives the graph monoid $M_E$.  
On the other hand, very little is heretofore known about the structure of the {\it in}jective modules over $L_K(E)$.   While the self-injective Leavitt path algebras have been identified in \cite{ARS}, we know of no study of the structure of injective modules over Leavitt path algebras (other than those arising as left ideals).   

We initiate such a study in this article.     For each closed path $c$ in $E$ we construct the  {\it Pr\"ufer module} $\Uni E{c-1}$, recalling the classical construction of Pr\"ufer abelian groups. \textcolor{black}{These modules $\Uni E{c-1}$ are Pr\"ufer also in the sense of Ringel \cite{R6};  indeed, they admit a surjective locally nilpotent endomorphism (see Remark~\ref{rem:quotientofU}).}  In our main result (Theorem \ref{thm:injective}),   we give necessary and sufficient conditions for the injectivity of $\Uni E{c-1}$.  In this case,  $\Uni E{c-1}$ is precisely the injective hull of the Chen simple module $V_{[c^\infty]}$. \textcolor{black}{Our construction is similar to that established by Matlis \cite{Mat} for modules over various commutative noetherian rings, but in a highly noncommutative, non-noetherian setting. }   

Perhaps surprisingly, achieving Theorem \ref{thm:injective} relies on a set of highly nontrivial tools, including:    some general results about uniserial modules over arbitrary associative unital rings;  an explicit description of a projective resolution for $V_{[c^\infty]}$;  a Division Algorithm in $L_K(E)$ with respect to the element $c-1$;  the fact that every Leavitt path algebra is B\'{e}zout (i.e., that every finitely generated one-sided ideal is principal);  and two types of  Morita equivalences for Leavitt path algebras (one of which relates each graph having a source cycle to  a graph having a source loop, the other of which eliminates source vertices).   

The article is organized as follows.  In Section \ref{sec:generalring} we construct what we call ``Pr\"ufer-like modules" over arbitrary unital rings.   In Section \ref{section:Chen} we remind the reader of the construction of the Leavitt path algebra $L_K(E)$ for a directed graph $E$ and field $K$, and describe the Chen simple $L_K(E)$-module $V_{[p]}$ corresponding to an infinite path $p$ arising from $E$.   Specifically, if $c$ is a closed path in $E$, we may build the Chen simple module $V_{[c^\infty]}$.   Continuing our focus on closed paths $c$ in $E$,   in Section \ref{divisionalgorithmsection} we describe a Division Algorithm for arbitrary  elements of $L_K(E)$ by the specific element $c - 1$.   With the discussion from these three sections in hand, we are then in position in Section \ref{Umodulessection} to construct the Pr\"ufer-like   $L_K(E)$-module $U_{E,c-1}$ corresponding to $c-1$.  This sets the stage for our aforementioned  main result (Theorem \ref{thm:injective}), which we present in Section \ref{InjectivePrufersection}.   While one direction of the proof of Theorem \ref{thm:injective} is not difficult, establishing the converse is a much heavier lift; we complete the proof in Sections \ref{Sectiongeneraltosourcecycle} and \ref{Sectionsourcecycletosourceloop}.   Along the way, we will establish in Section \ref{Sectionsourcecycletosourceloop} that the endomorphism ring of $U_{E,c-1}$ is isomorphic to the ring $K[[c-1]]$ of formal power series in $c-1$ with coefficients in $K$, \textcolor{black}{exactly as the ring of $p$-adic integers is isomorphic to the endomorphism ring of $\mathbb Z(p^\infty)$}. 

Unless otherwise stated, all modules are left modules.    
$\N$ denotes the set $\{ 0, 1, 2, \dots\}$.

 \section{Pr\"ufer-like modules}
 \label{sec:generalring}
 
In this section we develop a general ring-theoretic framework for the well-known Pr\"ufer abelian groups $\Z(p^\infty)$.   This framework will provide us with the appropriate context in which to construct the $L_K(E)$-modules $U_{E,c-1}$.  

 Let $R$ be an associative ring with $1\not=0$ and $a\in R$. 
{\it  For the remainder of the section we assume that $a$ is not a right zero divisor}  (i.e., that right multiplication $\rho_a:R\to R$ via $r\mapsto ra$ is a monomorphism of left $R$-modules), {\it and that $a$ is not left invertible} (i.e., that $Ra \neq R$).  
 For each integer $n \in \N_{\geq 1}$ we define the left $R$-module
 $$\Muni nRa:=R/Ra^n,$$ 
 and we denote by  $\eta_{n,a}$  the canonical projection $R\to \Muni nRa$.
By the standing assumptions on $a$, each $\Muni nRa$ is a nonzero  cyclic left $R$-module generated by $1+Ra^n$.  
Moreover, for each $1\leq i<\ell$ we have the following monomorphism of left $R$-modules
\[\psi_{R,i,\ell}:\Muni {i}Ra\to \Muni {\ell}Ra,\quad \mbox{via} \quad  1+Ra^i\mapsto a^{\ell-i}+Ra^{\ell}.\]
\textcolor{black}{The cokernel of $\psi_{R,i,\ell}$ is equal to $\Muni \ell Ra/R(a^{\ell-i}+Ra^\ell)= (R/Ra^\ell)/(Ra^{\ell-i}/Ra^\ell)\cong \Muni {\ell-i} Ra$.}

The left $R$-modules $\Muni nRa$ can be recursively characterized in a categorical way.  

\begin{proposition}
For each $n\geq 2$ the following diagram of left $R$-modules is a pushout.  
%$R$-module $\Muni nRa$ is isomorphic to the pushout $P$ of the diagram
\[\xymatrix{R\ar[d]_{\eta_{n-1,a}}\ar[r]^{\rho_a}&R\ar[d]^{\eta_{n,a}}
%\ar[r]&M_1\ar@{=}[d]\ar[r]&0
\\
\Muni {n-1}Ra \ar[r]^{\psi_{R,n-1,n}}&\Muni {n}Ra.}\]
\end{proposition}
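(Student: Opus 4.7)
The plan is to verify the universal property of the pushout directly by chasing elements. First I would check that the square commutes: going across then down sends $r \in R$ to $ra + Ra^n$, and going down then across sends $r$ to $r + Ra^{n-1}$, then to $ra + Ra^n$. These agree.

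Next, suppose $N$ is any left $R$-module equipped with $R$-homomorphisms $f\colon R\to N$ (from the top-right corner) and $g\colon \Muni{n-1}Ra\to N$ satisfying the compatibility $f\circ\rho_a=g\circ\eta_{n-1,a}$. I need to produce a unique $h\colon \Muni nRa\to N$ with $h\circ\eta_{n,a}=f$ and $h\circ\psi_{R,n-1,n}=g$. Uniqueness is immediate: since $\Muni nRa$ is generated by $1+Ra^n=\eta_{n,a}(1)$, the constraint $h\circ\eta_{n,a}=f$ forces $h(1+Ra^n)=f(1)$, which determines $h$ as a left $R$-module map.

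For existence, I would define $h(r+Ra^n):=f(r)$. The only nontrivial point is well-definedness, i.e.\ that $f$ vanishes on $Ra^n$. For $ra^n\in Ra^n$, I write $ra^n=\rho_a(ra^{n-1})$ and apply the compatibility hypothesis:
\[
f(ra^n)=f\bigl(\rho_a(ra^{n-1})\bigr)=g\bigl(\eta_{n-1,a}(ra^{n-1})\bigr)=g(ra^{n-1}+Ra^{n-1})=0,
\]
where the last equality uses $n\geq 2$, so that $ra^{n-1}\in Ra^{n-1}$. With $h$ well defined, the identity $h\circ\eta_{n,a}=f$ holds by construction, and $h\circ\psi_{R,n-1,n}(r+Ra^{n-1})=h(ra+Ra^n)=f(ra)=g(r+Ra^{n-1})$ by the compatibility, so both triangles commute.

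There is no substantive obstacle here; the argument is a straightforward element chase. The only subtlety worth flagging is exactly where the hypothesis $n\geq 2$ and the injectivity of $\rho_a$ implicitly intervene: the former ensures that $ra^{n-1}$ already lies in $Ra^{n-1}$ so that the compatibility kills $f(ra^n)$, and the latter guarantees that $\Muni nRa$ is genuinely nonzero so that the diagram is not trivially a pushout for degenerate reasons. As a conceptual alternative, one could instead compute the pushout abstractly: since $\eta_{n-1,a}$ is surjective with kernel $Ra^{n-1}$, the pushout of $\rho_a$ along $\eta_{n-1,a}$ is $R/\rho_a(Ra^{n-1})=R/Ra^n=\Muni nRa$, with the induced map from $\Muni{n-1}Ra$ given precisely by $\psi_{R,n-1,n}$; but the elementwise verification above seems the most transparent presentation.
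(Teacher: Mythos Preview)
Your proof is correct and follows the same approach as the paper's: verify commutativity, then build the unique $h$ by setting $h(1+Ra^n)=f(1)$. Your version is in fact more explicit than the paper's, which simply asserts that ``it is easy to check'' that this $h$ works, whereas you spell out the well-definedness computation and the verification of both triangles. One small quibble: your remark that injectivity of $\rho_a$ is needed so that the pushout is not ``trivially'' one is not quite right---the pushout property holds regardless of whether $\Muni nRa$ is zero, and the paper's proof (like yours) never actually uses that standing assumption here.
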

\begin{proof}
Clearly we have $\eta_{n,a}\circ\rho_a=\psi_{R,n-1,n}\circ\eta_{n-1,a}$. 
Let $f:R\to X$ and $g:\Muni {n-1}Ra\to X$ be two homomorphisms of left $R$-modules, for which $g\circ\eta_{n-1,a}=f\circ\rho_a$. It is easy to check that setting $h(1+Ra^n)=f(1)$ defines a left $R$-homomorphism  $h:\Muni {n}Ra\to X$ such that $h\circ \eta_{n,a}=f$ and $h\circ\psi_{R,n-1,n}=g$.
\end{proof}

%\begin{proposition}
%For each $n\geq 2$ the left $R$-module $\Muni nRa$ is isomorphic to the pushout $P$ of the diagram
%\[\xymatrix{R\ar[d]_{\eta_{n-1,a}}\ar[r]^{\rho_a}&R\ar@{.>}[d]^\pi
%%\ar[r]&M_1\ar@{=}[d]\ar[r]&0
%\\
%\Muni {n-1}Ra \ar@{.>}[r]&P.}\]
%\end{proposition}
%\begin{proof}
%The pushout $P$ of the maps $\eta_{n-1,a}$ and $\rho_a$ is isomorphic to the following quotient of the direct product of $\Muni {n-1}Ra$ and $R$:
%\[P\cong\frac{\Muni {n-1}Ra\times R}{\{(\eta_{n-1,a}(r), \rho_a(r)): r\in R\}}=
%\frac{\Muni {n-1}Ra\times R}{\{(r+Ra^{n-1}, ra): r\in R\}}=\frac{\Muni {n-1}Ra\times R}{\langle(1+Ra^{n-1}, a)\rangle}.\]
%Since $\eta_{n-1,a}$ is surjective, so too is  $\pi$. Since
%\[\Ker\pi=\{r\in R: (0,r)\in \langle(1+Ra^{n-1}, a)\rangle\}=Ra^{n},
%\]
%we get $P\cong R/\Ker\pi=R/Ra^{n}=\Muni {n}Ra$.
%\end{proof}

For any $1\leq i<\ell$, using the monomorphism  $\psi_{R,i,\ell}$ allows us to  identify $\Muni {i}Ra$ with its image submodule inside $\Muni {\ell}Ra$.

%Then $\theta_{\ell,i}=\overline{\theta_{\ell,i}}\circ \pi_{i,\ell}$
%where $\pi_{i,\ell}$ is the projection $M_\ell\to M_\ell/(\iota_{i,\ell}(M_i))$ and $\overline{\theta_{\ell,i}}$ is the induced isomorphism.

\begin{proposition}\label{prop:uniserialchar}
Suppose $a\in R$ has these two properties: 

(1)  $\Muni {1}Ra$ is a simple left $R$-module, and

(2)  the equation $a\mathbb{X}=1+Ra^i$ has no solution in $\Muni {i}Ra$ for each $1\leq i< n$.

\noindent
Then the left $R$-module $\Muni {n}Ra$ is uniserial of length $n$. Specifically,  
$\Muni {n}Ra$ has the unique composition series
\[0<\Im\psi_{R,1,n}<\cdots<\Im\psi_{R,n-1,n}<\Muni {n}Ra \ , \]
with all the composition factors isomorphic to $\Muni {1}Ra$.
\end{proposition}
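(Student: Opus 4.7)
The plan is to verify in turn that (i) the displayed chain is a composition series of length $n$, (ii) $\Muni nRa$ has simple socle equal to $\Im\psi_{R,1,n}$, and (iii) these two facts together with induction on $n$ force uniseriality and uniqueness of the composition series. For (i), the third isomorphism theorem applied to the cokernel formula preceding the proposition identifies $\Im\psi_{R,i+1,n}/\Im\psi_{R,i,n}$ with $Ra^{n-i-1}/Ra^{n-i}$; the surjection $R\to Ra^{n-i-1}/Ra^{n-i}$ sending $r\mapsto ra^{n-i-1}+Ra^{n-i}$ has kernel precisely $Ra$, since $a^{n-i-1}$ inherits the non-right-zero-divisor property of $a$. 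Hence every factor is isomorphic to $\Muni 1Ra$, simple by (1), so the chain is a composition series and $\Muni nRa$ has finite length $n$.

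For (ii), let $S\subseteq\Muni nRa$ be simple, pick $0\ne y\in S$, and let $k\in\{1,\dots,n\}$ be the unique index with $y\in\Im\psi_{R,k,n}\setminus\Im\psi_{R,k-1,n}$ (setting $\Im\psi_{R,0,n}:=0$). Writing $y=sa^{n-k}+Ra^n$ forces $s\notin Ra$, so by (1) there exist $t,w\in R$ with $ts=1+wa$, whence
\[
ty=a^{n-k}+wa^{n-k+1}+Ra^n.
\]
A short computation using that $a$ is not a right zero divisor and $1\notin Ra$ shows $ty\notin\Im\psi_{R,k-1,n}$, so $ty\ne 0$ and $Rty=S$. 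Now assume $k\ge 2$; then
\[
a\cdot ty=(1+aw)a^{n-k+1}+Ra^n\in\Im\psi_{R,k-1,n},
\]
and $S\cap\Im\psi_{R,k-1,n}$ must vanish (as a proper submodule of the simple $S$), forcing $a\cdot ty=0$. Right-cancelling $a^{n-k+1}$ yields $1=\delta a^{k-1}-aw\in aR+Ra^{k-1}$ for some $\delta\in R$, contradicting hypothesis (2) since $1\le k-1\le n-1$. Hence $k=1$, and simplicity forces $S=\Im\psi_{R,1,n}$.

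For (iii), I induct on $n$; the base $n=1$ is (1). In the step, any nonzero submodule $N\subseteq\Muni nRa$ contains a simple submodule (finite length by (i)) which by (ii) equals $\Im\psi_{R,1,n}$, so $N\supseteq\Im\psi_{R,1,n}$. The lattice correspondence with $\Muni nRa/\Im\psi_{R,1,n}\cong\Muni{n-1}Ra$, the inductive hypothesis, and the routine check that the preimage of $\Im\psi_{R,j,n-1}$ is $\Im\psi_{R,j+1,n}$, identify the submodules of $\Muni nRa$ with the listed chain. The main obstacle is step (ii): recognizing that left multiplication by $a$ sends the normalized generator $ty$ into the strictly smaller submodule $\Im\psi_{R,k-1,n}$, and converting the resulting ring identity into the shape $1\in aR+Ra^{k-1}$ that hypothesis (2) explicitly forbids.
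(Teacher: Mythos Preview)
Your proof is correct, but it follows a different route from the paper's. The paper argues ``from the top'': it uses that $\Muni nRa/H_{n-1}\cong\Muni 1Ra$ is simple, supposes there is a submodule $L$ neither equal to $\Muni nRa$ nor contained in $H_{n-1}$, and via a direct-sum/indecomposability reduction forces the short exact sequence $0\to H_{n-1}\to\Muni nRa\to\Muni 1Ra\to 0$ to split; the splitting is then diagram-chased (using the pushout description of $\Muni nRa$) into a solution of $a\mathbb X=1+Ra^{n-1}$, contradicting (2). You instead work ``from the bottom'': your step (ii) is a direct element computation showing $\operatorname{Soc}(\Muni nRa)=\Im\psi_{R,1,n}$, invoking hypothesis (2) at the level $i=k-1$ that actually arises, and then step (iii) mods out the socle and applies induction via $\Muni nRa/\Im\psi_{R,1,n}\cong\Muni{n-1}Ra$. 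Your approach avoids the indecomposability argument and the categorical splitting/pushout machinery, at the cost of the explicit normalization $ty=a^{n-k}+wa^{n-k+1}+Ra^n$ and the cancellation step; the paper's approach is more conceptual but leans on more structure. Both are clean; yours makes the role of hypothesis (2) at intermediate levels visible rather than hiding it inside the induction.
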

\begin{proof}
By induction on  $n$.\\
Let $n=1$.  By hypothesis, $\Muni {1}Ra$ is simple and hence is uniserial of length 1, and the only composition series is
\[0<\Muni {1}Ra.\]
Now assume that $n>1$. By induction,  $\Muni {1}Ra \ , \dots , \ \Muni {n-1}Ra$ are uniserial,
\[0<\Im\psi_{R,1,n-1}<\cdots<\Im\psi_{R,n-2,n-1}<\Muni {n-1}Ra\]
% for each $1\leq\ell\leq n-1$
is the only composition series of $\Muni {n-1} Ra$, and all  composition factors are isomorphic to $\Muni {1}Ra$. 
For clarity, in the sequel we denote by $H_i$ the submodule $\Im\psi_{R,i,n}$ of  $\Muni {n}Ra$ for each $1\leq i<n$. 
Since $H_i\cong \Im\psi_{R,i,n-1}$ for each $1\leq i<n-1$ and $H_{n-1}\cong \Muni {n-1} Ra$, then
\[0< H_1< H_2< \cdots< H_{n-1}\]
is the unique composition series of $H_{n-1}$, and all the composition factors are isomorphic to $\Muni {1}Ra$. To conclude the proof, we show that if $0\not=L$ is a submodule of $\Muni {n}Ra$, then either $L=\Muni {n}Ra$, or otherwise $L\leq H_{n-1}$, so that  $L=H_i$ for a suitable $1\leq i\leq n-1$.
%
%
%
%Now
%\[0=H_0< H_1< H_2< \cdots< H_{n-1}< \Muni {n} Ra\]
%is a composition series of $\Muni {n} Ra$ with all the composition factors isomorphic to $\Muni {1}Ra$; let us prove 
%
%
%
%Clearly $0=H_0< H_1< H_2< \cdots< H_{n-1}$ are uniserial submodules of $\Muni {n}Ra$.
%We prove that if $L$ is a submodule of $\Muni {n}Ra$, then either $L=\Muni {n}Ra$ or $L\leq H_{n-1}$; then the uniseriality of $\Muni {n}Ra$ follows. Moreover, since $\Muni {n}Ra/H_{n-1}\cong \Muni {1}Ra$ by a previous remark, we will deduce also that all the composition factors are isomorphic to $\Muni {1}Ra$. 
Assume on the contrary that both $L\not= \Muni {n}Ra$ and $L\not\leq H_{n-1}$. Since  then $H_{n-1}\lvertneqq H_{n-1}+L$, and the quotient $\Muni {n}Ra/H_{n-1}\cong \Muni {1}Ra$ is simple, we have $H_{n-1}+L=\Muni {n}Ra$ and $H_{n-1}$ is not contained in $L$. Therefore 
\[\Muni {n}Ra/(L\cap H_{n-1})=(H_{n-1}+L)/(L\cap H_{n-1})=H_{n-1}/(L\cap H_{n-1})\oplus L/(L\cap H_{n-1}).\]
%
%L/(L\cap \psi_{R,n-1,n}\big(M_{n-1,R}(a)\big))\oplus \psi_{R,n-1,n}\big(M_{n-1,R}(a)\big)/(L\cap \psi_{R,n-1,n}\big(M_{n-1,R}(a)\big)).\]
The left $R$-module $L\cap H_{n-1}$ is properly contained in $H_{n-1}$ and hence equal to some $H_j$ for a suitable $0\leq j< n-1$. Then
\[\Muni {n-j}Ra\cong
\Muni {n}Ra/H_j =
L/H_j \oplus H_{n-1}/H_j \cong L/H_j \oplus \Muni {n-1-j}Ra.\]
Since the direct summands $L/H_j$ and $\Muni {n-1-j}Ra$ are not zero for each $0\leq j<n-1$, then $\Muni {n-j}Ra$ is not indecomposable and hence not uniserial. Therefore, by the induction hypothesis, necessarily $j=0$ and we get $\Muni {n}Ra\cong L\oplus H_{n-1}$ with $L\cong \Muni {1}Ra$.
Consider the  diagram
\[\xymatrix{0\ar[r]
&R\ar[d]_{\eta_{n-1,a}}\ar[rr]^{\rho_a}&&R\ar@{.>}[lld]_\varphi\ar[d]^{\eta_{n,a}}
%\ar[r]&M_1\ar@{=}[d]\ar[r]&0
\\
0\ar[r]&\Muni {n-1}Ra\ar[rr]^{ \  \ \ \ \psi_{R,n-1,n}}&&\Muni {n}Ra\ar[r]&L\cong \Muni {1}Ra\ar[r]&0 \ \ ;}\]
since the last row splits, there exists the dotted arrow $\varphi$ such that $\varphi\circ \rho_a=\eta_{n-1,a}$. Therefore $X = \varphi(1)$ is a solution of the equation $a\mathbb{X}=1+Ra^{n-1}$ in $\Muni {n-1}Ra$, a contradiction to the hypothesis.   
\end{proof}

The maps $\psi_{R,i,j}:\Muni {i}Ra\to \Muni {j}Ra$, $1+Ra^i\mapsto a^{j-i}+Ra^{j}$, $1\leq i\leq j$, define a direct system of monomorphisms $\{\Muni {i}Ra,\psi_{R,i,j}\}_{i\leq j}$. (Here we define $a^0 = 1$.)   

\begin{definition}\label{generalPruferdefinition}
{\rm The {\it $a$-Pr\"ufer module} $\Uni Ra$ is the direct limit
$$\Uni Ra =\varinjlim\{\Muni {i}Ra, \psi_{R,i,j}\}_{i \leq  j}.$$  
We  denote by $\psi_{R,i}: \Muni {i}Ra\to \Uni Ra$, $i\geq 1$,    the induced monomorphisms.}
\end{definition}
\textcolor{black}{Under the assumptions of Proposition~\ref{prop:uniserialchar}
the}, module $\Uni Ra$ is generated by the elements $\alpha_i:=\psi_{R,i}(1+Ra^i)$, $i\geq 1$. Clearly, $\Muni {i}Ra\cong R\alpha_i\leq \Uni Ra$, and 
\[a\alpha_{i}=\begin{cases}
     0 & \text{if }i=1, \\
     \alpha_{i-1} & \text{if }i>1.
\end{cases}\]
\begin{proposition}\label{genUisuniserialandartinian}
\textcolor{black}{If $\Muni nRa$ is uniserial of length $n$ for each $n\geq 1$, then the module $\Uni Ra$ is uniserial  and artinian (and not noetherian).  }
\end{proposition}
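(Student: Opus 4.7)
The plan is to exploit the fact that $\Uni Ra$ is the ascending union of the uniserial submodules $M_n := \psi_{R,n}(\Muni nRa)$ and to classify every submodule of $\Uni Ra$ by looking at how it intersects each $M_n$.

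First I would record the skeletal structure. Since all transition maps $\psi_{R,i,j}$ are monomorphisms and satisfy the compatibility $\psi_{R,j}\circ\psi_{R,i,j}=\psi_{R,i}$, the direct limit is literally a union of embedded copies, giving the ascending chain $0 = M_0 \subset M_1 \subset M_2 \subset \cdots$ with $\Uni Ra = \bigcup_{n\ge 1} M_n$. Under the standing hypothesis $M_n \cong \Muni nRa$ is uniserial of length $n$, so by Proposition~\ref{prop:uniserialchar} its submodules are exactly the images $\psi_{R,n}(\Im\psi_{R,j,n}) = \Im \psi_{R,j} = M_j$ for $0 \le j \le n$. In particular, the submodule lattice of $\Uni Ra$ restricted to any $M_n$ is completely pinned down.

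Next I would take an arbitrary submodule $L \le \Uni Ra$ and write $L = \bigcup_n (L \cap M_n)$. By the previous step each intersection $L \cap M_n$ equals $M_{j(n)}$ for a unique $0 \le j(n) \le n$, and $j(n) \le j(n+1)$. The dichotomy is clean: either the nondecreasing sequence $(j(n))$ is eventually constant at some $j^{*}$, in which case $L = M_{j^{*}}$; or $(j(n))$ is unbounded, in which case $M_m \subseteq L$ for every $m$ and so $L = \Uni Ra$. Consequently the only submodules of $\Uni Ra$ are $0, M_1, M_2, \ldots$ and $\Uni Ra$ itself.

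From this classification the three conclusions drop out simultaneously: the list is totally ordered, so $\Uni Ra$ is uniserial; any descending chain of submodules corresponds to a descending sequence of nonnegative integers (together possibly with a leading $\Uni Ra$) and therefore stabilizes, so $\Uni Ra$ is artinian; and the strict ascending chain $M_1 \subsetneq M_2 \subsetneq \cdots$ witnesses failure of ACC, so $\Uni Ra$ is not noetherian. No step is genuinely difficult; the only item that deserves care is the direct-limit bookkeeping transporting the submodule lattice of $\Muni nRa$ faithfully onto $M_n \subseteq \Uni Ra$, which is immediate from injectivity of the $\psi_{R,i,j}$ and the compatibility of the $\psi$-maps.
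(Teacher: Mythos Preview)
Your argument is correct and follows essentially the same route as the paper: both classify the submodules of $\Uni Ra$ by reducing to the known (finite, totally ordered) submodule lattice of each $M_n\cong\Muni nRa$, obtaining that the proper submodules are precisely the $M_j$. The paper organizes the dichotomy as ``finitely generated vs.\ not finitely generated'' while you phrase it via the sequence $j(n)$ with $L\cap M_n=M_{j(n)}$, but these are the same idea and yield the same conclusion.
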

\begin{proof}
%Since $1+L_K(E)(c-1)^i\mapsto (c-1)^{j-i}+L_K(E)(c-1)^{j}$ defines a monomorphism $\psi_{i,j}:M_i\to M_{j}$ for each $j\geq i$, then for any $i$ the map $\varinjlim_j\psi_{i,j}$ define a monomorphism $M_i\to U$. We have 
%\[\varinjlim_j\psi_{1,j}(M_1)\leq \varinjlim_j\psi_{2,j}(M_2)\leq \cdots\]
%and their union coincides with $U$. We continue to denote by $M_i$ the image of the map $\varinjlim_j\psi_{i,j}$. 
We show that, if $0<N\leq\Uni Ra$, then either $N=R\alpha_j$ for a suitable $j\in \mathbb{N}_{\geq 1}$, or $N=\Uni Ra$.
If $N$ is finitely generated, since $\Uni Ra=\bigcup_i R\alpha_i$ there exists a minimal integer $j\geq 1$ such that $N\leq R\alpha_j<\Uni Ra$: in particular $\Uni Ra$ is not finitely generated and hence not noetherian. Since, by Proposition~\ref{prop:uniserialchar},  $R\alpha_j$ is uniserial and its non-zero submodules are the $R\alpha_\ell$ for $1\leq \ell\leq j$,  we conclude  $N=R\alpha_j$.\\
If $N$ is not finitely generated, write $N=\varinjlim N_{\lambda}$, where the $N_{\lambda}$ are the finitely generated submodules of $N$. For any $\lambda$, by the previous paragraph, there exists $j_{\lambda}$ such that $N_{\lambda}=R\alpha_{j_{\lambda}}$. Since $N\not=N_\lambda$ for any $\lambda$, the sequence $({j_{\lambda}})_\lambda$ is unbounded, so that $N$ contains $R\alpha_\ell$ for every $\ell \in \N$,   and so $N=\Uni Ra$.\\
Hence $\{R\alpha_i  \ | \ i\in\mathbb{N}_{\geq 1}\}$ is  the lattice of the proper submodules of $\Uni Ra$. It is totally ordered and so $\Uni Ra$ is uniserial. 
%Moreover $U$ has essential socle $S_1$, so it is indecomposable. 
Since any $R\alpha_i$ is of finite length, we conclude that $\Uni Ra$ is artinian.  \end{proof}
%Since $\Uni Ra$ is not finitely generated, it cannot be noetherian.   
%\textcolor{black}{
\begin{remark}\label{rem:quotientofU}
Considering the direct limit of the sequences
\[\xymatrix{0\ar[r]& \Muni {i}Ra\ar[r]^{\psi_{R,i,\ell}}& \Muni {\ell}Ra\ar[r]& \Muni {\ell-i}Ra\ar[r]&0}, \quad \ell\in\mathbb N_{\geq i}\]
we get the short exact sequence
\[\xymatrix{0\ar[r]& \Muni {i}Ra\ar[r]^{\psi_{R,i}}& \Uni Ra\ar[r]^{\phi_{R,i}}& \Uni Ra\ar[r]&0.}\]
Therefore all the proper quotients of $\Uni Ra$ are isomorphic to $\Uni Ra$.
\textcolor{black}{Each $\phi_{R,i}$ is a surjective, locally nilpotent endomorphism with kernel of finite length: therefore $\Uni Ra$ is a Pr\"ufer module also in the sense of Ringel \cite{R6}.}
\end{remark}
%}

\begin{example}\label{ex:abeliangroups}
{\rm If $R=\mathbb Z$ and $a=p$ is a prime number, then
$\Muni i{\mathbb Z}p=\mathbb Z/p^i\mathbb Z$, 
and $\Uni {\mathbb Z}p$ is the standard Pr\"ufer abelian group $\mathbb Z(p^\infty)$.
}
\end{example}

%%%%%%Morita
Let $\varepsilon\in R$ be an idempotent such that $R=R\varepsilon R$. Then \cite[Section 22]{AF} the rings $R$ and $S:=\varepsilon R \varepsilon$ are Morita equivalent;  the Morita equivalence is induced by the functors:
\[\Hom_R(R\varepsilon, -):R\lMod \dualita{}{}S\lMod:R\varepsilon\otimes_S-.\]
It is well known (and easy to verify) that, for each left $R$-module $M$, the map
$\varphi\mapsto \varphi(\varepsilon)$ defines a natural isomorphism between the left $S$-modules $\Hom_R(R\varepsilon, M)$ and $\varepsilon M$.

\begin{proposition}\label{prop:Moritaeq}
Let $\varepsilon\in R$ with $\varepsilon^2=\varepsilon$ and $R = R \varepsilon R$. Set $S=\varepsilon R\varepsilon$. Assume $a\in R$ has these two properties:  
\begin{enumerate}
%\item $a$ is not a right zero divisor;
\item $\varepsilon a=a\varepsilon$, and 
\item $a(1-\varepsilon)=u(1-\varepsilon)$ for some invertible central element $u$ of $R$.
\end{enumerate}
%Then $\varepsilon a$ belongs to $S$, and $Sa^n=S\varepsilon a^n=S(\varepsilon a)^n$ is a left $S$-ideal for each $n\in\mathbb N$. 
%
%
%
%
%
%\bigskip
%
%{\bf A/F:  I don't understand this next sentence;  I think that the notation is not correct (should $n$ actually be $i$?)   We need to make this clearer. }
%
%\bigskip
%
%
%
%{\bf A/F:  I think we should shorten this statement to only include the conclusion ``Therefore the Morita equivalence ..."   The other parts of the statement should be part of the proof.}
%
%
%\bigskip
%Moreover, for each $1\leq i<j$ and $n\in\mathbb N$ the positions
%\[\varepsilon r\varepsilon+Sa^n\mapsto \varepsilon r\varepsilon +\varepsilon Ra^n\quad\text{and}\quad 1_S+S(\varepsilon a)^i\mapsto (\varepsilon a)^{j-i}+S(\varepsilon a)^j\]
%define an isomorphism $\nu_n:\Muni {i}R{\varepsilon a}:=S/S(\varepsilon a)^n\to \Hom(R\varepsilon, \Muni {n}Ra)=\varepsilon \Muni {n}Ra$ and a monomorphism $\psi_{S,i,j}$ giving rise to the following commutative diagram of left $S$-modules:
%\[\xymatrix{
%\Muni {i}S{\varepsilon a}\ar[rr]^{\psi_{S,i,j}}\ar[d]_{\cong}^{\nu_i}&& \Muni {j}S{\varepsilon a}\ar[d]_{\cong}^{\nu_j}\\
%\varepsilon \Muni {i}Ra\ar[rr]^{\Hom_R(R\varepsilon, \psi_{R,i,j})}&&\varepsilon \Muni {j}Ra.
%}\]
Then $\varepsilon a=\varepsilon a\varepsilon$ is neither a right zero divisor nor  left invertible  in $S$.
Moreover, the Morita equivalence between the rings $R$ and $S$ sends the direct system of monomorphisms $\{\Muni {i}Ra, \psi_{R,i,j}\}$ to the direct system
 of monomorphisms $\{\Muni {i}S{\varepsilon a}, \psi_{S,i,j}\}$, and sends the Pr\"ufer module $\Uni Ra$ to the Pr\"ufer module $\Uni S{\varepsilon a}$.
 \end{proposition}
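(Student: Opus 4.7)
The plan has three main steps: first, verify the two properties of $\varepsilon a$ in $S$; second, produce a natural $S$-module isomorphism $F(\Muni nRa)\cong \Muni nS{\varepsilon a}$ for the functor $F:=\Hom_R(R\varepsilon,-)\cong\varepsilon(-)$; and third, check compatibility with the connecting monomorphisms $\psi$ and pass to the direct limit. As preliminary bookkeeping, hypothesis (1) gives $(\varepsilon a)^n=\varepsilon a^n=a^n\varepsilon$, while (1) and (2) together yield $(1-\varepsilon)a=u(1-\varepsilon)$ and, inductively, $(1-\varepsilon)a^n=u^n(1-\varepsilon)$, equivalently $(1-\varepsilon)=u^{-n}(1-\varepsilon)a^n\in Ra^n$. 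For $s\in S$ one has $s\varepsilon=s$, hence $sa=s(\varepsilon a)+s(1-\varepsilon)a=s(\varepsilon a)$; so $s(\varepsilon a)=0$ in $S$ forces $sa=0$ in $R$, and by the standing hypothesis $s=0$. If $s(\varepsilon a)=\varepsilon$ were solvable in $S$, then $(s+u^{-1}(1-\varepsilon))a=s(\varepsilon a)+u^{-1}\cdot u(1-\varepsilon)=\varepsilon+(1-\varepsilon)=1$, contradicting that $a$ is not left invertible in $R$.

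The main obstacle is the explicit isomorphism of modules. I would compute $F(\Muni nRa)=\varepsilon(R/Ra^n)=(\varepsilon R+Ra^n)/Ra^n\cong \varepsilon R/(\varepsilon R\cap Ra^n)$, and observe that $\varepsilon R\cap Ra^n=\varepsilon R a^n$, since any element of the intersection has the form $\varepsilon\cdot ra^n$. Then consider the left $S$-module homomorphism $\pi:\varepsilon R\to S$, $x\mapsto x\varepsilon$, which has kernel $\varepsilon R(1-\varepsilon)$ and image $S$. The preliminary step gives the crucial containment $\varepsilon R(1-\varepsilon)\subseteq \varepsilon R a^n$ (via $x(1-\varepsilon)=x\cdot u^{-n}(1-\varepsilon)a^n$), while $\pi(\varepsilon R a^n)=\varepsilon R a^n\varepsilon=\varepsilon R\varepsilon a^n=Sa^n=S(\varepsilon a)^n$. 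Hence $\pi$ descends to an $S$-isomorphism $\varepsilon R/\varepsilon R a^n\cong S/S(\varepsilon a)^n=\Muni nS{\varepsilon a}$ sending the canonical generator $\overline{1_R}$ to $\overline{1_S}$. Both hypotheses enter crucially here: (1) is needed for $\pi(\varepsilon Ra^n)=Sa^n$, and (2) is needed for $\varepsilon R(1-\varepsilon)\subseteq\varepsilon Ra^n$.

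Finally, since $\psi_{R,i,\ell}$ is induced by right multiplication by $a^{\ell-i}$, tracking the generator of $\Muni iS{\varepsilon a}$ through $F$ and the isomorphism above yields $\varepsilon a^{\ell-i}\varepsilon + S(\varepsilon a)^\ell = (\varepsilon a)^{\ell-i}+S(\varepsilon a)^\ell$, which is precisely the image of $\overline{1_S}$ under $\psi_{S,i,\ell}$. The equivalence $F$ preserves direct limits, so
\[
F(\Uni Ra)=\varinjlim F(\Muni iRa)\cong \varinjlim \Muni iS{\varepsilon a}=\Uni S{\varepsilon a},
\]
completing the proof.
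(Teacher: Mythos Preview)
Your argument is correct and follows essentially the same route as the paper's proof. The only difference is presentational: where the paper packages the computation into a $3\times 3$ diagram with exact rows and columns (showing the bottom cokernel $Q$ vanishes because $\varepsilon R a^n(1-\varepsilon)=\varepsilon R(1-\varepsilon)$), you carry out the same computation directly via the surjection $\pi:\varepsilon R\to S$, $x\mapsto x\varepsilon$, using the identical key fact $(1-\varepsilon)=u^{-n}(1-\varepsilon)a^n\in Ra^n$ to get $\ker\pi=\varepsilon R(1-\varepsilon)\subseteq\varepsilon Ra^n$; the verification of compatibility with the $\psi$ maps by tracking the cyclic generator is likewise the same in both.
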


\begin{proof}
By (1), $\varepsilon a=\varepsilon^2 a=\varepsilon a\varepsilon$
belongs to $S$.  If $\varepsilon a$ were a right zero divisor in $S$ there would exist  $r\in R$ such that $0=\varepsilon a\varepsilon r\varepsilon=a\big(\varepsilon r\varepsilon
\big)$, contradicting the standing assumption that $a$ is not a right zero divisor in $R$.   If $\varepsilon a$ were left invertible in $S$, there would exist $r_1\in R$ such that 
 $\varepsilon r_1\varepsilon a=\varepsilon$; then by (1) and (2)
\[1=\varepsilon +(1-\varepsilon)=\varepsilon r_1\varepsilon a+u^{-1}u(1-\varepsilon)=\varepsilon r_1\varepsilon a+u^{-1}a(1-\varepsilon)=\big(\varepsilon r_1\varepsilon+u^{-1}(1-\varepsilon)\big)a , \]
contradicting the standing assumption  that $a$ is not left invertible in $R$. \\
By (1), $Sa^n=S\varepsilon a^n=S(\varepsilon a)^n$ is a left $S$-ideal for each $n\in\mathbb N$.
We have the following commutative diagram with exact rows
\[\xymatrix{0\ar[r]& R a^n\varepsilon=R \varepsilon a^n\ar@{^(->}[d]\ar[r]&R\varepsilon\ar@{^(->}[d]\\
0\ar[r]&R a^n\ar[r]&R\ar[r]& \Muni {n}Ra\ar[r]& 0.
}\]
Applying the functor $\Hom_R(R\varepsilon, -)$ 
we get the following commutative diagram of left $S$-modules with exact rows and columns:
\[\xymatrix{
0\ar[r]&\varepsilon Ra^n\varepsilon=\varepsilon R\varepsilon a^n=Sa^n=S(\varepsilon a)^n\ar@{^(->}[d]\ar[r]&S\ar[r]\ar@{^(->}[d]&\Muni {n}S{\varepsilon a}\ar[d]^{\nu_n}\ar[r]&0\\
0\ar[r]&\varepsilon R a^n\ar@{->>}[d]\ar[r]&\varepsilon R\ar@{->>}[d]\ar[r]& \varepsilon \Muni {n}Ra\ar@{->>}[d]\ar[r]& 0\\
0\ar[r]&\varepsilon R a^n(1-\varepsilon)\ar[r]^\xi&\varepsilon R(1-\varepsilon)\ar[r]& Q\ar[r]& 0
}\]
where $\nu_n$ sends $\varepsilon r\varepsilon+
S(\varepsilon a)^n$ to $\varepsilon r\varepsilon+
R a^n$.
By (2)  $Ra^n(1-\varepsilon)=Ru^n(1-\varepsilon)=R(1-\varepsilon)$; therefore the map $\xi$ is surjective and hence $Q=0$. Therefore $\nu_n$ is an isomorphism and $\varepsilon r\varepsilon+
 R a^n=\varepsilon r+
 R a^n$: indeed
 \[\varepsilon r-\varepsilon r\varepsilon=\varepsilon r(1-\varepsilon)\in R u^n(1-\varepsilon)=R a^n(1-\varepsilon)= R (1-\varepsilon)a^n\subseteq R a^n.\]
We now show that for any $i\leq j$ the following diagram commutes:
\[\xymatrix{
\Muni {i}S{\varepsilon a}\ar[rr]^{\psi_{S,i,j}}\ar[d]_{\cong}^{\nu_i}&& \Muni {j}S{\varepsilon a}\ar[d]_{\cong}^{\nu_j}\\
\varepsilon \Muni {i}Ra\ar[rr]^{\Hom_R(R\varepsilon, \psi_{R,i,j})}&&\varepsilon \Muni {j}Ra.
}\]
We have:
\begin{align*}
\hspace{-1in} \Hom_R(R\varepsilon, \psi_{R,i,j})\big(\nu_i(\varepsilon r\varepsilon+S(\varepsilon a)^i)\big) &=\Hom_R(R\varepsilon, \psi_{R,i,j})(\varepsilon r\varepsilon+Ra^i)\\
&=\varepsilon r\varepsilon a^{j-i}+Ra^j \\
%&=\varepsilon r a^{j-i}\varepsilon+Ra^j \\
&=\varepsilon r \varepsilon (\varepsilon a)^{j-i}+Ra^j \\
&=\nu_j\big(\varepsilon r \varepsilon (\varepsilon a)^{j-i}+S(\varepsilon a)^j \big)\\
&=\nu_j\big(\psi_{S,i,j}(\varepsilon r \varepsilon+S (\varepsilon a)^i\big) .
%\\
%
%
%
%
%
%
%
%&=
%\varepsilon r a^{j-i}\varepsilon+Ra^j \\
%&= \nu_j(\varepsilon ra^{j-i}\varepsilon+Sa^j) \\
%& =\nu_j\big((\varepsilon a)^{j-i}+S(\varepsilon a)^j\big)\\
%& =\nu_j\big(\psi_{S,i,j}(1_S+S(\varepsilon a)^i\big) \\
%&=\nu_j\big(\psi_{S,i,j}(1_S+Sa^i\big)).
\end{align*}
%
%
%
%
%
%By (3)  $Ra^n(1-\varepsilon)=Ru^n(1-\varepsilon)=R(1-\varepsilon)$; therefore the map $\xi$ is surjective and hence $\Hom_R(R\varepsilon,\Muni {n}Ra)\cong\varepsilon \Muni {n}Ra=\Muni {n}S{\varepsilon a}$.   More precisely, for each $r\in R$ one has 
%\[\varepsilon r-\varepsilon r\varepsilon=\varepsilon r(1-\varepsilon)\in \varepsilon R u^n(1-\varepsilon)=\varepsilon R a^n(1-\varepsilon)= \varepsilon R (1-\varepsilon)a^n\subseteq \varepsilon R a^n,\]
%and hence $\varepsilon r\varepsilon+Sa^n\mapsto \varepsilon r+\varepsilon Ra^n$ is an isomorphism.
%Finally we have:
%\begin{align*}
%\hspace{-1in} \Hom_R(R\varepsilon, \psi_{R,i,j})\big(\nu_i(1_S+Sa^i)\big) &=\Hom_R(R\varepsilon, \psi_{R,i,j})(\varepsilon+\varepsilon Ra^i)\\
%&=\varepsilon a^{j-i}+\varepsilon Ra^j \\
%&=
%\varepsilon a^{j-i}\varepsilon+\varepsilon Ra^j \\
%&= \nu_j(\varepsilon a^{j-i}\varepsilon+Sa^j) \\
%& =\nu_j\big((\varepsilon a)^{j-i}+S(\varepsilon a)^j\big)\\
%& =\nu_j\big(\psi_{S,i,j}(1_S+S(\varepsilon a)^i\big) \\
%&=\nu_j\big(\psi_{S,i,j}(1_S+Sa^i\big)).
%\end{align*}
%\[\Hom_R(R\varepsilon, \psi_{R,i,j})\big(\nu_i(1_S+Sa^i)\big)=\Hom_R(R\varepsilon, \psi_{R,i,j})(\varepsilon+\varepsilon Ra^i)=\varepsilon a^{j-i}+\varepsilon Ra^j=
%\varepsilon a^{j-i}\varepsilon+\varepsilon Ra^j=\]
%\[=\nu_j(\varepsilon a^{j-i}\varepsilon+Sa^j)=\nu_j\big((\varepsilon a)^{j-i}+S(\varepsilon a)^j\big)=\nu_j\big(\psi_{S,i,j}(1_S+S(\varepsilon a)^i\big)
%=\nu_j\big(\psi_{S,i,j}(1_S+Sa^i\big)).
%\]
Therefore the Morita equivalence between $R$ and $S$ sends the direct system of monomorphisms $\{\Muni {i}Ra, \psi_{R,i,j}\}$ to the direct system
 of monomorphisms $\{\Muni {i}S{\varepsilon a}, \psi_{S,i,j}\}$. Since Morita equivalences commute with direct limits, we get also that the Pr\"ufer module $\Uni Ra$ is sent to the Pr\"ufer module $\Uni S{\varepsilon a}$.
\end{proof}

%Since $(1-\varepsilon)a^n=a^n-\varepsilon a^n=a^n-u^na^n\varepsilon=a^n(1-u^n\varepsilon)$ one has
%$\varepsilon R (1-\varepsilon)a^

%If $u$ and $u^{-1}$ belong to $S$, then
%\[S(\varepsilon a^n)=S(\varepsilon u^na^n\varepsilon)=Su^n(\varepsilon a^n\varepsilon)=Sa^n\]

 %Since for any $s\in S\subseteq R$ it is $s(\varepsilon a^n)=(s\varepsilon)a^n=sa^n$, the element $a^n\in R$ generates a cyclic left $S$-module isomorphic to the left ideal $S(\varepsilon a^n)$ of $S$. In such a case we have that 
%\[\varepsilon Ra^n\varepsilon=\varepsilon Ra^{n-1}u\varepsilon a=\cdots=\varepsilon R u\varepsilon a^n=\]
%$\Hom_R(R\varepsilon, Ra^n)\cong \varepsilon Ra^n$ contains
% and $a(1-\varepsilon)=u'(1-\varepsilon)$ for suitable invertible elements $u, u'\in R$.

%%%%%%%%%

%%%%%%%%%%%%%%%%%%%%%%%%%%%%%%%%%%%%
%
%
%
%%      \section{A division algorithm in $L_K(E)$}\label{divisionalgorithmsection}
%
%
%
%%%%%%%%%%%%%%%%%%%%%%%%%%%%%%%%%%%%%

\section{Chen simple modules over Leavitt path algebras}\label{section:Chen}

In this section we give  a (minimalist)  review of the germane notation, first about directed graphs, then about Leavitt path algebras, and finally about Chen simple modules. 

    A (directed) graph $E = (E^0, E^1, s,r)$ consists of a {\it vertex set} $E^0$, an {\it edge set} $E^1$, and {\it source} and {\it range} functions $s, r: E^1 \rightarrow E^0$.  For $v\in E^0$, the set of edges $\{ e\in E^1 \ | \ s(e)=v\}$ is denoted $s^{-1}(v)$. $E$ is called {\it finite} in case both $E^0$ and $E^1$ are finite sets.  
%  $E$ is called {\it row-finite} in case $s^{-1}(v)$ is finite for every $v\in E^0$.   
%Unless specifically indicated otherwise, throughout this article we assume that $E$ is a finite graph, i.e., that the sets $E^0$ and $E^1$ are finite.    
A {\it path} $\alpha$ in $E$ is a sequence $e_1 e_2 \cdots e_n$ of edges in $E$ for which $r(e_i) = s(e_{i+1})$ for all $1 \leq i \leq n-1$.  
%We denote by $\alpha^0$ the set $\{s(e_1), \dots, s(e_n)\}$.    
We say that such $\alpha$ has {\it length} $n$, and we write $s(\alpha) = s(e_1)$ and $r(\alpha) = r(e_n)$.  We view each vertex $v \in E^0$ as a path of length $0$, and denote $v = s(v) = r(v)$.  We denote the set of paths in $E$ by ${\rm Path}(E)$.   We say a vertex $v$ {\it connects to} a vertex $w$ in case $v=w$, or there exists a path $\alpha$ in $E$ for which $s(\alpha) = v$ and $r(\alpha) = w$.  
A  path $\textcolor{black}{\gamma} = e_1 e_2 \cdots e_n$ ($n \geq 1$)  in $E$   is {\it closed}  in case $r(e_n) = s(e_1)$.

Unfortunately, the phrase ``simple closed path" has come to be defined as two distinct concepts in the literature.    We choose in the current article to follow what now seems to be the more common usage.  Specifically, for a closed path   $\textcolor{black}{\gamma} = e_1 e_2 \cdots e_n$, we call $\textcolor{black}{\gamma}$    {\it simple} in case $s(e_i) \neq s(e_1)$ for all $1 < i  \leq n$, and we call $\textcolor{black}{\gamma}$ {\it basic} in case \textcolor{black}{$\gamma \neq \delta^k$ for any closed path $\delta$} and positive integer $k$.   (In our previous article \cite{AMT1} we  followed Chen's usage of this phrase given in \cite{Chen}; in those two places, ``simple closed path" means what we are now calling a ``basic closed path".)

Some additional properties of closed paths will play a role in the sequel.  If $\gamma  =e_1e_2\cdots e_n$ is a closed path in $E$, then a path $\gamma^{\prime}$ of the form $e_i e_{i+1} \cdots e_n e_1 \cdots e_{i-1}$ (for any $1\leq i \leq n$ is called a {\it cyclic shift} of $\gamma$.    The closed path $\textcolor{black}{\gamma} =e_1e_2\cdots e_n$ in $E$ is called a  \emph{cycle} if $s(e_i)\not=s(e_j)$ for each $i\not=j$;  a  \emph{loop} if $n=1$;
 a  \emph{maximal cycle} if $\textcolor{black}{\gamma}$ is a cycle, and there are no cycles in $E$ other than cyclic shifts of $\gamma$ 
 % from $\textcolor{black}{\gamma}$ 
 which connect to $s(\textcolor{black}{\gamma})=s(e_1)$; and a   \emph{source cycle} (resp., \emph{source loop}) if $\textcolor{black}{\gamma}$ is a cycle (resp., loop), and  there are no edges $e\not=e_i$ in $E$ such that $r(e)=r(e_i)$, for $1\leq i\leq n$.    Less formally, a source cycle is a cycle for which no vertices in the graph connect to the cycle, other than those vertices which are already in the cycle.

%  In deference to  \cite{Chen} (but not standard in the literature), a closed path $\sigma$ is called {\it simple} in case $\sigma \neq \beta^m$ for any closed path $\beta$ and integer $m\geq 2.$     
%  A {\it sink} in $E$ is a vertex $w \in E^0$ for which the set $s^{-1}(w)$ is empty, while an {\it infinite emitter} in $E$ is a vertex $u \in E^0$ for which the set $s^{-1}(u)$ is infinite.  

\smallskip

For any field $K$ and  graph $E$ the Leavitt path algebra $L_K(E)$ has been the focus of sustained investigation since 2004.  We give here a basic description of $L_K(E)$; for additional information, see  \cite{TheBook}.       Let $K$ be a field, and let $E = (E^0, E^1, s,r)$ be a directed  graph with vertex set $E^0$ and edge set $E^1$.   The {\em Leavitt path $K$-algebra} $L_K(E)$ {\em of $E$ with coefficients in $K$} is  the $K$-algebra generated by a set $\{v\mid v\in E^0\}$, together with a set of symbols $\{e,e^*\mid e\in E^1\}$, which satisfy the following relations:

(V)   \ \ \  \ $vu = \delta_{v,u}v$ for all $v,u\in E^0$, \  
%(i.e., $\{v\mid v\in E^0\}$ is a set of orthogonal idempotents),

  (E1) \ \ \ $s(e)e=er(e)=e$ for all $e\in E^1$,

(E2) \ \ \ $r(e)e^*=e^*s(e)=e^*$ for all $e\in E^1$,

 (CK1) \ $e^*e'=\delta _{e,e'}r(e)$ for all $e,e'\in E^1$, and

(CK2)ส\ \ $v=\sum _{\{ e\in E^1\mid s(e)=v \}}ee^*$ for every   $v\in E^0$ for which $0 < |s^{-1}(v)| < \infty$.

%An alternate description of $L_K(E)$ may be given as follows.  For any graph $E$ let $\widehat{E}$ denote the ``double graph" of $E$, gotten by adding to $E$ an edge $e^*$ in a reversed direction for each edge $e\in E^1$.   Then $L_K(E)$ is the usual path algebra $K\widehat{E}$, modulo the ideal generated by the relations (CK1) and (CK2).    

\smallskip

It is easy to show that $L_K(E)$ is unital if and only if $|E^0|$ is finite; in this case, $1_{L_K(E)} = \sum_{v\in E^0}v$.    Every element of $L_K(E)$ may be written as $\sum_{i=1}^n k_i \alpha_i \beta_i^*$, where $k_i$ is a nonzero element of $K$, and each of the $\alpha_i$ and $\beta_i$ are paths in $E$.  If $\alpha \in {\rm Path}(E)$ then we may view $\alpha \in L_K(E)$, and will often refer to such $\alpha$ as a {\it real path} in $L_K(E)$; analogously, for $\beta = e_1 e_2 \cdots e_n \in {\rm Path}(E)$ we often refer to the element $\beta^* = e_n^* \cdots e_2^* e_1^*$ of $L_K(E)$ as a {\it ghost path} in $L_K(E)$.     

%\bigskip
%
%{\bf Do we need this?}
%The  map $KE \rightarrow L_K(E)$ given by the $K$-linear extension of $\alpha \mapsto \alpha$ (for $\alpha \in {\rm Path}(E)$) 
%$\sum_{i=1}^n k_i \alpha_i \mapsto \sum_{i=1}^n k_i \alpha_i $, 
% is an injection of $K$-algebras by \cite[Corollary 1.5.12]{TheBook}.   
 
% \textcolor{red}{No}

%\medskip

 {\it We assume throughout the article that $E$ is finite.}   In particular, we assume that $L_K(E)$ is unital.  The multiplicative identity of a ring $R$  will be denoted by $1_{R}$, or more simply by $1$ if the context is clear.

The ideas presented in the following few paragraphs come from \cite{Chen};  however, some of the  notation we use here differs from that used in \cite{Chen}, in order to make our presentation more notationally consistent with the general body of literature regarding Leavitt path algebras.

%\bigskip

%{\bf A/F:   We need to establish some consistent notation.   Chen uses 'cyclic' for an infinite path in a way that is not consistent with the use of the word 'cycle' in usual terminology. }

%\bigskip
  
%A {\it cycle} in $E$ is a path $c = e_1 e_2 \cdots e_n$ in $E$ for which $s(e_1) = r(e_n)$, and for which $s(e_i) \neq s(e_j)$ for $1\leq i\neq j \leq n$.   
%(We note that  the terminology used throughout the literature in this regard is not standardized.)   

Let $p$ be an {\it infinite path in} $E$; that is, $p$ is a sequence $ e_1e_2e_3\cdots$, where $e_i \in E^1$ for all $i\in \N$, and for which $s(e_{i+1}) = r(e_i)$ for all $i\in \N$.   We emphasize that while the phrase {\it infinite path in} $E$ might seem to suggest otherwise, an infinite path in $E$ is not an element of ${\rm Path}(E)$, nor may it be interpreted  as   an element of the path algebra $KE$ nor  of the Leavitt path algebra $L_K(E)$.  (Such a path is sometimes called a {\it left}-infinite path in the literature.)  We denote the set of infinite paths in $E$ by $E^\infty$.   

 Let $c$ be a closed path in $E$.  Then the path $c c c \cdots$ is an infinite path in $E$, which we denote by $c^\infty$; we call such a 
 %.   We call an infinite path of the form $c^\infty$ a 
 {\it cyclic infinite} path.   For $c$ a closed path in $E$ let $d$ be the 
 %simple
 basic closed path in $E$ for which $c = d^n$.   Then $c^\infty = d^\infty$ as elements of $E^\infty$.

For $p = e_1e_2e_3\cdots \in E^\infty$  
 %and $n\in \N$ we denote by $\tau_{\leq n}(p)$, or often more efficiently by $p_n$, the (finite) path $e_1e_2\cdots e_n$, while 
 we denote by $\tau_{>n}(p)$ the infinite path $e_{n+1}e_{n+2}\cdots$.   
 %We note that $\tau_{\leq n}(p)$ is an element of ${\rm Path}(E)$ (and thus may be viewed as an element of $L_K(E)$), and that $p$ is the concatenation $p = \tau_{\leq n}(p) \cdot \tau_{>n}(p)$.  
If $p$ and $q$ are infinite paths in $E$, we say that $p$ and $q$ are {\it tail equivalent} (written $p \sim q$) in case there exist integers $m,n$ for which $\tau_{>m}(p) = \tau_{>n}(q)$; intuitively, $p \sim q$ in case $p$ and $q$ eventually become the same infinite path.   Clearly $\sim$ is an equivalence relation on $E^\infty$, and we let $[p]$ denote the $\sim$ equivalence class of the infinite path $p$.

The infinite path $p$ is called {\it rational} in 
%case there exists $n\in \N$ for which $p \sim \tau_{>n}(p)$; intuitively, in case $p$ is tail equivalent a proper tail of itself.   
case $p \sim c^\infty$ for some closed path $c$.   By a previous observation, we may assume without loss of generality that such $c$ is a 
%simple 
basic closed path.      In particular, for  any  closed path $c$ we have that $c^\infty$ is rational.   

% If $p \in E^\infty$ is not rational we say $p$ is {\it irrational}.   

%\begin{example}\label{R2Example}
%{\rm 
%Let $R_2$ denote the ``rose with two petals" graph
%$$\xymatrix{\bullet^v \ar@(ul,dl)_e \ar@(ur,dr)^f} \ .$$
%It is not hard to show that $p$ is rational precisely when $p \sim \sigma^\infty$, where $\sigma$ is a simple closed path.  
%(See, e.g., \cite[p. 4]{C}.)   
%\noindent
%Then $q = efeffefffeffffe\cdots$ is an irrational infinite path in $R_2^\infty$.  Indeed, it is easy to show that there are uncountably many distinct irrational infinite paths in $R_2^\infty$.   We note additionally that there are infinitely many simple closed paths in ${\rm Path}(R_2)$, for instance, any path of the form $ef^i$ for $i\in \N_{\geq 1}$.  
%}
%\end{example}

%{\bf Do we want to use the $\hat{\rho}$ notation, or just use $\rho$?} 
%\textcolor{red}{We can use just $\rho$}
 Let $M$ be a left $L_K(E)$-module.  For each $m\in M$ we define the $L_K(E)$-homomorphism ${\rho}_m: L_K(E) \rightarrow M$, given by ${\rho}_m(r) = rm$.   The restriction of the right-multiplication map ${\rho}_m$ may also be viewed as an $L_K(E)$-homomorphism from any left ideal $I$ of $L_K(E)$ into $M$.  
%When $I = L_K(E)v$ for some vertex $v$ of $E$, we will denote $\hat{\rho}_m$ simply by $\rho_m$. 

\medskip
 
Following \cite{Chen}, for any infinite path $p$ in $E$  we construct a simple left $L_K(E)$-module $V_{[p]}$, as follows.   

\begin{definition} \label{Chendef}
{\rm 
Let $p$ be an infinite path in the graph $E$, and let $K$ be any field.  Let $V_{[p]}$ denote the $K$-vector space having basis $[p]$, that is, having basis consisting of distinct elements of $E^\infty$ which are tail-equivalent to $p$.    For any $v\in E^0$, $e\in E^1$, and $q = f_1f_2f_3\cdots  \in [p]$, define 
$$
 v \cdot q =
 \begin{cases}
q  &\text{if }  v = s(f_1) \\
0 &\text{otherwise,} 
\end{cases}
    \ \ \ 
 e\cdot q=
  \begin{cases}
eq  &\text{if }  r(e) = s(f_1) \\
0 &\text{otherwise,} 
\end{cases}
\ \ \ \ \mbox{and} \qquad 
e^* \cdot q = 
 \begin{cases}
\tau_{>1}(q)  &\text{if }  e = f_1 \\
0 &\text{otherwise.} 
\end{cases}
$$
\noindent
Then  the $K$-linear extension to all of $V_{[p]}$ of this action endows $V_{[p]}$ with the structure of a left $L_K(E)$-module.   
}
\end{definition}

\begin{theorem} \label{Chentheoremforsimples}  (\cite[Theorem 3.3]{Chen}).  Let $E$ be any directed graph and $K$ any field.  Let $p\in E^\infty$.   Then the left $L_K(E)$-module $V_{[p]}$ described in  Definition \ref{Chendef}
%\ref{DefintionofChensimpleforinfinitepaths}
 is simple.   Moreover, if $p,q \in E^\infty$, then $V_{[p]} \cong V_{[q]}$ as left $L_K(E)$-modules if and only if $p \sim q$, which happens precisely when $V_{[p]} = V_{[q]}$.
\end{theorem}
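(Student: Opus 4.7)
Three things must be checked: that the prescribed action makes $V_{[p]}$ into a left $L_K(E)$-module, that this module is simple, and the iso-characterization. For the first, one verifies the defining relations (V), (E1), (E2), (CK1), (CK2) of $L_K(E)$ on each basis element $q = f_1 f_2 \cdots \in [p]$ directly from Definition~\ref{Chendef}; the only relation that merits attention is (CK2), which holds because $\sum_{e \in s^{-1}(s(f_1))} ee^* \cdot q = f_1 f_1^* \cdot q = q$ (all other summands vanish at the $e^*$-step). I would dispatch this in a few lines.

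For simplicity, take $0 \neq x = \sum_{i=1}^n k_i q_i$ with the $q_i \in [p]$ pairwise distinct and $k_i \in K^\times$. For each $i \geq 2$, let $N_i$ be the least index at which the $N_i$-th edge of $q_i$ differs from that of $q_1$. Set $N := \max_{i \geq 2} N_i$ and let $\alpha$ be the initial segment of $q_1$ of length $N$. A direct computation from Definition~\ref{Chendef} gives $\alpha^* \cdot q_1 = \tau_{>N}(q_1)$, while $\alpha^* \cdot q_i = 0$ for every $i \geq 2$, because $\alpha$ and $q_i$ already disagree at position $N_i \leq N$. Therefore $\alpha^* \cdot x = k_1 \tau_{>N}(q_1) \neq 0$, which is a single basis vector up to scalar. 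To conclude, I exhibit an element of $L_K(E)$ sending $\tau_{>N}(q_1)$ to an arbitrary basis vector $q' \in [p]$: tail-equivalence produces $m, \ell$ with $\tau_{>m}(\tau_{>N}(q_1)) = \tau_{>\ell}(q')$, and then $\nu \mu^* \cdot \tau_{>N}(q_1) = q'$, where $\mu$ is the length-$m$ initial segment of $\tau_{>N}(q_1)$ and $\nu$ the length-$\ell$ initial segment of $q'$. Hence $L_K(E) \cdot x = V_{[p]}$.

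For the iso-characterization, $p \sim q$ implies $[p] = [q]$, and Definition~\ref{Chendef} then shows that $V_{[p]}$ and $V_{[q]}$ are literally the same $K$-vector space with identical $L_K(E)$-action, so $V_{[p]} = V_{[q]}$. For the converse, suppose $\phi : V_{[p]} \to V_{[q]}$ is an isomorphism, and write $\phi(p) = \sum_{j=1}^m c_j r_j$ with distinct $r_j \in [q]$ and $c_j \in K^\times$. The crucial identity is $\alpha \alpha^* \cdot p = p$ for every initial segment $\alpha$ of $p$; applying this under $\phi$ yields $\sum_j c_j (\alpha \alpha^* \cdot r_j) = \sum_j c_j r_j$. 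A direct check shows $\alpha \alpha^* \cdot r_j$ equals $r_j$ or $0$ according as $\alpha$ is, or is not, an initial segment of $r_j$; linear independence of the $r_j$'s then forces $\alpha$ to be an initial segment of every $r_j$. Letting $\alpha$ range over all initial segments of $p$ forces $r_j = p$ for each $j$ (and hence $m = 1$); thus $p \in [q]$, i.e., $p \sim q$. The only genuine obstacle is bookkeeping---keeping infinite paths (basis vectors) distinct from finite paths (elements of $L_K(E)$), and using ghost paths $\alpha^*$ in their two dual roles: isolating a single basis vector inside a finite sum, and translating between tail-equivalent paths.
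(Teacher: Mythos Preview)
Your argument is correct in all three parts: the verification of the relations, the simplicity proof via ghost-path truncation, and the isomorphism characterization via the idempotents $\alpha\alpha^*$. The paper, however, does not supply its own proof of this statement --- it is simply quoted from \cite[Theorem~3.3]{Chen} without argument --- so there is nothing in the paper to compare against. What you have written is essentially the standard proof (and the one Chen gives): reduce an arbitrary nonzero element to a single basis vector by applying a suitable $\alpha^*$, then move between basis vectors via $\nu\mu^*$ using tail-equivalence; for the converse of the isomorphism statement, exploit that $\alpha\alpha^*$ acts as a projector onto the span of those basis paths beginning with $\alpha$. One tiny cosmetic point: when $n=1$ your definition of $N$ is a maximum over an empty set, but the argument is of course trivial in that case and you proceed directly to the transitivity step.
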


We will refer to a module of the form $V_{[p]}$ as presented in Theorem \ref{Chentheoremforsimples} as a {\it Chen simple module}.

Because  $V_{[c^\infty]}=V_{[{(c^2)}^\infty]}$ for any closed path $c$ in $E$,  when analyzing  Chen simple modules $V_{[c^\infty]}$ we can without loss of generality assume that $c$ is a   basic closed path. Observe that if $c=e_1\cdots e_n$ and $d$ are two basic closed paths, then $[c^\infty]=[d^\infty]$ if and only if $d=e_ie_{i+1}\cdots e_ne_1\cdots e_{i-1}$ for a suitable $1\leq i\leq n$.

\begin{example}
Let $E=R_2$ be the rose with two petals:
\[\xymatrix{\bullet\ar@(ul,dl)[]|{e_1}\ar@(ur,dr)[]|{e_2}}\]
%{\bf We need to look at closed simple paths here, rather than irrational paths.} 
\textcolor{black}{Then, for example, the infinite paths $p=e_1e_2^2e_1e_2^2e_1e_2^2\cdots$ and $q=e_1e_2e_1e_2e_1e_2\cdots$ are rational paths which are not tail equivalent.}
\end{example}

For the sake of completeness and reader convenience, we state and briefly sketch proofs of  the following two lemmas.  These  include, in the case of a finite graph, some slight generalization of the results obtained in \cite[Lemma 2.5, Proposition 2.6, Lemma 2.7, Theorem 2.8]{AMT1}.

%For sake of completeness, we prove here the following two lemmas which barely generalise \cite[Lemma~2.5, Proposition 2.6, Lemma~2.7, Theorem 2.8]{AMT1}. 
\begin{lemma}\label{lemma:riassunto1}
Let $E$ be a finite graph.

(1)  Let  $c$ be a closed path in $E$, and $r\in L_K(E)$. Then $r(c-1) =0$ in $L_K(E)$ if and only if $r=0$.

(2)  Let  $c$ be a basic closed path in $E$.   Let $\alpha, \beta \in {\rm Path}(E)$ for which $0 \neq \alpha c^\infty = \beta c^\infty$ in $V_{[c^\infty]}$.   Suppose also that $\alpha \neq \gamma c^N$ and $\beta \neq \delta c^{N'}$ for any $\gamma, \delta \in {\rm Path}(E)$ and positive integers $N,N'$.     Then $\alpha = \beta$.  

(3)  Let $c$ be a basic closed path in $E$.  Given  edges $f_1,..., f_\ell, g_1,..., g_m$ in $E$, if $0\not=f_1\cdots f_\ell c^\infty=g_1\cdots g_m c^\infty$ in $V_{[c^\infty]}$, then $f_1\cdots f_\ell-g_1\cdots g_m\in L_K(E)(c-1)$.
\end{lemma}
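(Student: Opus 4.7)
My plan is to handle the three parts in order, since~(3) will reduce to~(2), and both~(1) and~(2) are essentially combinatorial after the right setup.

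For part~(1), I would invoke the standard $\mathbb Z$-grading on $L_K(E)$ with $\deg v=0$, $\deg e=1$, $\deg e^*=-1$. Then $c$ is homogeneous of degree $n:=|c|\geq 1$, and $r(c-1)=0$ reads $rc=r$. Writing $r=\sum_{k\in S}r_k$ as a finite sum of nonzero homogeneous components and comparing degrees yields the recursion $r_j=r_{j-n}c$ for every $j\in\mathbb Z$. Iterating gives $r_j=r_{j-kn}c^k$, and for $k$ large enough $j-kn<\min S$, so $r_{j-kn}=0$ and hence $r_j=0$. Thus $r=0$.

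For part~(2), the Chen action on $c^\infty$ is literal concatenation: $\alpha c^\infty$ is $\alpha$ followed by $c^\infty$, and is nonzero exactly when $r(\alpha)=s(c)$. So $\alpha c^\infty=\beta c^\infty\neq 0$ asserts equality of two infinite paths as sequences of edges. Assuming without loss of generality $|\alpha|\leq|\beta|$, I write $\beta=\alpha\gamma$ and read off $\gamma c^\infty=c^\infty$. Decomposing $|\gamma|=qn+s$ with $0\leq s<n$ forces $\gamma=c^q\gamma'$ with $\gamma'$ the prefix of $c$ of length $s$, and hence $\tau_{>s}(c^\infty)=c^\infty$, i.e.\ $s$ is a period of the infinite word $c^\infty$. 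Since $n$ is also a period, so is $d:=\gcd(n,s)$; if $s\neq 0$ then $d<n$ and the first $d$ edges of $c$ form a closed path $\delta$ with $c=\delta^{n/d}$, contradicting basicness of $c$. Thus $s=0$, and if $q\geq 1$ then $\beta=\alpha c^q$, contradicting the hypothesis on $\beta$; so $q=0$ as well, $\gamma$ is a vertex, and $\alpha=\beta$.

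For part~(3), I would factor each path maximally along $c$: write $\alpha:=f_1\cdots f_\ell=\alpha_0 c^N$ and $\beta:=g_1\cdots g_m=\beta_0 c^M$ with $N,M\geq 0$ as large as possible. Since $c^Nc^\infty=c^\infty=c^Mc^\infty$, the given equality becomes $\alpha_0 c^\infty=\beta_0 c^\infty\neq 0$. By the maximality of $N$ and $M$ (together with the observation that a vertex cannot equal $\gamma c^{N'}$ with $N'\geq 1$, since the latter has positive length), both $\alpha_0$ and $\beta_0$ fit the hypothesis of part~(2), yielding $\alpha_0=\beta_0$. Taking $N\geq M$ without loss of generality,
\[
\alpha-\beta \;=\; \alpha_0(c^N-c^M) \;=\; \alpha_0 c^M(c^{N-M}-1),
\]
and the identity $c^{N-M}-1=(1+c+c^2+\cdots+c^{N-M-1})(c-1)$, valid in $L_K(E)$ because $c$ is closed so its powers concatenate, places $\alpha-\beta$ in $L_K(E)(c-1)$.

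The main technical step is the period calculation in part~(2), where the basicness hypothesis on $c$ must be used precisely to forbid any nontrivial smaller period of $c^\infty$; once that is clean, part~(3) follows by the pleasant algebraic reduction above.
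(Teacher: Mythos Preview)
Your proof is correct, and the overall strategy matches the paper's, though the organization differs in useful ways. For (1), the paper first iterates $r=rc^m$ and uses a large power of $c$ to rewrite $r$ as a $K$-linear combination of real paths, then finishes with a degree argument; your direct use of the $\mathbb Z$-grading is shorter and avoids that preliminary reduction. For (2) and (3), the paper treats them simultaneously: writing $m-\ell=jn+k$ with $1\le k\le n$, it compares prefixes to obtain $g_1\cdots g_m=f_1\cdots f_\ell\, c^j e_1\cdots e_k$, deduces $e_1\cdots e_k\, c^\infty=c^\infty$, and then asserts from basicness that $k=n$, which yields both conclusions at once. Your route---first stripping maximal trailing $c$-powers so that (3) reduces cleanly to (2), and in (2) making the period/\,Fine--Wilf step explicit---is more modular and makes transparent exactly where basicness is used; the paper's one-pass argument is more compact but glosses over the same period fact.
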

\begin{proof}
(1)  If $r(c-1)=0$, then $r=rc$ and hence $r=rc^m$ for each $m\geq 0$. Let $r=\sum_{i=1}^t k_i\alpha_i\beta_i^*$, with $\alpha_i,\beta_i$ real paths and $k_i\in K$. Denoting by $N$ the maximum length of the $\beta_i$'s, we have that $r=rc^N$ can be written as a $K$-linear combination $\sum_{i=1}^t k_i\gamma_i$ of real paths $\gamma_i$'s. Then, by a degree argument, from $r=rc$ we get $r=0$.\\

(2) and (3)    
Write $c = e_1 e_2 \cdots e_n$.  Assume
\[0\not=f_1\cdots f_\ell c^\infty=g_1\cdots g_mc^\infty\]
for some edges $f_1,...,f_\ell,g_1,...,g_m$.
If $\ell=m$ then $f_i=g_i$ for each $1\leq i\leq \ell=m$. If $m>\ell$, then there exists $j\in\mathbb N$ and $1\leq k\leq n$ such that
\[f_1\cdots f_\ell c^\infty=f_1\cdots f_\ell c^je_1\cdots e_kc^\infty=g_1\cdots g_mc^\infty \]
with $m=\ell+j\times n+k$ and $1\leq k\leq n$, $j\geq 0$. Then by the first equality we get $c^\infty=c^je_1\cdots e_kc^\infty$ and so $c^\infty=e_1\cdots e_kc^\infty$; hence $e_1\cdots e_k=c$ since $c$ is basic. Therefore $k=n$ and $g_1\cdots g_m=f_1\cdots f_\ell c^{j+1}$.   This contradicts the hypotheses in (2), so we have $m=\ell$ and $f_i = g_i$ for all $1\leq i \leq m$ in that case.   
%By hypothesis in (2), this then yields that $g_1\cdots g_m=f_1\cdots f_\ell$.  
Further, this yields 
\[g_1\cdots g_m-f_1\cdots f_\ell=f_1\cdots f_\ell (c^{j+1}-1)=f_1\cdots f_\ell (c^j+\cdots+c+1)(c-1)\in L_K(E)(c-1),\]
which gives (3).  %Thus we get $\overline{f_1\cdots f_\ell}=\overline{g_1\cdots g_m}$.
\end{proof}

\begin{lemma}\label{lemma:riassunto2}
Let $E$ be a finite graph, and $c=e_1\cdots e_n$ a basic closed path in $E$.
%Then $L_K(E)(c-1)$ is the kernel of $\hat\rho_c^\infty:L_K(E)\to V_{[c^\infty]}$, $r\mapsto rc^\infty$, and hence, 
Denoting  by $\rho_{c^\infty}:L_K(E)\to V_{[c^\infty]}$ the map $r\mapsto rc^\infty$ and by $\rho_{c-1}:L_K(E)\to L_K(E)$ the right multiplication by $c-1$,  we have the following short exact sequence of left $L_K(E)$-modules:
\[\xymatrix{0\ar[r]&L_K(E)\ar[r]^{\rho_{c-1}}&L_K(E)\ar[r]^{\rho_{c^\infty}}&V_{[c^\infty]}\ar[r]&0.}
\]
\end{lemma}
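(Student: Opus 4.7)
The plan is to verify the three exactness conditions in turn, relying throughout on Lemma~\ref{lemma:riassunto1}. Injectivity of $\rho_{c-1}$ is exactly part~(1) of that lemma. Surjectivity of $\rho_{c^\infty}$ follows because $\rho_{c^\infty}(1)=s(e_1)\cdot c^\infty=c^\infty\neq 0$ places a nonzero vector in the image, and the simplicity of $V_{[c^\infty]}$ from Theorem~\ref{Chentheoremforsimples} forces the image to be the whole module. The easy half of the middle exactness is the direct computation $(c-1)\cdot c^\infty=c\cdot c^\infty-c^\infty=c^\infty-c^\infty=0$, where $c\cdot c^\infty=c^\infty$ because prepending $c=e_1\cdots e_n$ to $c^\infty$ returns $c^\infty$; together with the fact that $\Ker\rho_{c^\infty}$ is a left ideal, this yields $L_K(E)(c-1)\subseteq \Ker\rho_{c^\infty}$.

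The reverse inclusion $\Ker\rho_{c^\infty}\subseteq L_K(E)(c-1)$ is the main obstacle. Given $r=\sum_i k_i\alpha_i\beta_i^*\in L_K(E)$ with $rc^\infty=0$, my plan is to reduce each monomial modulo $L_K(E)(c-1)$ to a real path, then invoke Lemma~\ref{lemma:riassunto1}(3). Two reductions drive the argument. First, for $\beta=c\beta'$, the identity $c^*c=s(c)$ together with $s(\beta')=s(e_1)$ gives $\alpha\beta^*\cdot c=\alpha(\beta')^*$, hence $\alpha\beta^*\equiv\alpha(\beta')^*\pmod{L_K(E)(c-1)}$; iterating this strips all leading copies of $c$ from $\beta$. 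Second, for $\beta=e_1\cdots e_b$ with $0\leq b<n$, the CK1 computation $(e_1\cdots e_b)^*c=e_{b+1}\cdots e_n$ gives $\alpha\beta^*\equiv\alpha e_{b+1}\cdots e_n\pmod{L_K(E)(c-1)}$, converting the ghost to a real path.

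Combining these reductions: each monomial $\alpha_i\beta_i^*$ either has $\beta_i=c^a e_1\cdots e_b$ a prefix of $c^\infty$ (in which case the two reductions together convert the term to a real path $\alpha_i e_{b+1}\cdots e_n$) or has $\beta_i$ not a prefix of $c^\infty$ (in which case, after stripping the leading $c$'s, the remainder $\gamma$ disagrees with $c$ within the first $n$ edges, forcing $\gamma^*c=0$ by CK1 cancellation, and thus $\alpha_i\beta_i^*\equiv\alpha_i\gamma^*=-\alpha_i\gamma^*(c-1)\in L_K(E)(c-1)$ outright). Therefore $r$ is congruent modulo $L_K(E)(c-1)$ to a $K$-linear combination of real paths $\sum_j\ell_j\mu_j$ still satisfying $\sum_j\ell_j\mu_j c^\infty=0$ in $V_{[c^\infty]}$. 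Grouping the $\mu_j$ by the value $\mu_j c^\infty\in [c^\infty]\cup\{0\}$: terms with $\mu_j c^\infty=0$ have $r(\mu_j)\neq s(e_1)$, giving $\mu_j c=0$ and $\mu_j\in L_K(E)(c-1)$ individually; for each group producing a common nonzero infinite path the pairwise differences lie in $L_K(E)(c-1)$ by Lemma~\ref{lemma:riassunto1}(3), and since the coefficient-sum of each such group must vanish, the group-sum can be rewritten as a combination of those differences and so also lies in $L_K(E)(c-1)$. The main delicacy is the iterative $c$-peeling bookkeeping: one must check that once all leading $c$'s are stripped, the remainder either fits into the ``proper prefix of $c$'' case or produces a genuine CK1 annihilation within length $n$, which is precisely what the failure of $\beta_i$ to be a prefix of $c^\infty$ guarantees.
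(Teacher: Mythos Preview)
Your proposal is correct and follows essentially the same approach as the paper's proof. Both arguments reduce each monomial $\alpha_i\beta_i^*$ modulo $L_K(E)(c-1)$ to a real path (the paper does this in one step via the identity $e_{j_i+1}\cdots e_n - e_{j_i}^*\cdots e_1^*(c^{m_j})^*\in L_K(E)(c-1)$, while you split it into ``strip leading $c$'s'' followed by ``convert the residual prefix of $c$''), separately dispose of the terms annihilated by some power of $c$ (your ``not a prefix of $c^\infty$'' case is exactly the paper's ``$\alpha_i\beta_i^*c^m=0$'' case), and then finish by combining Lemma~\ref{lemma:riassunto1}(3) with the linear independence of distinct infinite paths in $V_{[c^\infty]}$.
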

\begin{proof}
The map $\rho_{c-1}$ is a monomorphism by Lemma~\ref{lemma:riassunto1}(1), and $\rho_{c^\infty}$ is an epimorphism by construction.
Clearly $\Im \rho_{c-1}=L_K(E)(c-1)\subseteq \Ker \rho_{c^\infty}$.
Assume now 
%$c$ is basic and $r\in \Ker \hat\rho_c^\infty$. Let 
$r=\sum_{i=1}^t k_i\alpha_i\beta_i^*$ belongs to $\Ker \rho_{c^\infty}$, with $\alpha_i,\beta_i$ real paths and $k_i\in K$. Our aim is to prove that $\overline r=r+L_K(E)(c-1)=0$ and hence $r\in L_K(E)(c-1)$.
If $\alpha_i\beta_i^*c^m=0$ for a suitable $m\geq 1$, then $\alpha_i\beta_i^*=-\alpha_i\beta_i^*(c^m-1)=-\alpha_i\beta_i^*(1+\cdots+c^{m-1})(c-1)$ and hence $\overline{\alpha_i\beta_i^*}= 0$. Therefore we can assume $\alpha_i\beta_i^*c^m\not=0$ for all $m\geq 0$ and $1\leq i\leq t$. It follows that $\beta_i^*=e_{j_i}^*\cdots e_2^* e_1^*(c^{m_j})^*$ for suitable $1\leq j_i\leq t$ and $m_j\geq 0$. 
Since 
\[e_{j_i+1}\cdots e_n -e_{j_i}^*\cdots e_2^* e_1^*(c^{m_j})^*=e_{j_i}^*\cdots e_2^* e_1^*(c^{m_j})^*(c^{m_j+1}-1)=\]
\[=
e_{j_i}^*\cdots e_2^* e_1^*(c^{m_j})^*(c^{m_j}+c^{m_j-1}+\cdots+1)(c-1)\in L_K(E)(c-1),\]
we have
\[\overline r=\sum_{i=1}^t k_i\alpha_i \overline{e_{j_i}^*\cdots e_2^* e_1^*(c^{m_j})^*}=
\sum_{i=1}^t k_i\overline{\alpha_i e_{j_i+1}\cdots e_n}=\]
\[=\sum_{i=1}^s h_i\overline{f_1\cdots f_{j_i}}\]
where the $h_i$'s belongs to $K$ and the $\overline{f_1\cdots f_{j_i}}$ are distinct elements modulo $L_K(E)(c-1)$. Therefore by Lemma~\ref{lemma:riassunto1}(3)  the ${f_1\cdots f_{j_i}}c^\infty$ expressions  are distinct infinite paths which are tail equivalent to $c^\infty$, and hence linearly independent.
Since $0=rc^\infty=\sum_{i=1}^s h_i{f_1\cdots f_{j_i}}c^\infty$, we get $h_i=0$ for $1\leq i\leq s$ and hence
\[\overline r=\sum_{i=1}^s h_i\overline{f_1\cdots f_{j_i}}=0,\]
as desired.  
%%Now, assume
%%\[0\not=f_1\cdots f_\ell c^\infty=g_1\cdots g_mc^\infty\]
%%for some edges $f_1,...,f_\ell,g_1,...,g_m$.
%%If $\ell=m$ then $f_i=g_i$ for each $1\leq i\leq \ell=m$. If $m>\ell$, then there exists $j\in\mathbb N$ and $1\leq k\leq n$ such that
%%\[f_1\cdots f_\ell c^\infty=f_1\cdots f_\ell c^je_1\cdots e_kc^\infty=g_1\cdots g_mc^\infty \]
%%with $\ell=m+j\times n+k$ and $1\leq k\leq n$, $j\geq 0$. Then by the first equality we get $e_1\cdots e_kc^\infty=c^\infty$ and hence $e_1\cdots e_k=c$ since $c$ is basic. Therefore $k=n$ and $g_1\cdots g_m=f_1\cdots f_\ell c^{j+1}$; hence
%%\[g_1\cdots g_m-f_1\cdots f_\ell=f_1\cdots f_\ell (c^{j+1}-1)=f_1\cdots f_\ell (c^j+\cdots+c+1)(c-1)\in L_K(E)(c-1).\]
%%Thus we get $\overline{f_1\cdots f_\ell}=\overline{g_1\cdots g_m}$.\\
%
%
%
%
%
%$0\not=\alpha_{i_1} e_{j_{i_1}+1}\cdots e_nc^\infty=\alpha_{i_2} e_{j_{i_2}+1}\cdots e_nc^\infty$ implies
%\[\alpha_{i_1} e_{j_{i_1}+1}\cdots e_n=\alpha_{i_2} e_{j_{i_2}+1}\cdots e_nc^{m_{1,2}}\text{ or }\]
%\[\alpha_{i_1} e_{j_{i_1}+1}\cdots e_nc^{m_{2,1}}=\alpha_{i_2} e_{j_{i_2}+1}\cdots e_n.\]
%
%
%
%
%
%
%We have
%\[0=rc^\infty=\left(\sum_{i=1}^n k_i\alpha_i e_{j_i}^*\cdots e_2^* e_1^*c^{m_j}^*\right)c^\infty=\sum_{i=1}^n k_i\alpha_i 
%e_{j_i+1}\cdots e_n c^\infty
%
%=======
%Following the proofs of \cite[Lemma~2.7 and Theorem 2.8]{AMT1}, one realise that the hypotheses can be relaxed to the case $c$ is a basic closed path.
\end{proof}

The short exact sequence established  in Lemma~\ref{lemma:riassunto2} provides a projective resolution for the Chen simple module $V_{[c^\infty]}$. In particular, we get 

\begin{corollary}\label{Vcinftyisotoquotient} 
Let $c$ be a basic closed path in $E$.  Then $ L_K(E)/L_K(E)(c-1)$ is isomorphic to the Chen simple $L_K(E)$-module  $V_{[c^\infty]}$. 
\end{corollary}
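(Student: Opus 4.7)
The plan is to obtain the corollary as an immediate consequence of Lemma~\ref{lemma:riassunto2}. That lemma provides the short exact sequence
\[\xymatrix{0\ar[r]&L_K(E)\ar[r]^{\rho_{c-1}}&L_K(E)\ar[r]^{\rho_{c^\infty}}&V_{[c^\infty]}\ar[r]&0,}\]
so by exactness the image of $\rho_{c-1}$ coincides with the kernel of $\rho_{c^\infty}$. Since $\rho_{c-1}$ is right multiplication by $c-1$, its image is the left ideal $L_K(E)(c-1)$. Applying the first isomorphism theorem to the surjection $\rho_{c^\infty}:L_K(E)\to V_{[c^\infty]}$ yields
\[L_K(E)/L_K(E)(c-1)=L_K(E)/\Ker\rho_{c^\infty}\cong V_{[c^\infty]},\]
as claimed. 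The isomorphism is induced by $r+L_K(E)(c-1)\mapsto rc^\infty$.

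There is no main obstacle here; all the work has already been carried out in Lemma~\ref{lemma:riassunto1} and Lemma~\ref{lemma:riassunto2}. The non-trivial parts are exactly (i) injectivity of $\rho_{c-1}$, which rests on Lemma~\ref{lemma:riassunto1}(1) (a degree argument in $L_K(E)$), and (ii) the containment $\Ker\rho_{c^\infty}\subseteq L_K(E)(c-1)$, which uses Lemma~\ref{lemma:riassunto1}(3) together with the basicness of $c$ to guarantee that the tail-equivalent infinite paths appearing in the rewriting are distinct and hence $K$-linearly independent in $V_{[c^\infty]}$. Once those two facts are in hand, the corollary is just the statement that the cokernel of $\rho_{c-1}$ is $V_{[c^\infty]}$, so essentially no additional argument is required.
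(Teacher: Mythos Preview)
Your proposal is correct and matches the paper's approach exactly: the corollary is stated as an immediate consequence of the short exact sequence of Lemma~\ref{lemma:riassunto2}, with no further argument given. Your added remarks about where the actual work lies (Lemma~\ref{lemma:riassunto1}(1),(3)) are accurate and helpful, but nothing beyond the first isomorphism theorem is needed here.
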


%introduction to $V^f_{c^\infty}$+$V^f_{c^\infty}\not\cong V^g_{d^\infty}$ if $(f,c)\not= (g,d)$???.
%We will concentrate on $f(x)=x-1$, leaving to a short appendix the general case.
%
%An infinite path $p=e_1e_2\cdots$ is tail equivalent to $c^\infty$ if and only if there exists $i_p\in\mathbb N$ such that $e_{i_p}e_{i_p+1}\cdots=c^\infty$. Let us denote by $V_{[c^\infty]}$ the set of all $K$-linear combinations of infinite paths tail equivalent to $c^\infty$: $V_{[c^\infty]}$ is called the Chen simple module associated to $c$. 

%Recalling the proof of \cite[Lemma~2.7]{AMT1}, it is easy to check that  right multiplication by $c-1$ gives a monomorphism $\hat\rho_{c-1}:L_K(E)\to L_K(E)$ whenever $c$ is a closed path.  Moreover, by \cite[Proposition 2.6]{AMT1}, we have $\Ker\hat\rho_{c^\infty}=\Im \hat\rho_{c-1}=L_K(E)(c-1)$.  
%
%Prove for closed path $c$ which are not powers of closed path that:
%1. $\rho_{c^\infty}:L_K(E)\to V_{c^\infty}$ has kernel $L_K(E)(c-1)$ (look at Lemma 2.5, observing that the proof continues to work).
%2. $\rho_{c-1}:L_K(E)\to L(K(E)$ is a monomorphism always.

\section{A division algorithm in $L_K(E)$}\label{divisionalgorithmsection}

Let $c$ be a basic closed path in $E$. In this section we show how any element of $L_K(E)$ may be ``divided by"  $c-1$, in an analogous  manner to the standard division algorithm in $\mathbb{Z}$.   

\begin{definition}\label{AcDefinition}
{\rm Let $E$ be any finite graph, and $c$ any basic closed path in $E$ of length $> 0$ with $v=s(c)$. 
%%%%%%%%the paragraph holds for any simple closed path, not only for loops!. 
We denote by $A_c$  the set of all non-vertex real paths $\alpha$ in $E$ 
%$L_K(E)$ 
which are not divisible by $c$ either on the left or on the right, but are non trivially composable with $c$ on the right.  Formally:
\[A_c=\{\alpha\in\text{Path}(E):  \  |\alpha| \geq 1; \ \alpha \neq \beta c;   \text{ and } \alpha \not=c\gamma \ \text{for any real paths $\beta, \gamma$}, \text{and }r(\alpha) = v\} .\]
For each $i\in\mathbb N_{\geq 1} $
%= \{0,1,2,\dots\}$, 
we denote by $c^iA_c$ the subset $\{c^i\alpha:\alpha\in A_c\}$ of elements of $L_K(E)$. We understand
%
%of paths in $E$ of the form $\{c^i\alpha:\alpha\in A_c\}$. 
 %$c^0A_c$ denotes $A_c$ itself and 
 $c^iA_c=\emptyset$ whenever $A_c=\emptyset$. 
%\bigskip
%
%{\bf A/F:  What is $c^0 A_c$ to denote?}
%
%\bigskip
We denote by $G$ the $K$-vector subspace of $L_K(E)$ generated by $1_{L_K(E)}$, the elements in $A_c$ and the elements in 
$c^iA_c$, $i\in\mathbb N_{\geq 1}$.  That is, 
\[G:= K[1_{L_K(E)}, A_c, \bigcup_{i\in\mathbb N_{\geq 1}}c^iA_c].\]
}
\end{definition}
%
%
%{\bf XXX where do we use the hypothesis that $c$ is basic?}
%\textcolor{red}{In the definition of the map $\sigma$ below: if $d=c^2$ then $c$ belongs to $A_d$ and since $p:=cd^\infty=d^\infty$, $\sigma(p)$ is not well defined ...}
%
%
%
\begin{example}\label{AcExample}
\begin{enumerate}
\item Let $E$ be the graph
\[\xymatrix{\bullet\ar[r]^e&\bullet\ar@(ur,dr)[]|{c}}\]
Then $A_{c}=\{e\}$ and $c^nA_{c}=\{0\}$ for each $n\geq 1$. Then $G$ is the two dimensional vector space generated by $1$ and $e$.
\item Let $E=R_1$, the rose with one petal:
\[R_1:\xymatrix{\bullet\ar@(ul,dl)[]|{c}}\]
Then $A_{c}=\emptyset$ (and so also $c^nA_{c}=\emptyset$ for each $n\geq 1$). Then $G$ is the one dimensional vector space generated by $1$.
\item Let $E=R_2$, the rose with two petals:
\[R_2:\xymatrix{\bullet\ar@(ul,dl)[]|{c}\ar@(ur,dr)[]|{d}}\]
Then $A_{c}=\{d^ic^jd^k:i,k\in\mathbb N_{\geq 1}, j\in\mathbb N\}$ and $c^nA_{c}=\{c^nd^ic^jd^k:i,k\in\mathbb N_{\geq 1}, j\in\mathbb N\}$ for each $n\geq 1$. Then $G$ is a countable dimensional vector space.
%
%
%
%Let $E_i=R_i$, $i=1,2$, the rose with $i$ petals. 
%\[R_1:\xymatrix{\bullet\ar@(ul,dl)[]|{c_1}}\qquad R_2:\xymatrix{\bullet\ar@(ul,dl)[]|{c_2}\ar@(ur,dr)[]|{d}}\]
%Then $A_{c_1}=\emptyset$ and $A_{c_2}=\{d^ic_2^jd^k:i,k\in\mathbb N_{\geq 1}, j\in\mathbb N\}$.
%Moreover $c_1^nA_{c_1}=\emptyset$ and $c_2^nA_{c_2}=\{c_2^nd^ic_2^jd^k:i,k\in\mathbb N_{\geq 1}, j\in\mathbb N\}$ for each $n\in \mathbb N$.
\end{enumerate}
\end{example}

\begin{remark}\label{rem:Gincasecissource}
Clearly the non-zero elements in
$\{1_{L_K(E)}\}\cup A_c\cup \bigcup_{i\in\mathbb N_{\geq 1}}c^iA_c$ form
a $K$-basis for $G$.
%
%$G$ is a $K$-vector space of less or equal than countable dimension with basis the non-zero elements in
%$\{1_{L_K(E)}\}\cup \bigcup_{i\in\mathbb N}c^iA_c$.
%
%
%\bigskip
%
%
%
%
%{\bf A/F ... we should decide whether we want to view $A_c$ as paths in $E$, or as elements in $L_K(E)$.  Of course these have both roles, but we should be consistent with the language.   }
%
%\bigskip
%
Therefore a generic element $g$ in $G$ is of the form
\[g=k1_{L_K(E)}+t_{1}+ct_{2}+c^2t_{3}+\cdots+c^{s-1}t_{s}\]
where $k\in K$ and $t_i$ are $K$-linear combinations in $L_K(E)$ of elements in $A_c$. It is convenient to refer to $k1_{L_K(E)}$ as the {\it scalar part}   of $g$: the latter commutes with any element in $L_K(E)$.\\
If  $c$ is a source loop, then $A_c = \emptyset$ and $c^iA_c=\emptyset$ for all $i\geq 1$: therefore $G$ is the one-dimensional $K$-vector subspace of $L_K(E)$ generated by $1_{L_K(E)}$.\\
If $c=e_1\cdots e_n$ is a source cycle, then $A_c=\{e_n, e_{n-1}e_n,..., e_2e_3\cdots e_n\}$ and $c^iA_c=\{0\}$ for each $i\ge 1$.
%
%{\bf A/F:  Depending on our point of view, maybe $c^i A_c = \emptyset$, rather than $0$?}   
%{\bf G: For us $c^i A_c =0$}
%
Therefore $G$ is the $K$-vector subspace of $L_K(E)$ of dimension $n$ generated by $1_{L_K(E)}$, and the paths $e_n, e_{n-1}e_n,..., e_2e_3\cdots e_n$.\\
In general $G$ is a finite dimensional space if and only if $A_c$ is finite and $cA_c$ is zero or empty. This happens if and only if there are no cycles different from $c$ connected to $s(c)$, i.e., when $c$ is a maximal cycle.  
\end{remark}

\begin{definition}\label{sigmadef}
{\rm  Let $c$ be a basic closed path in $E$.   As above, we denote by $\rho_{c^\infty}: L_K(E)\to V_{[c^\infty]}$ the right multiplication by $c^\infty$  homomorphism.  
%introduced in Lemma~\ref{lemma:riassunto2}.   
By Lemma \ref{lemma:riassunto1}(2), 
each infinite path $p$ tail equivalent to $c^\infty$ uniquely determines an element of $\{1_{L_K(E)}\}\cup A_c\cup
\left(\bigcup_{i\in\mathbb N_{\geq 1}}c^iA_c\right)$, which we denote by $\sigma(p)$.  Specifically, $\sigma(p)$ has the property that 
%, having the property that 
%\[\sigma(p)\in \{1_{L_K(E)}\}\cup A_c\cup
%\left(\bigcup_{i\in\mathbb N_{\geq 1}}c^iA_c\right)\]
 $$p=\sigma(p)c^\infty=\rho_{c^\infty}(\sigma(p)).$$ 
 }
\end{definition}

%{\bf XXX We previously used the letter $\sigma$ to denote a cycle.  Maybe we should change that.}
%\textcolor{red}{DONE}

%\bigskip
%
%{\bf  I am not sure what side to write these maps on.  They are $K$-vector space maps.  But eventually $\hat\rho_{c^\infty}$ will be extended to an $R$-homomorphism.   We can decide that later.}
%
%\bigskip

Extending $\sigma$ by linearity, the maps
$$\sigma: V_{[c^\infty]} \to G \ \  \ \ \mbox{and} \ \ \ \ \  \rho_{c^\infty|G} : G \to V_{[c^\infty]}$$
are then easily seen to be inverse isomorphisms of $K$-vector spaces. 
%Accordingly,  when we think of $V_{[c^\infty]}$ as a $K$-vector space,  we will typically represent it by $G$.

\begin{lemma}\label{zerointersection}    Let $c$ be a basic closed path in $E$.  Then   $L_K(E)(c-1)\cap G = \{0\}.$ 
\end{lemma}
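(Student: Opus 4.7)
The plan is to leverage the short exact sequence of Lemma~\ref{lemma:riassunto2}, which identifies $L_K(E)(c-1)$ with the kernel of $\rho_{c^\infty}: L_K(E) \to V_{[c^\infty]}$, together with the $K$-linear isomorphism $\rho_{c^\infty}|_G : G \to V_{[c^\infty]}$ recorded in the discussion immediately preceding the statement. These two facts together collapse the intersection to zero almost formally.

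Concretely, suppose $g \in L_K(E)(c-1) \cap G$. From $g \in L_K(E)(c-1) = \ker \rho_{c^\infty}$ I get $\rho_{c^\infty}(g) = g c^\infty = 0$ in $V_{[c^\infty]}$. Since $g \in G$ and the restriction $\rho_{c^\infty}|_G$ is injective (indeed an isomorphism, with two-sided inverse $\sigma$), this forces $g = 0$, which is exactly the assertion of the lemma.

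The actual content therefore sits in what has already been done before the statement: the verification that $\sigma: V_{[c^\infty]} \to G$ and $\rho_{c^\infty}|_G: G \to V_{[c^\infty]}$ are mutually inverse $K$-linear isomorphisms. This reduces to two bookkeeping observations about the basis $\{1_{L_K(E)}\}\cup A_c\cup \bigcup_{i\geq 1} c^iA_c$ of $G$. First, for each $p \in [c^\infty]$ one can write $p = \alpha c^\infty$ with $\alpha$ a real path ending at $v=s(c)$ and neither ending in $c$ nor beginning with more copies of $c$ than recorded; decomposing $\alpha = c^i\beta$ with $i$ maximal then places $\alpha$ uniquely into $\{1_{L_K(E)}\}\cup A_c\cup c^iA_c$. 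Second, the uniqueness of that decomposition is precisely Lemma~\ref{lemma:riassunto1}(2), which prevents distinct basis elements of $G$ from producing the same infinite path in $V_{[c^\infty]}$.

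I do not foresee a genuine obstacle: once Lemma~\ref{lemma:riassunto2} supplies the equality $L_K(E)(c-1) = \ker \rho_{c^\infty}$ and the $\sigma$/$\rho_{c^\infty}$ bijection is used to see that $G$ meets every Chen-basis vector of $V_{[c^\infty]}$ exactly once, the statement \textrm{L}$_K(E)(c-1) \cap G = \{0\}$ is a one-line vector-space consequence. The only place where care is needed is ensuring that all elements of the $K$-basis of $G$ are sent to \emph{distinct} basis vectors of $V_{[c^\infty]}$, and that is guaranteed by the basic-closed-path hypothesis on $c$ via Lemma~\ref{lemma:riassunto1}(2).
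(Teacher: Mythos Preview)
Your proof is correct and essentially identical to the paper's: both use that $L_K(E)(c-1)=\ker\rho_{c^\infty}$ from Lemma~\ref{lemma:riassunto2} together with the injectivity of $\rho_{c^\infty}|_G$ (equivalently, $\sigma\circ\rho_{c^\infty}|_G=\mathrm{id}_G$) established just before the statement. The extra paragraph recapping why $\sigma$ and $\rho_{c^\infty}|_G$ are inverse bijections is accurate but redundant, since that fact is already recorded in the paper prior to the lemma.
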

\begin{proof}
If $\ell = \ell_0 (c-1) \in L_K(E)(c-1)\cap G$, then 
$\ell=\sigma(\rho_{c^\infty}(\ell))$ (by the previous observation, as $\ell \in G$), which in turn equals $\sigma(\rho_{c^\infty}(\ell_0(c-1)))=\sigma(\ell_0(c-1)c^\infty)=\sigma(\ell_0(c^\infty - c^\infty)) = \sigma(0)=0.$
\end{proof}

%{\bf XXX do we need that $c$ is basic to get the Division Algorithm?}
%\textcolor{red}{Yes! We need the map $\sigma$!!!}

\begin{theorem}[Division Algorithm by $c-1$]\label{thm:division}  Let $E$ be any finite graph and $K$ any field.  Let $c$ be a \textcolor{black}{basic closed path} in $E$.   Then 
for any $\beta\in L_K(E)$ there exist unique $q_\beta\in  L_K(E)$ and $r_\beta\in G$ such that
\[\beta=q_\beta(c-1)+r_\beta.\]
\end{theorem}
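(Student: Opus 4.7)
The plan is to read off both existence and uniqueness directly from the infrastructure already in place, namely the short exact sequence of Lemma~\ref{lemma:riassunto2}, the $K$-linear inverse isomorphisms $\sigma$ and $\rho_{c^\infty|G}$ between $V_{[c^\infty]}$ and $G$, and the ``transversality'' statement $L_K(E)(c-1)\cap G=\{0\}$ of Lemma~\ref{zerointersection}. Concretely, for a given $\beta\in L_K(E)$ I would set
\[r_\beta := \sigma(\beta c^\infty)\in G.\]
Since $\sigma$ and $\rho_{c^\infty|G}$ are mutually inverse, applying $\rho_{c^\infty}$ gives $r_\beta c^\infty=\beta c^\infty$, hence $(\beta-r_\beta)c^\infty=0$; by Lemma~\ref{lemma:riassunto2} (or equivalently Corollary~\ref{Vcinftyisotoquotient}), $\Ker\rho_{c^\infty}=L_K(E)(c-1)$, so $\beta-r_\beta=q_\beta(c-1)$ for some $q_\beta\in L_K(E)$. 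This gives the existence half.

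For uniqueness, suppose $\beta=q_\beta(c-1)+r_\beta=q'_\beta(c-1)+r'_\beta$ with $r_\beta,r'_\beta\in G$ and $q_\beta,q'_\beta\in L_K(E)$. Then $(q_\beta-q'_\beta)(c-1)=r'_\beta-r_\beta$ lies in $L_K(E)(c-1)\cap G$, which is zero by Lemma~\ref{zerointersection}. Hence $r_\beta=r'_\beta$, and then $(q_\beta-q'_\beta)(c-1)=0$ forces $q_\beta=q'_\beta$ by Lemma~\ref{lemma:riassunto1}(1), which says $c-1$ is not a right zero divisor.

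The argument is essentially a reformulation of the vector-space direct sum decomposition $L_K(E)=L_K(E)(c-1)\oplus G$: the short exact sequence of Lemma~\ref{lemma:riassunto2} identifies $L_K(E)/L_K(E)(c-1)$ with $V_{[c^\infty]}$, the $K$-linear isomorphism $\sigma$ provides a canonical section with image exactly $G$, and Lemma~\ref{zerointersection} records that this section meets $L_K(E)(c-1)$ only at $0$. The only step that required genuine work has already been carried out upstream in building $\sigma$ and establishing Lemma~\ref{zerointersection}; nothing beyond bookkeeping remains here, so I do not anticipate a real obstacle in the proof of the theorem itself.
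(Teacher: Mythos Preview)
Your proof is correct and follows essentially the same approach as the paper: define $r_\beta=\sigma(\rho_{c^\infty}(\beta))$, use $\Ker\rho_{c^\infty}=L_K(E)(c-1)$ from Lemma~\ref{lemma:riassunto2} to produce $q_\beta$, and then invoke Lemma~\ref{zerointersection} together with the injectivity of $\rho_{c-1}$ for uniqueness. Your closing remark that this amounts to the $K$-vector space splitting $L_K(E)=L_K(E)(c-1)\oplus G$ is a nice conceptual summary that the paper leaves implicit.
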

\begin{proof}
%In \cite[Theorem~2.8]{AMT1} we have proved the existence of the following short exact sequence:
%\[\xymatrix{0\ar[r]& L_K(E)\ar[r]^{\hat\rho_{c-1}}&L_K(E)\ar[r]^{\hat\rho_{c^\infty}}&V_{[c^\infty]}\ar[r]&0}.\]
Consider the element $r_\beta:=\sigma\left(\rho_{c^\infty}(\beta)\right)$; clearly $r_\beta$ belongs to $G\subseteq L_K(E)$. The difference $\beta-r_\beta$ belongs to $\Ker\rho_{c^\infty}$, as
 \[\rho_{c^\infty}(\beta-r_\beta)=\beta c^\infty-r_\beta c^\infty=
\beta c^\infty-\sigma\left(\rho_{c^\infty}(\beta)\right) c^\infty 
=%\beta c^\infty-\hat\rho_{c^\infty}(\beta)=
\beta c^\infty-\beta c^\infty=0.\]
Since $\Ker\rho_{c^\infty}=L_K(E)(c-1)$ by Lemma~\ref{lemma:riassunto2}, we have  
%which as mentioned above equals $L_K(E)(c-1)$, so that   
$\beta-r_\beta = q_\beta(c-1)$ for a suitable $q_\beta\in L_K(E)$.   
Let us prove that $q_\beta\in L_K(E)$ and $r_\beta\in G$ are uniquely determined. Assume
\[\beta=q_1(c-1)+r_1=q_2(c-1)+r_2;\]
then we have $r_1-r_2=(q_2-q_1)(c-1)\in L_K(E)(c-1)\cap G$, which is $0$ by Lemma~\ref{zerointersection}.
Therefore $r_1=r_2$ and $\rho_{c-1}(q_2-q_1)=(q_2-q_1)(c-1)=r_1-r_2=0$; since $\rho_{c-1}$ is a monomorphism by Lemma~\ref{lemma:riassunto2}, we have $q_1=q_2$.
\end{proof}

Here are two specific applications of the Division Algorithm by $c-1$, both of which will be quite useful in the sequel.  
\begin{example}\label{rem:c^n}
{\rm  Since
\[c^n=(1+(c-1))^n=\sum_{j=0}^n\binom nj(c-1)^j,\]
by Theorem \ref{thm:division} we deduce
$q_{c^n} = \sum_{j=1}^n\binom nj(c-1)^j$, and $r_{c^n} = 1$.
%
%
%
% In the particular case where $\beta = c^n$, then dividing $\beta$ by $(c-1)$ as in  Theorem \ref{thm:division} gives 
%\[c^n=(1+(c-1))^n=\sum_{j=0}^n\binom nj(c-1)^j.\]
%In particular, $q_{c^n} = \sum_{j=1}^n\binom nj(c-1)^j$, and $r_{c^n} = 1$.\\
}
\end{example}
\begin{example}\label{DivAlgforG}
{\rm  We will have need to multiply various elements of $L_K(E)$ on the  left by $c-1$.   
 Let $g=k1_{L_K(E)}+t_{1}+ct_{2}+c^2t_{3}+ \cdots +c^{s-1}t_{s}$ be an arbitrary  element of $G$.  Then multiplying and collecting appropriate terms yields
\[(c-1)g=  k(c-1)   -t_1+c(t_1-t_{2})+c^2(t_2-t_{3})+\cdots+c^{s-1}(t_{s-1}-t_{s})+c^st_s.\]
So by the uniqueness part of the Division Algorithm, we get
$$q_{(c-1)g}=k1_{L_K(E)},  \mbox{ and }   r_{(c-1)g}=-t_1+c(t_1-t_{2})+c^2(t_2-t_{3})+\cdots+c^{s-1}(t_{s-1}-t_{s})+c^st_s.$$
Note in particular that the scalar part of $r_{(c-1)g}$ is $0$.   
}
\end{example}

\begin{remark}
If $E=R_1$ is the rose with one petal $c$, then $L_K(E)\cong K[x,x^{-1}]$ via $c\mapsto x$. In such a case the above Division Algorithm with respect to $c-1$ corresponds to the usual division by $x-1$.
\end{remark}

%{\bf XXX do we want to be consistent on our use of $1$ vs. $1_{L_K(E)}$?  }
%\textcolor{red}{The idea is to use $1_{L_K(E)}$ only when there are more rings involved, like in the Morita equivalences}

%%%%%%%%%%%%%%%%%%%%%%%%%%%%
%
%
%%%%% \section{The modules $\Muni {i}{L_K(E)}{c-1}$ and their $G$-representation}
%
%
%%%%%%%%%%%%%%%%%%%%%%%%%%%%%

\section{The Pr\"ufer modules $U_{L_K(E), c-1}$}
%\section{The modules $\Muni {n}{L_K(E)}{c-1}$ and their $G$-representation}
\label{Umodulessection} 
%
%
%Recall that for $r \in L_K(E)$ we let $\langle r \rangle$ denote the left ideal $L_K(E)r$. 

Let $c$ be a basic closed path in $E$. By Lemmas~\ref{lemma:riassunto1}(1)  and  \ref{lemma:riassunto2}, the element $c-1$ is neither a right zero divisor, nor left invertible.   Therefore we can apply the construction of the Pr\"ufer module given in Section~\ref{sec:generalring} for $R = L_K(E)$ and $a = c-1$. For efficiency, in the sequel we use the following notation.  
$$\Muni {n}{E}{c-1}:=\Muni {n}{L_K(E)}{c-1}; \ \ \psi_{E,i,j}:=\psi_{L_K(E),i,j}; \ \ U_{E,c-1}:=U_{L_K(E),c-1}; \ \ \mbox{and} \ \psi_{E,i}:=\psi_{L_K(E),i}.$$

Most importantly for us, by Corollary \ref{Vcinftyisotoquotient}  $\Muni 1E{c-1} = L_K(E) / L_K(E)(c-1)$ is simple, indeed, is isomorphic to the Chen simple module $V_{[c^\infty]}$.

%{\bf XXX In order to see that the  $\Muni 1E{c-1}$ are uniserial we need to show that the appropriate equation from Proposition \ref{prop:uniserialchar} has no solution, and we haven't done that yet.   So I have moved things around a little bit.    ...}  
%\textcolor{red}{We postponed also the description of the generators $\alpha_i$}

%It could be an idea to write in this section how to represent the elements in  $M_n=R/{<(c-1)^n>}$, using the division algorithm. Then discuss how to solve equations in $M_n$ using this representation.

%\medskip

For the sequel, it is useful to have a canonical representation of the elements of the uniserial modules $\Muni n E{c-1}$, $n\geq 1$.

\begin{proposition}\label{prop:Grep}
Let $c$ be a basic closed path in $E$, $n\in \mathbb{N}$ and $x\in \Muni {n}{E}{c-1}$.  Then $x$ can be written in a unique way as $$x = g_1+g_2(c-1)+\cdots+g_n(c-1)^{n-1}+L_K(E)(c-1)^n$$ with the $g_i$'s belonging to $G$. %and uniquely determined. 
We call the displayed expression  the $G${\rm -representation} of $x$.
\end{proposition}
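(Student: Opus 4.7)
The plan is to establish existence by iterating the Division Algorithm (Theorem~\ref{thm:division}) $n$ times on a chosen lift, and then to establish uniqueness by an induction that combines Lemma~\ref{zerointersection} with the fact that $c-1$ is not a right zero divisor (Lemma~\ref{lemma:riassunto1}(1)).

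For existence, pick any $\beta\in L_K(E)$ representing $x$. Apply Theorem~\ref{thm:division} to write $\beta=g_1+q_1(c-1)$ with $g_1\in G$ and $q_1\in L_K(E)$. Apply the Division Algorithm again to $q_1$, writing $q_1=g_2+q_2(c-1)$; substituting gives $\beta=g_1+g_2(c-1)+q_2(c-1)^2$. Iterating this $n$ times yields
\[\beta=g_1+g_2(c-1)+\cdots+g_n(c-1)^{n-1}+q_n(c-1)^n,\]
with every $g_i\in G$. Since $q_n(c-1)^n\in L_K(E)(c-1)^n$, reducing modulo $L_K(E)(c-1)^n$ produces the desired $G$-representation of $x$.

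For uniqueness it suffices to show that whenever $h_1,\dots,h_n\in G$ satisfy $h_1+h_2(c-1)+\cdots+h_n(c-1)^{n-1}\in L_K(E)(c-1)^n$, all $h_i$ vanish. Induct on $n$. The base case $n=1$ is precisely Lemma~\ref{zerointersection}. For $n>1$, suppose the sum equals $\ell(c-1)^n$ for some $\ell\in L_K(E)$. Reducing modulo $L_K(E)(c-1)$ shows $h_1\in L_K(E)(c-1)\cap G$, which by Lemma~\ref{zerointersection} forces $h_1=0$. The remaining identity may be rewritten as
\[\bigl(h_2+h_3(c-1)+\cdots+h_n(c-1)^{n-2}\bigr)(c-1)=\bigl(\ell(c-1)^{n-1}\bigr)(c-1),\]
and since right multiplication $\rho_{c-1}$ is injective by Lemma~\ref{lemma:riassunto1}(1), we may cancel the trailing $(c-1)$ to conclude $h_2+\cdots+h_n(c-1)^{n-2}=\ell(c-1)^{n-1}\in L_K(E)(c-1)^{n-1}$. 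The induction hypothesis then gives $h_2=\cdots=h_n=0$.

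The main obstacle is the uniqueness step, specifically the right-cancellation of $(c-1)$; this is why it is essential to peel off the constant term $h_1$ first (using Lemma~\ref{zerointersection}) before invoking the non-zero-divisor property to drop the degree by one. Existence is essentially a bookkeeping exercise in iterating Theorem~\ref{thm:division}.
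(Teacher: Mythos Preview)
Your proof is correct and follows essentially the same route as the paper's: iterate the Division Algorithm for existence, then for uniqueness peel off the leading term via $L_K(E)(c-1)\cap G=\{0\}$ and cancel a factor of $(c-1)$ using injectivity of $\rho_{c-1}$, repeating (the paper phrases this repetition directly rather than as a formal induction, but the argument is the same).
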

%\medskip

%{\bf $G$-representation for $M_2$ (and $M_n$)}:

\begin{proof}   
Assume $x\in \Muni {n}{E}{c-1}$, and write $x=y+L_K(E)(c-1)^n$ with $y\in L_K(E)$. Then invoking Theorem \ref{thm:division} $n$ times we have $y=q_{1}(c-1)+g_{1}$, $q_{1}=q_{2}(c-1)+g_{2}$, ..., and $q_{n-1}=q_{n}(c-1)+g_{n}$.  Therefore
 \[x=y+L_K(E)(c-1)^n=g_{1}+g_{2}(c-1)+\cdots+g_{n}(c-1)^{n-1}+L_K(E)(c-1)^n,\]
where the elements $g_{i}$, $i=1,...,n$, belong to $G$.
 Assume now $x=g'_{1}+g'_{2}(c-1)+\cdots+g'_{n}(c-1)^{n-1}+L_K(E)(c-1)^n$, where $g'_{i}$, $i=1,...,n$, belong to $G$.  Then  $(g_1 - g_1') + (g_2-g_2')(c-1)+\cdots+ (g_n-g_n')(c-1)^{n-1}$ belongs to $L_K(E)(c-1)^n$. Therefore
$ g_1 - g_1'$ belongs to $L_K(E)(c-1)\cap G=0$ (by Lemma \ref{zerointersection}), and hence $g_1=g_1'$. Since multiplication by $c-1$ on the right is a monomorphism, we get that $(g_2-g_2') + (g_3 - g_3')(c-1) +\cdots+ (g_n-g_n')(c-1)^{n-2}$ belongs to $L_K(E)(c-1)^{n-1}$; therefore also 
 $g_2-g_2'$ belongs to $L_K(E)(c-1)\cap G=0$, and hence $g_2=g_2'$. Repeating the same argument we get $g_i=g_i'$ for $i=1,...,n$.
\end{proof}

\begin{example}
If $E=R_1$ and hence $L_K(E)\cong K[x,x^{-1}]$, then $\Muni {n}{R_1}{c-1}\cong K[x,x^{-1}]/\langle(x-1)^n\rangle$. For instance, the $G$ representation of $x^{-4}+2+x+K[x,x^{-1}](x-1)^3$ can easily be shown to be
\[4-3(x-1)+10(x-1)^2+K[x,x^{-1}](x-1)^3.\]
\end{example}

     %So by   Proposition \ref{prop:uniserialchar},  each $\Muni nE{c-1}$ has a unique composition series of length $n$ with composition factors isomorphic to $\Muni 1E{c-1}\cong V_{[c^\infty]}$.

    We are now in position to show that the  modules $\Muni {i}E{c-1}$, $i\geq 1$, satisfy the hypotheses of Propositions \ref{prop:uniserialchar} and \ref{genUisuniserialandartinian}.

\begin{proposition}\label{nosolutionsto(c-1)YinMn}
For any basic closed path $c$ in $E$, the equation 
$$(c-1)\mathbb X = 1 + L_K(E)(c-1)^n$$
has no solution in $\Muni {n}{E}{c-1}$.  
\end{proposition}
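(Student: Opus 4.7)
The plan is to argue by contradiction, using the $G$-representation machinery from Proposition~\ref{prop:Grep} together with the explicit left-multiplication formula recorded in Example~\ref{DivAlgforG}. Suppose towards contradiction that $\mathbb{X}=y+L_K(E)(c-1)^n$ is a solution, so that $(c-1)y \equiv 1 \pmod{L_K(E)(c-1)^n}$. First I would invoke Proposition~\ref{prop:Grep} to write
\[
y \;=\; g_1+g_2(c-1)+\cdots+g_n(c-1)^{n-1}+w(c-1)^n
\]
for suitable $g_1,\dots,g_n\in G$ and $w\in L_K(E)$.

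Next I would left-multiply by $c-1$ and apply Example~\ref{DivAlgforG} to each summand $(c-1)g_i$. Writing $g_i = k_i\cdot 1_{L_K(E)} + t_{i,1}+ct_{i,2}+\cdots+c^{s_i-1}t_{i,s_i}$, that example yields $(c-1)g_i = k_i(c-1)+r_i$ where $r_i\in G$ has \emph{scalar part zero}. Collecting the contributions to each power $(c-1)^j$, one obtains
\[
(c-1)y \;\equiv\; r_1 + \bigl[k_1\cdot 1_{L_K(E)}+r_2\bigr](c-1) + \cdots + \bigl[k_{n-1}\cdot 1_{L_K(E)}+r_n\bigr](c-1)^{n-1} \pmod{L_K(E)(c-1)^n},
\]
and each bracketed coefficient lies in $G$ because $G$ is a $K$-subspace containing $1_{L_K(E)}$. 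This is therefore a legitimate $G$-representation of $(c-1)y+L_K(E)(c-1)^n$ in the sense of Proposition~\ref{prop:Grep}.

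On the other hand, the $G$-representation of $1+L_K(E)(c-1)^n$ is simply $g_1=1_{L_K(E)}$, $g_2=\cdots=g_n=0$. By the uniqueness clause of Proposition~\ref{prop:Grep}, equality of these two representations would force $r_1 = 1_{L_K(E)}$ in $G$. However, $r_1 = r_{(c-1)g_1}$ has scalar part $0$ while $1_{L_K(E)}$ has scalar part $1$, contradicting the fact (implicit in Remark~\ref{rem:Gincasecissource}) that the scalar part of an element of $G$ is well-defined relative to the basis $\{1_{L_K(E)}\}\cup A_c\cup\bigcup_{i\geq 1}c^iA_c$.

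The main obstacle—really the only nontrivial point—is ensuring that the expression obtained for $(c-1)y$ is in the canonical form required by Proposition~\ref{prop:Grep}, so that the uniqueness can be applied cleanly; the verification reduces to the observation in Example~\ref{DivAlgforG} that left multiplication by $c-1$ kills the scalar part, which is exactly what makes the contradiction work.
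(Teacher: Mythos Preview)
Your proposal is correct and follows essentially the same argument as the paper: assume a solution, pass to its $G$-representation via Proposition~\ref{prop:Grep}, left-multiply each coefficient by $c-1$ using Example~\ref{DivAlgforG} to see that the degree-zero coefficient has vanishing scalar part, and then invoke uniqueness of the $G$-representation to contradict $r_1 = 1_{L_K(E)}$. The only cosmetic difference is that you lift to a representative $y\in L_K(E)$ with an explicit $w(c-1)^n$ tail, whereas the paper works directly in $\Muni nE{c-1}$; the substance is identical.
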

\begin{proof}
  By Proposition \ref{prop:Grep}, we have to verify that the following equation in the $n$ variables $X_1$,..., $X_n$ does not admit solutions in $G^n$ (the direct product of $n$ copies of $G$):
 \[(c-1)(X_1 + X_2(c-1) + \cdots + X_n(c-1)^{n-1}+L_K(E)(c-1)^n)=1+L_K(E)(c-1)^n.\]
Assume on the contrary that $X_i=g_i$ (for $i=1,..., n$) is a solution. Let $k_i1_{L_K(E)}$ be the scalar part of $g_i$. Since $(c-1)g_i=k_i(c-1) + g'_i$ for a suitable $g'_i\in G$ whose scalar part is zero (see Lemma \ref{DivAlgforG}), we would have 
  \begin{align*}
  1+L_K(E)(c-1)^n
&=(c-1)(g_1+ g_2(c-1) + \cdots+g_n(c-1)^{n-1}+L_K(E)(c-1)^n) \\
&=  g'_1+(g'_2 +k_1)(c-1)+ \cdots+ (g'_n+k_{n-1})(c-1)^{n-1}+L_K(E)(c-1)^n.
\end{align*}
By Proposition~\ref{prop:Grep} the $G$-representation of each element  of $\Muni {n}{E}{c-1}$ is unique. Therefore we would have that $g'_1=1_{L_K(E)}$ has non zero scalar part, a contradiction.
\end{proof}

%\begin{corollary}\label{cor:Uniserial}
%If $c$ is a simple closed path, then $M_n(c)$ is uniserial of length $n$ for each $n\in\mathbb N$ whose composition factors are isomorphic to the Chen simple module $V_{[c^\infty]}$. 
%For each $m\leq n$ we have the following canonical inclusion:
%\[\psi_{m,n}:M_m(c)\to M_n(c),\quad 1+\langle (c-1)^m\rangle\mapsto (c-1)^{n-m}+\langle (c-1)^n\rangle.\]
%\end{corollary}
%\begin{proof}
%If $c$ is a simple closed path, then $L_K(E)/<(c-1)>$ is a simple Chen module. We conclude by Lemma~\ref{nosolutionsto(c-1)YinMn} and Proposition~\ref{prop:uniserialchar}.
%The last inclusion follows by general consideration: see Section~\ref{sec:generalring}.
%\end{proof}
So Corollary \ref{Vcinftyisotoquotient}  and     Proposition \ref{nosolutionsto(c-1)YinMn}   combine with  Propositions \ref{prop:uniserialchar} and \ref{genUisuniserialandartinian} to immediately yield the  following key result.
  
\begin{theorem}\label{Pruferisuniserialandartinian}
Let  $c$ be a basic closed path in $E$.  

1)  For each $n\in \N$,  the $L_K(E)$-module    $\Muni {n}{E}{c-1}$ has a unique composition series, with all composition factors isomorphic to $V_{[c^\infty]}$. 

2)     The Pr\"ufer  $L_K(E)$-module $\Uni E{c-1}$ is uniserial  and artinian (and not noetherian).
\end{theorem}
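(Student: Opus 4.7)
The plan is to verify that the hypotheses needed to apply Propositions~\ref{prop:uniserialchar} and \ref{genUisuniserialandartinian} to the element $a = c-1 \in R = L_K(E)$ are already in place, and then simply invoke those two results.

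First I would confirm the standing assumptions of Section~\ref{sec:generalring}: that $c-1$ is neither a right zero divisor nor left invertible in $L_K(E)$. The first is Lemma~\ref{lemma:riassunto1}(1), which says precisely that $\rho_{c-1}$ is a monomorphism. The second holds because if $c-1$ were left invertible, then $\rho_{c-1}$ would be a split epimorphism, making $L_K(E)(c-1) = L_K(E)$; but by Corollary~\ref{Vcinftyisotoquotient}, the quotient $L_K(E)/L_K(E)(c-1)$ is isomorphic to the nonzero Chen simple module $V_{[c^\infty]}$, so $L_K(E)(c-1) \neq L_K(E)$. Hence the construction $\Muni{n}{E}{c-1}$ and $\Uni{E}{c-1}$ from Section~\ref{sec:generalring} applies.

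Next I would check hypotheses (1) and (2) of Proposition~\ref{prop:uniserialchar} with $R = L_K(E)$ and $a = c-1$. Hypothesis (1), that $\Muni{1}{E}{c-1}$ is simple, is exactly Corollary~\ref{Vcinftyisotoquotient}, which identifies $\Muni{1}{E}{c-1}$ with the Chen simple module $V_{[c^\infty]}$. Hypothesis (2), that the equation $(c-1)\mathbb{X} = 1 + L_K(E)(c-1)^i$ has no solution in $\Muni{i}{E}{c-1}$ for every $1 \leq i < n$, is precisely the content of Proposition~\ref{nosolutionsto(c-1)YinMn} (which is stated for every $n$, hence in particular for every $i$).

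Applying Proposition~\ref{prop:uniserialchar} then yields part (1): for each $n \geq 1$, the module $\Muni{n}{E}{c-1}$ is uniserial of length $n$, with unique composition series $0 < \Im\psi_{E,1,n} < \cdots < \Im\psi_{E,n-1,n} < \Muni{n}{E}{c-1}$, and every composition factor is isomorphic to $\Muni{1}{E}{c-1} \cong V_{[c^\infty]}$. With this uniseriality in hand for every $n$, Proposition~\ref{genUisuniserialandartinian} applies and gives part (2): $\Uni{E}{c-1}$ is uniserial, artinian, and not noetherian. Since all the technical work has been done in the preceding propositions, there is no real obstacle here; the proof is essentially a bookkeeping step assembling references.
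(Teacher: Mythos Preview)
Your proposal is correct and matches the paper's approach exactly: the paper simply states that Corollary~\ref{Vcinftyisotoquotient} and Proposition~\ref{nosolutionsto(c-1)YinMn} combine with Propositions~\ref{prop:uniserialchar} and~\ref{genUisuniserialandartinian} to yield the result. Your write-up is slightly more explicit in re-checking the standing assumptions on $c-1$ (the paper records these at the opening of Section~\ref{Umodulessection}, citing Lemmas~\ref{lemma:riassunto1}(1) and~\ref{lemma:riassunto2}), but otherwise the arguments are identical.
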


\textcolor{black}{The left $L_K(E)$-module $\Uni E{c-1}$ is generated by the elements 
$\alpha_i:=\psi_{E,i}(1+L_K(E)(c-1)^i)$, which satisfy\[(c-1)\alpha_i=\begin{cases}
     0 & \text{if }i=1, \\
    \alpha_{i-1}  & \text{if }i>1.
\end{cases}
\]
}

\begin{remark} \label{remark:levels}
By Proposition~\ref{nosolutionsto(c-1)YinMn},  the equation 
\[(c-1)\mathbb X=1+L_K(E)(c-1)^{n}\] 
has no solution in $\Muni {n}{E}{c-1}$. But identifying $\Muni {n}{E}{c-1}$ with its copy $\psi_{E,n,n+1}(\Muni {n}{E}{c-1})$ in $\Muni{n+1}{E}{c-1}$, the same equation has the form
\[(c-1)\mathbb X=(c-1)+L_K(E)(c-1)^{n+1},\]
which  clearly  admits the solution $\mathbb X=1+L_K(E)(c-1)^{n+1}$. This observation will be crucial to study the injectivity of the Pr\"ufer modules discussed in the following section.   
\end{remark}

\textcolor{black}{If $c'$ is a cyclic shift of the basic closed path $c$, then it is clear that $V_{[c^\infty]}=V_{[{c'}^\infty]}$.}
We conclude the section with a perhaps-not-surprising result which shows that the  cyclic shift of a basic closed path does not affect the isomorphism class of the corresponding Pr\"ufer module.

%Assume $c=e_1e_2\cdots e_n$ with $n\geq 2$,. We denote by $c_i$ the cycle $e_i\cdots e_ne_1\cdots e_{i-1}$; in particular $c=c_1$.
%
\begin{proposition}
Let $c=e_1e_2\cdots e_n$ with $n\geq 2$ be a basic closed path. Denote by $c_i$ the basic closed path $e_i\cdots e_ne_1\cdots e_{i-1}$. Then the modules $\Muni {m}{E}{c-1}$ and $\Muni {m}{E}{c_\ell-1}$ are isomorphic for all $1\leq \ell\leq n$ and $m\in\mathbb N_{\geq 1}$. In addition, the corresponding Pr\"ufer modules $\Uni E{c-1}$ and $\Uni E{c_\ell-1}$ are isomorphic  for all $1\leq \ell\leq n$.
\end{proposition}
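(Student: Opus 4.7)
The plan is to construct an isomorphism $\Muni{m}{E}{c-1} \cong \Muni{m}{E}{c_\ell-1}$ explicitly via right multiplication by an appropriately chosen path, and then to pass to the direct limit. Set $\alpha := e_1 e_2 \cdots e_{\ell-1}$ (interpreted as the vertex $s(c)$ when $\ell = 1$) and $\beta := e_\ell e_{\ell+1} \cdots e_n$, so that $c = \alpha\beta$ and $c_\ell = \beta\alpha$ in $L_K(E)$. A direct computation using these factorizations yields the intertwining relations $c\alpha = \alpha c_\ell$ and $c_\ell\beta = \beta c$, and an easy induction on $m \geq 1$ gives
\[
(c-1)^m \alpha = \alpha(c_\ell - 1)^m, \qquad (c_\ell - 1)^m \beta = \beta(c-1)^m.
\]

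Using the first identity, right multiplication by $\alpha$ sends $L_K(E)(c-1)^m$ into $L_K(E)(c_\ell - 1)^m$, so it induces a left $L_K(E)$-homomorphism
\[
\bar\Phi_m \colon \Muni{m}{E}{c-1} \to \Muni{m}{E}{c_\ell - 1}, \qquad x + L_K(E)(c-1)^m \mapsto x\alpha + L_K(E)(c_\ell - 1)^m;
\]
symmetrically, right multiplication by $\beta$ produces $\bar\Psi_m$ in the opposite direction. The composition $\bar\Psi_m \circ \bar\Phi_m$ is right multiplication by $\alpha\beta = c$ on $\Muni{m}{E}{c-1}$, which, writing $c = 1 + (c-1)$, equals $\operatorname{id} + R$ where $R$ denotes right multiplication by $c-1$ on the quotient. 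Because $R^m(x + L_K(E)(c-1)^m) = x(c-1)^m + L_K(E)(c-1)^m = 0$, the operator $R$ is nilpotent and $\operatorname{id} + R$ is an automorphism with explicit inverse $\sum_{k=0}^{m-1}(-R)^k$. The analogous argument shows $\bar\Phi_m \circ \bar\Psi_m$ is an automorphism of $\Muni{m}{E}{c_\ell-1}$. Hence $\bar\Phi_m$ is simultaneously split mono and split epi, so an isomorphism.

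To obtain the isomorphism of Pr\"ufer modules, check that $\{\bar\Phi_m\}_m$ is a morphism of the direct systems defining $\Uni{E}{c-1}$ and $\Uni{E}{c_\ell-1}$: a direct computation shows that both $\bar\Phi_{m+1} \circ \psi_{E,m,m+1}$ and $\psi_{E,m,m+1} \circ \bar\Phi_m$ send $x + L_K(E)(c-1)^m$ to $x\alpha(c_\ell - 1) + L_K(E)(c_\ell - 1)^{m+1}$, using that $(c-1)\alpha = \alpha(c_\ell - 1)$. Taking the direct limit then gives the desired isomorphism $\Uni{E}{c-1} \cong \Uni{E}{c_\ell - 1}$. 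The main conceptual step is spotting the intertwiner $\alpha$ and realizing that the nilpotency of $R$ on the finite-length quotient is what upgrades the compositions $\bar\Psi_m\bar\Phi_m$ and $\bar\Phi_m\bar\Psi_m$ from mere endomorphisms to automorphisms, forcing $\bar\Phi_m$ to be an isomorphism.
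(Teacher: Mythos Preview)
Your proof is correct and follows essentially the same strategy as the paper: both use right multiplication by the path $\alpha = e_1\cdots e_{\ell-1}$ (your $\bar\Phi_m$ is exactly the paper's $\varphi_{1,\ell}$) together with the intertwining relations $(c-1)\alpha = \alpha(c_\ell-1)$ and $(c_\ell-1)\beta = \beta(c-1)$, and both pass to the direct limit via the same commutativity check.

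The one point of genuine difference is how the map is shown to be an isomorphism. The paper writes down an explicit two-sided inverse $\varphi_{\ell,1}$, namely right multiplication by $(-1)^{m-1}\beta\sum_{i=1}^{m}\binom{m}{i}(-1)^{m-i}c^{\,i-1}$, and verifies directly that both composites equal the identity. Your argument is cleaner: you observe that $\bar\Psi_m\bar\Phi_m$ is right multiplication by $c = 1 + (c-1)$, which is unipotent on $\Muni{m}{E}{c-1}$ since right multiplication by $(c-1)$ is nilpotent there, and similarly for the other composite. This avoids the binomial bookkeeping entirely; the paper's explicit formula is, in effect, just $\beta$ followed by the geometric-series inverse $\sum_{k=0}^{m-1}(-(c-1))^k$ that your nilpotency argument produces automatically. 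So the two arguments agree in substance, with yours being the more conceptual packaging.
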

\begin{proof}
It is easy to verify that $(c_1-1)e_1\cdots e_{\ell-1}=e_1\cdots e_{\ell-1}(c_\ell-1)$ and
$(c_\ell-1)e_\ell\cdots e_n=e_\ell \cdots e_{n}(c_1-1)$.
So the maps
$\varphi_{1,\ell}: \Muni {m}{E}{c_1-1}\to \Muni {m}{E}{c_\ell-1}$ and $\varphi_{\ell,1}:\Muni {m}{E}{c_\ell-1}\to \Muni {m}{E}{c_1-1}$ given by
\[\varphi_{1,\ell}:1+L_K(E)(c_1-1)^m \mapsto e_1\cdots e_{\ell-1}+L_K(E)(c_\ell-1)^m,\quad \text{and}\]
\[
\varphi_{\ell,1}:1+L_K(E)(c_\ell-1)^m\mapsto (-1)^{m-1}e_\ell\cdots e_n\sum_{i=1}^m\binom mi (-1)^{m-i}c_1^{i-1}
+L_K(E)(c_1-1)^m\]
are well defined. Let us prove that they are inverse isomorphisms.
Denote by $\overline r$ both the cosets $r+L_K(E)(c_1-1)^m$ and $r+L_K(E)(c_\ell-1)^m$. Then
\begin{align*}
\varphi_{\ell,1}\circ \varphi_{1,\ell}(\overline{1}) & =\varphi_{\ell,1}(\overline{e_1\cdots e_{\ell-1}}) \\
& =\overline{(-1)^{m-1}c_1\sum_{i=1}^m\binom mi (-1)^{m-i}c_1^{i-1}} \\
& =\overline{(-1)^{m-1}\sum_{i=1}^m\binom mi (-1)^{m-i}c_1^{i}} \\
&= \overline{(-1)^{m-1}\big((c_1-1)^m-(-1)^m\big)} \\
& = \overline{(-1)^{m-1}(-(-1)^m)}\\ 
&  = \overline{1}.
\end{align*}
Analogously
\begin{align*} 
\varphi_{1,\ell}\circ\varphi_{\ell,1}(\overline{1}) &= \varphi_{1,\ell}\big(\overline{(-1)^{m-1}e_\ell\cdots e_n\sum_{i=1}^m\binom mi (-1)^{m-i}c_1^{i-1}}\big) \\
& =\overline{(-1)^{m-1}e_\ell\cdots e_n\sum_{i=1}^m\binom mi (-1)^{m-i}c_1^{i-1}e_1\cdots e_{\ell-1}} \\
&= \overline{(-1)^{m-1}e_\ell\cdots e_ne_1\cdots e_{\ell-1}\sum_{i=1}^m\binom mi (-1)^{m-i}c_\ell^{i-1}} \\
&=\overline{ (-1)^{m-1}\sum_{i=1}^m\binom mi (-1)^{m-i}c_\ell^{i}} \\
& =\overline{(-1)^{m-1}\big((c_\ell-1)^m-(-1)^m\big) }\\
& =\overline{ (-1)^{m-1}(-(-1)^m)} \\
& =\overline{1}.   
\end{align*} 
Again using the initial observation, it is straightforward to check the commutativity of the appropriate diagrams, which gives the second statement.
\end{proof}

%\bigskip

%{\bf A/F:   I'm not sure exactly what we are trying to say in the previous remark.}

%\bigskip

%%%%%%%%%%%%%%%%%%%%%%%%
%
%    InjectivePrufersection
%
%%%%%%%%%%%%%%%%%%%%%%%%%%

\section{Conditions for injectivity of the Pr\"ufer modules $\Uni E{c-1}$}\label{InjectivePrufersection}

Let $E$ be any finite graph,  and let $c$ denote a basic closed path in $E$.

Of course the module $\Uni E{c-1}$ mimics in many ways the classical,  well-studied  Pr\"ufer groups from abelian group theory (see Example~\ref{ex:abeliangroups}).   It is well-known that the Pr\"ufer groups are divisible $\mathbb{Z}$-modules, hence injective.    With that observation as motivation, we study in the sequel the question of whether the Pr\"ufer modules $\Uni E{c-1}$ for various basic closed paths $c$ are injective $L_K(E)$-modules. The discussion will culminate in Theorem \ref{thm:injective}, characterizing the injectivity solely in terms of how the basic closed path $c$ sits in the graph $E$.

%{\bf XXX We should note somewhere that $\rho_{c-1}$ gives a hom from $\Uni E{c-1}$ to itself, because we use that in the next result. }
%\textcolor{red}{No; it is not clear if $\rho_{c-1}$ gives a hom from $\Uni E{c-1}$ to itself. We have changed the map in the proof.}

\begin{proposition}\label{Ucnotpureinjective} 
Let $E$ be a finite graph, let $c$ be a basic closed path in $E$ based at $s(c) = v$, and let $\Uni E{c-1}$ be the Pr\"ufer module associated to  $c$.
Suppose that there exists a cycle $d\not=c$ which connects to $v$.
%
%
%are infinitely many distinct paths $p$ in $A_c$ for which $r(p)=v$.
% $ \alpha c \neq 0$ for an infinite number of distinct  paths $\alpha  \in A_c$.    
Then  $\Uni E{c-1}$ is not injective.
\end{proposition}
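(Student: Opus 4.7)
My plan is to invoke the Ext computation produced by the projective resolution of the Chen simple module $V_{[d^\infty]}$. A cycle has distinct source vertices and is therefore a basic closed path, so Lemma~\ref{lemma:riassunto2} applied to $d$ yields the exact sequence $0\to L_K(E)\xrightarrow{\rho_{d-1}} L_K(E)\to V_{[d^\infty]}\to 0$, whence $\Ext^1(V_{[d^\infty]},U_{E,c-1})\cong U_{E,c-1}/(d-1)U_{E,c-1}$. If $U_{E,c-1}$ were injective this Ext group would vanish, so it suffices to exhibit an element of $U_{E,c-1}$ outside the image of left multiplication by $d-1$. My candidate is $\beta\alpha_1\in V_{[c^\infty]}\subset U_{E,c-1}$, where $\beta$ is a path from $s(d)$ to $v$ chosen so that the first edge of $\beta$ differs from the first edge of $d$ (take $\beta=v$ if $s(d)=v$); such a $\beta$ exists under the connectivity hypothesis.

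Suppose for contradiction that $(d-1)u=\beta\alpha_1$ for some $u\in U_{E,c-1}$. Realising $u$ as the image of $\bar m\in\Muni nE{c-1}$ for some $n\ge 1$, this translates into the existence of $m\in L_K(E)$ with $(d-1)m\equiv\beta(c-1)^{n-1}\pmod{L_K(E)(c-1)^n}$. I would prove no such $m$ exists by induction on $n$. For the inductive step $n\ge 2$, applying $\rho_{c^\infty}$ to the congruence (and using $(c-1)^{n-1}c^\infty=0$) forces $d(mc^\infty)=mc^\infty$ in $V_{[c^\infty]}$. The key subclaim is that $V_{[c^\infty]}$ has no nonzero $d$-invariants: unwinding $dy=y$ coefficient-by-coefficient shows that any such $y$ must be supported on $d^\infty$, and $d^\infty\notin[c^\infty]$ because $d^\infty\sim c^\infty$ would force $d$ to be a cyclic shift of $c$. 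Hence $mc^\infty=0$, so by Corollary~\ref{Vcinftyisotoquotient} we may write $m=m_1(c-1)$; cancelling $c-1$ on the right via Lemma~\ref{lemma:riassunto1}(1) then reduces the congruence to the case $n-1$, contradicting the inductive hypothesis.

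For the base case $n=1$ one needs $\beta c^\infty\notin(d-1)V_{[c^\infty]}$. I would produce a $K$-linear functional $\phi\colon V_{[c^\infty]}\to K$ that annihilates $(d-1)V_{[c^\infty]}$ but is nonzero on $\beta c^\infty$. Namely, strip off the maximal number of initial copies of $d$ from $\beta c^\infty$ to obtain a base infinite path $q^*\in[c^\infty]$ that does not begin with $d$ (the process terminates precisely because $d^\infty\notin[c^\infty]$), and set $\phi(\sum k_p p):=\sum_{k\ge 0}k_{d^kq^*}$. A direct coefficient check shows $\phi\circ(d-1)=0$: the weights $\lambda_p$ defining $\phi$ vanish when $s(p_1)\ne r(d)$ and satisfy $\lambda_p=\lambda_{dp}$ when $s(p_1)=r(d)$, which is exactly the condition for $\phi$ to kill the image of $d-1$. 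The main obstacle I anticipate is the case analysis needed to set up $q^*$ correctly when $\beta c^\infty$ itself begins with $d$ — a possibility that can occur when $s(d)=v$ and $c$ has $d$ as an initial segment — together with the parallel care needed to verify that the backward stripping procedure always terminates.
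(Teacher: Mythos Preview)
Your argument is correct and takes a genuinely different route from the paper's. The paper's proof is a three-line homological argument: it invokes \cite[Theorem~3.10]{AMT1} to obtain $\Ext^1(V_{[d^\infty]},V_{[c^\infty]})\neq 0$, then feeds the short exact sequence $0\to V_{[c^\infty]}\to U_{E,c-1}\to U_{E,c-1}\to 0$ of Remark~\ref{rem:quotientofU} into the long exact $\Ext$ sequence, using $\Hom(V_{[d^\infty]},U_{E,c-1})=0$ (non-isomorphic simples) to push the nonvanishing up to $\Ext^1(V_{[d^\infty]},U_{E,c-1})$. Your approach instead unwinds this homological machinery by hand: the identification $\Ext^1(V_{[d^\infty]},U)\cong U/(d-1)U$ replaces the black-box citation, your ``no $d$-invariants in $V_{[c^\infty]}$'' claim is exactly the concrete content of $\Hom(V_{[d^\infty]},V_{[c^\infty]})=0$, and your level-by-level induction is what the long exact sequence does abstractly. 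The base case with the telescoping functional $\phi$ is effectively a direct proof of the piece of \cite[Theorem~3.10]{AMT1} that the paper quotes. So your proof is self-contained and more elementary, at the cost of being longer; the paper's proof is cleaner but outsources the key computation.

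Two small remarks. First, the condition ``first edge of $\beta$ differs from the first edge of $d$'' may fail (e.g.\ if $s(d)$ has a unique outgoing edge), but you do not actually need it: your stripping procedure already handles the case where $\beta c^\infty$ begins with copies of $d$, and termination follows from $d^\infty\notin[c^\infty]$ exactly as you say. You may simply take $\beta$ to be any path from $s(d)$ to $v$. Second, both your argument and the paper's silently require that $d$ is not a cyclic shift of $c$ (equivalently $[d^\infty]\neq[c^\infty]$); this is the intended reading of the hypothesis in light of Definition~\ref{defi:maximalcycle}, and your Fine--Wilf style observation that $d^\infty\sim c^\infty$ forces $d$ to be a cyclic shift of $c$ is precisely what is needed.
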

\begin{proof}
%As $E$ is finite,  the condition yields that there necessarily exists a cycle  $d$ in $E$ and a path $\beta$ for which $d \beta$ belongs to $A_c$ for any $i\geq 1$. Therefore is a path in $E$, i.e., for which $d \beta c \neq 0$ in $L_K(E)$. 
%  such that $d'gc\neq 0$ (for some real path $g$).  
 % and 
 % \bigskip
% need to change this notation   Quit here Friday December 2
 %$s(d')\in U(V_{[c^{\infty}]})$.
 % \bigskip
 %
 The set 
 %$U(V_{[c^\infty]})$ 
 of those vertices of $E$ which are connected to $v$ contains the source of $d$. Therefore by \cite[Theorem~3.10]{AMT1},  $\Ext^1(V_{[{d}^{\infty}]}, V_{[c^{\infty}]})\neq 0$.  \textcolor{black}{Utilizing Remark~\ref{rem:quotientofU}, we get the exact sequence 
\[\xymatrix{0\ar[r] \ & \ V_{[c^{\infty}]}\cong L_K(E)\alpha_1               \ar@{^(->}[r]                  \ & \ \Uni E{c-1}       \ar@{->>}[r]        \ & \  \Uni E{c-1} /  L_K(E)\alpha_1 \cong     \Uni E{c-1} \ar[r] \ & \ 0 \ .}\]} 
\noindent
We have  $\Hom( V_{[{d}^{\infty}]}, \Uni E{c-1})=0$, because   the only simple submodule of $\Uni E{c-1}$ is isomorphic to $V_{[c^{\infty}]} \not\cong V_{[{d}^{\infty}]}$ (see Section~\ref{section:Chen}).  
%
%\bigskip
%
%{\bf A/F:   we should quote some result about why those are not isomorphic.}
%
%\bigskip
%
Consequently, the standard long exact sequence for $\Ext^1$ gives that $\Ext^1(V_{[{d}^{\infty}]}, \Uni E{c-1})\neq 0$, so that $\Uni E{c-1}$ is not injective, as claimed. 
\end{proof}

\begin{example}\label{VcinftyinR2notinjective}
{\rm 
\begin{enumerate}
\item Let $E = R_n$ be the graph with one vertex and $n$ loops.
%\begin{enumerate}
%\item If $n=1$, then the  Pr\"ufer module $\Uni E{c-1}=\varinjlim K[x,x^{-1}]/\langle(x-1)^n\rangle$ is the injective envelope of the simple $K[x,x^{-1}]$-module $K[x,x^{-1}]/\langle x-1\rangle\cong K$.
%\item 
If $n\geq 2$, then for any basic closed path $c$ the  Pr\"ufer module $\Uni E{c-1}$ is not injective. Indeed we can always find a loop different from $c$ which connects to $s(c)$.
\item If $c$ is a basic closed path which is not a cycle, then the  Pr\"ufer module $\Uni E{c-1}$ is not injective. Indeed 
there exists a cycle $d$ such that $c=\alpha d \beta$ with $\alpha$, $\beta\in \text{Path}(E)$, and at least one of $\alpha, \beta$ is not a vertex.  Clearly $d$ is connected to $s(c)$.
%
%{\bf XXX The previous statement needs more thought, it is likely true but the proof here assumes that $\beta$ is not a vertex.}
%\textcolor{red}{No; if $\beta$ is a vertex, then $r(d)=r(c)=s(c)$ and hence $d$ connects to $s(c)$.}
%
%
\end{enumerate}
}
\end{example}

%\begin{remark}
%If $c$ is a simple closed path which is not a cycle, then the  Pr\"ufer module $\Uni E{c-1}$ is not injective. Indeed if $c=e_1\cdots e_h\cdots e_j\cdots e_n$ with $s(e_h)=s(e_j)$, then $\alpha_m:=e_1\cdots (e_h\cdots e_{j-1})^m e_j\cdots e_n$ belongs to $A_c$  and $\alpha_mc\not=0$ for each $m\geq 2$.
%\end{remark}

By (2) of the previous example, it remains to study the injectivity of the Pr\"ufer modules associated to cycles. Suggested by notation used in \cite{AraRanga}, we give the following.  
\begin{definition}\label{defi:maximalcycle}
{\rm Let $E$ be a finite directed graph. A cycle $c$ based at $s(c)=v$  is said to be \emph{maximal} if there are no cycles in $E$ other than cyclic shifts of  $c$ which connect to $v$.}
\end{definition}
%
%{\bf XXX We should be clearer on what a cycle ``different from $c$" means.  We can include that language in the Section where we give the definitions.}
%\textcolor{red}{This is the language we used in the introduction to LPA in Section 3}
%
%%
%
In particular any source cycle is maximal.
We are now in position to state the main result of the article, 
which  characterizes when the Pr\"ufer module $\Uni E{c-1}$ is injective solely in terms of how the cycle $c$ sits in the graph $E$.

\begin{theorem}\label{thm:injective}
Let $E$ be a finite graph  and let $c$ be a basic closed path in $E$. Let $\Uni E{c-1}$ be the Pr\"ufer module associated to $c$.
Then $\Uni E{c-1}$ is injective if and only if $c$ is a maximal cycle.
%\begin{enumerate}
%\item If $c$ is a  cycle such that $\alpha c\neq 0$  for at most a finite number of distinct  paths $\alpha \in A_c$, then $\Uni E{c-1}$ is injective.  In particular, if $c$ is a source cycle, then $\Uni E{c-1}$ is injective.
%\item If $c$ is a  cycle such that $\alpha c\neq 0$  for  infinitely many  distinct  paths $\alpha \in A_c$, then $\Uni E{c-1}$ is not injective.
%\end{enumerate}

In case $\Uni E{c-1}$ is injective, then 

(1)  $\Uni E{c-1}$ is the injective envelope of the Chen simple module $V_{[c^{\infty}]}$, and 

(2)  ${\rm End}_{L_K(E)}(\Uni E{c-1})$ is isomorphic to the ring $K[[x]]$ of formal power series in $x$.
\end{theorem}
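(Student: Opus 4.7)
My overall plan is to split the biconditional into its two directions and then deduce the additional conclusions (1) and (2) in the injective case. The ``only if'' direction follows at once from results already in the excerpt, while the ``if'' direction is the substantial one and I would attack it by reducing, via two Morita equivalences, to a setting where the injectivity of the Pr\"ufer module is the classical fact about Pr\"ufer modules over a Laurent polynomial ring.

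For the necessity direction I would argue contrapositively. If $c$ is a basic closed path that is not a cycle, then writing $c = \alpha d \beta$ with $d$ a cycle properly contained in $c$ shows that $d$ connects to $s(c)$, so Example~\ref{VcinftyinR2notinjective}(2) (or directly Proposition~\ref{Ucnotpureinjective}) yields non-injectivity. If $c$ is a cycle but not maximal, then by Definition~\ref{defi:maximalcycle} there is a cycle $d$, not a cyclic shift of $c$, connecting to $s(c)$, and Proposition~\ref{Ucnotpureinjective} directly shows $\Ext^1(V_{[d^\infty]}, \Uni E{c-1}) \neq 0$, whence $\Uni E{c-1}$ is not injective.

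For the sufficiency direction, assume $c$ is a maximal cycle. My strategy is a two-stage Morita reduction. In Section~\ref{Sectiongeneraltosourcecycle} I would exhibit a graph $E'$ Morita equivalent to $E$ in which the image of $c$ is a source cycle --- informally, trimming away the parts of $E$ that are irrelevant to $\Uni E{c-1}$ while keeping $L_K(E')$ Morita equivalent to $L_K(E)$. Since Morita equivalences preserve injectivity and commute with direct limits, this transports the Pr\"ufer module and the injectivity question faithfully. In Section~\ref{Sectionsourcecycletosourceloop} I would then apply Proposition~\ref{prop:Moritaeq} with $a = c-1$ and $\varepsilon = s(c)$: because $c$ is a source cycle we have $\varepsilon c = c\varepsilon = c$, whence $(c-1)(1-\varepsilon) = -(1-\varepsilon)$, so both hypotheses of the proposition hold with central invertible $u = -1$, and the Pr\"ufer module is carried to the corresponding one over the corner $S = \varepsilon L_K(E')\varepsilon$. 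The main technical obstacle I anticipate is arranging these two Morita data compatibly and verifying the conditions $R = R\varepsilon R$ at each stage; once in the source-loop setting, the injectivity is the classical statement that the $(x-1)$-adic Pr\"ufer module over $K[x,x^{-1}]$ is the injective hull of $K[x,x^{-1}]/(x-1)$, provable via Baer's criterion since $L_K(E)$ is B\'ezout (so only principal left ideals need be tested) and since Remark~\ref{remark:levels} combined with the Division Algorithm of Theorem~\ref{thm:division} supplies the required lifts.

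Parts (1) and (2) then follow cleanly. For (1): by Theorem~\ref{Pruferisuniserialandartinian}, $\Uni E{c-1}$ is uniserial with simple socle $V_{[c^\infty]}$ and is therefore an essential extension of $V_{[c^\infty]}$; any injective essential extension of a module is its injective envelope, proving (1). For (2): each $f \in \End_{L_K(E)}(\Uni E{c-1})$ is determined by the sequence $(f(\alpha_n))_{n \ge 1}$, and the relations $(c-1)\alpha_1 = 0$ and $(c-1)\alpha_n = \alpha_{n-1}$ for $n \ge 2$ force $(f(\alpha_n))_n$ to be a coherent system under multiplication by $c-1$. Using the $G$-representation of Proposition~\ref{prop:Grep} to expand each $f(\alpha_n)$ uniquely in powers of $c-1$, these compatibility conditions translate into a unique formal power series $\sum_{i \ge 0} k_i (c-1)^i$, and the resulting map $\End_{L_K(E)}(\Uni E{c-1}) \to K[[x]]$, $x \mapsto c-1$, is routinely checked to be a ring isomorphism (equivalently, after the two Morita reductions one recognises this as the $(x-1)$-adic completion of $K[x,x^{-1}]$, in perfect analogy with $\End_{\mathbb Z}(\mathbb Z(p^\infty)) \cong \mathbb Z_p$).
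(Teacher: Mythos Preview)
Your necessity argument and your direct treatment of (1) and (2) are fine and match the paper. The gap is in the sufficiency direction: you misidentify the target of the Morita reduction. The second reduction in the paper uses the idempotent $\varepsilon = \sum_{u \in E^0 \setminus \{v_2,\dots,v_n\}} u$, not $\varepsilon = s(c)$; with your choice the hypothesis $R = R\varepsilon R$ of Proposition~\ref{prop:Moritaeq} generally fails (any vertex outside the hereditary saturated closure of $s(c)$ obstructs it). More importantly, even with the correct idempotent the reduction lands in $L_K(F)$ for a graph $F$ that retains \emph{all} vertices of $E$ outside the cycle and merely collapses the source cycle to a source \emph{loop} $d$. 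The resulting algebra is not $K[x,x^{-1}]$ unless $E$ was essentially just the cycle to begin with, and there is no further Morita step that gets you there. So the ``classical'' injectivity of the $(x-1)$-adic Pr\"ufer module over $K[x,x^{-1}]$ is simply unavailable; the substantive work is to prove injectivity of $\Uni F{d-1}$ over an arbitrary Leavitt path algebra with a source loop.

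That proof needs two ingredients you have not supplied. First, B\'ezout only makes \emph{finitely generated} left ideals principal; Baer's criterion requires all left ideals, and $L_K(F)$ is not noetherian. The paper closes this gap by proving $\Uni F{d-1}$ is pure-injective (it is artinian, hence linearly compact, over $\End(\Uni F{d-1})$ --- this is where the computation $\End\cong K[[x]]$ is actually \emph{used}, not merely recorded), so that $\Ext^1(-,\Uni F{d-1})$ sends direct limits to inverse limits and one may reduce to principal ideals. Second, even for a principal ideal $L_K(F)\ell$ the lifting is not just Remark~\ref{remark:levels} plus the Division Algorithm: one must split into the cases $\ell \notin \Ann_{L_K(F)}(\Uni F{d-1})$ (handled by a divisibility argument, Proposition~\ref{prop:baer}) and $\ell \in \Ann_{L_K(F)}(\Uni F{d-1}) = \langle F^0\setminus\{s(d)\}\rangle$ (where one shows any homomorphism is forced to be zero, via Proposition~\ref{C*annih} and Corollary~\ref{cstarunonzero}). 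None of this collapses to a classical PID computation.
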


The proof of one direction of Theorem \ref{thm:injective} has already been established:  if $c$ is not a maximal cycle then $\Uni E{c-1}$ is not injective by Proposition~\ref{Ucnotpureinjective} (see also Example~\ref{VcinftyinR2notinjective}(2)). Establishing the converse implication will be a more difficult task, and will take up the remainder of this article.   
%We first reduce ourselves 
The strategy is to start by reducing  to the case when $c$ is a source loop, and then subsequently prove the result in this somewhat more manageable configuration.

%%%%%%%%%%%%%%%%%%%%%%%%%%%%%%%%%%%%%%%%%%%%%%%%%%%%%%
%%%%%%%%%%%%%%%%%%%%%%%%%%%%%%%%%%%%%%%%%%%%%%%%%%%%%%
%%%%%%%%%%%%%%%%%%%%%%%%%%%%%%%%%%%%%%%%%%%%%%%%%%%%%%
%%%%%%%%%%%%%%%%%%%%%%%%%%%%%%%%%%%%%%%%%%%%%%%%%%%%%%
% 
%   When c is a source loop section
%
%%%%%%%%%%%%%%%%%%%%%%%%%%%%%%%%%%%%%%%%%%%%%%%%%%%%%%
%%%%%%%%%%%%%%%%%%%%%%%%%%%%%%%%%%%%%%%%%%%%%%%%%%%%%%
%%%%%%%%%%%%%%%%%%%%%%%%%%%%%%%%%%%%%%%%%%%%%%%%%%%%%%

\section{Reduction from the general case to the source loop case}\label{Sectiongeneraltosourcecycle}

%\section{When $c$ is a  cycle with finitely many distinct paths entering on it}

We  assume now that  $L_K(E)$ is the Leavitt path algebra of a finite graph $E$ which contains a maximal cycle $c$ based at $v$. Then, as noted in Remark~\ref{rem:Gincasecissource},  $A_c$ is a finite set.  
We show that  we can reduce to the case where $c$ is a source cycle (i.e., $c$ is a cycle for which $A_c = \emptyset$).  

%\bigskip
%
%{\bf A/F:   Maybe we should make this clearer near the beginning of the article?  }
%
%\bigskip
%
%
%
%{\bf A/F:   Maybe we can add some comment here regarding the Prufer module for $K[x,x^{-1}]$?}
%
%\bigskip

% and we reduce to the case in which $c$ is a source cycle.
%   We claim that the $c^{\infty}$-Pr\"ufer module is injective.
Let $z\in E^0$ be a source vertex which is the source of a path entering on the cycle $c$; set $\varepsilon:=1-z$.  By \cite[Lemma 4.3]{AraRanga}, the Leavitt path algebras $L_K(E)$ and $S =\varepsilon L_K(E)\varepsilon \cong  L_K(E\setminus z)$ are Morita equivalent.  Note that $c$ is a cycle in the graph $E\setminus z$.
%, and in particular $c-1 := c- 1_{L_K(E\setminus z)}$ is in $S$.    
Since \begin{enumerate}
\item $c-1$ is neither  a right zero divisor nor left invertible in $L_K(E \setminus z)$, 
\item $(c-1)\varepsilon=\varepsilon(c-1)$, and
\item $(c-1)(1-\varepsilon)=-(1-\varepsilon)$,
\end{enumerate}
we can apply Proposition~\ref{prop:Moritaeq} to yield 
 % the uniserial left $L_K(E)$-module $\Muni nE{c-1}$ corresponds in the Morita equivalence to $\Muni nF{c-1}$. Since Morita equivalence respects direct limits, 
 that the Pr\"ufer $L_K(E)$-module $\Uni E{c-1}=\varinjlim_n \Muni nE{c-1}$ corresponds under the equivalence  to the Pr\"ufer $L_K({E\setminus z})$-module $\Uni {E\setminus z}{c-1}=\varinjlim_n \Muni n{E\setminus z}{c-1}$.   Moreover,  the original Pr\"ufer $L_K(E)$-module $\Uni E{c-1}$ is the injective envelope of the Chen simple $L_K(E)$-module $V_{[c^{\infty}]}$
if and only if the Pr\"ufer $L_K({E\setminus z})$-module $\Uni {E\setminus z}{c-1}$ is the injective envelope of the Chen simple $L_K({E\setminus z})$-module $V_{[c^{\infty}]}$.    

Thus by means of a finite number of ''source eliminations" we then may reduce $E$ to a subgraph which contains $c$, and in which $c$ is a source cycle, for which the Pr\"ufer modules correspond. 

The second step is to show that we can indeed further reduce to the case in which $c$ is a source loop. Assume $L_K(E)$ is a Leavitt path algebra with a source cycle $c$ based on the  vertex $v$. Assume $c$ has length $>1$ (i.e., that $c$ is not a source loop). Let $v:=v_1, v_2,..., v_n$ be the vertices of the cycle $c$ and $U=E^{0}\setminus \{v_2,..., v_n\}$. Consider the idempotent $\varepsilon:=\sum_{u\in U} u$. As proved in \cite[Lemma 4.4]{AraRanga}, $L_K(E)\varepsilon L_K(E)=L_K(E)$ and therefore $L_K(E)$ is Morita equivalent to $S:=\varepsilon L_K(E)\varepsilon$. 
%Let $F$ be the graph $(F^0,F^1)$ defined by:
%\begin{itemize}
%\item $F^0=E^0\setminus\{v_2,...,v_n\}$;
%\item $s_F^{-1}(w)=s_E^{-1}(w)$ for each $w\not=v$;
%\item $s_F^{-1}(v)= \{d\} \cup\displaystyle\bigcup_{i=1}^n\{f_g:g\in s_E^{-1}(v_i), r(g)\notin\{v_1,...,v_n\}\}$ where $d$ is a loop with $r(d)=v$ and the $f_g$'s are new edges with $r(f_g)=r(g)$.
%\end{itemize}
%Then, as described in \cite{AraRanga}, the map $\theta:L_K(F)\to L_K(E)$ defined by
%\begin{itemize}
%\item $\theta(w)=w$ for each $w\in F^0$,
%\item $\theta(e)=e$ for all $e$ with $s(e)\in F^0\setminus\{v\}$,
%\item $\theta(f_g)=e_1\cdots e_{i-1}g$ for each $g\in s_E^{-1}(v_i)$,
%\item $\theta(d)=e_1\cdots e_n=c$,
%\end{itemize}
%defines an isomorphism between $L_K(F)$ and the corner $S=\varepsilon L_K(E)\varepsilon$. Therefore let us prove that $\Muni nS{\varepsilon(c-1)}$ as $L_K(F)$-module is isomorphic to $\Muni nF{(d-1)}$.
Since 
\begin{enumerate}
\item $c-1$ is neither  a right zero divisor nor left invertible,
\item $(c-1)\varepsilon=\varepsilon(c-1)$, and 
\item $(c-1)(1-\varepsilon)=-(1-\varepsilon)$,
\end{enumerate}
by Proposition~\ref{prop:Moritaeq} the uniserial left $L_K(E)$-module $\Muni nE{c-1}$ corresponds in the Morita equivalence to $\Muni nS{\varepsilon(c-1)}$.

Let $F$ be the graph $(F^0,F^1)$ defined by:
\begin{itemize}
\item $F^0=E^0\setminus\{v_2,...,v_n\}$;
\item $s_F^{-1}(w)=s_E^{-1}(w)$ for each $w\not=v$;
\item $s_F^{-1}(v)= \{d\} \cup\displaystyle\bigcup_{i=1}^n\{f_g:g\in s_E^{-1}(v_i), r(g)\notin\{v_1,...,v_n\}\}$ where $d$ is a loop with $r(d)=v$ and the $f_g$'s are new edges with $r(f_g)=r(g)$.
\end{itemize}
Then, as described in \cite{AraRanga}, the map $\theta:L_K(F)\to L_K(E)$ defined by
\begin{itemize}
\item $\theta(w)=w$ for each $w\in F^0$,
\item $\theta(e)=e$ for all $e$ with $s(e)\in F^0\setminus\{v\}$,
\item $\theta(f_g)=e_1\cdots e_{i-1}g$ for each $g\in s_E^{-1}(v_i)$,
\item $\theta(d)=e_1\cdots e_n=c$,
\end{itemize}
defines an isomorphism between $L_K(F)$ and the corner $S=\varepsilon L_K(E)\varepsilon$. 

We now show that the left $L_K(F)$-modules $\Muni nS{\varepsilon(c-1)}$ and $\Muni nF{(d-1)}$ are isomorphic. Indeed, by Remark~\ref{rem:Gincasecissource} and Proposition~\ref{prop:Grep}, any element $x$ in $\Muni nE{c-1}$ can be written in a unique way as
\[x=g_1+g_2(c-1)+\cdots+g_n(c-1)^{n-1}+L_K(E)(c-1)^n,\]
with $g_j=k_j1_{L_K(E)}+t_{j,1}$ where $k_i\in K$ and $t_{j,1}$ is a $K$-linear combination of the paths $e_2\cdots e_n,..., e_{n-1}e_n, e_n$. Therefore, since $\varepsilon e_ie_{i+1}\cdots e_n=0$ for each $i>1$, the elements of $\Hom_{L_K(E)}(L_K(E)\varepsilon, \Muni nE{c-1})=\varepsilon \Muni nE{c-1}\cong \Muni nS{\varepsilon(c-1)}$ are of the type 
\[k_1\varepsilon+k_2\varepsilon(c-1)+\cdots+ k_n\varepsilon(c-1)^{n-1}+\varepsilon L_K(E)(c-1)^n\]
with $k_1,...,k_n\in K$. 
%Since $c$ is a source cycle in $L_K(E)$, we have $\varepsilon L_K(E)(c-1)^n=\varepsilon L_K(E)\varepsilon (c-1)^n$ and hence 
%\[k_1\varepsilon+k_2\varepsilon(c-1)+\cdots+ k_n\varepsilon(c-1)^n+\varepsilon L_K(E)(c-1)^n=k_1\varepsilon+k_2\varepsilon(c-1)+\cdots+ k_n\varepsilon(c-1)^n+S(c-1)^n.\]
%Era FALSO: \varepsilon e_1(c-1)^n\in \varepsilon L_K(E)(c-1)^n ma non a \varepsilon L_K(E)\varepsilon (c-1)^n
Since  
$$k_1\varepsilon+k_2\varepsilon(c-1)+\cdots+ k_n\varepsilon(c-1)^n=\theta\big(k_11_{L_K(F)}+k_2(d-1_{L_K(F)})+\cdots+ k_n(d-1_{L_K(F)})^n\big),$$
the $L_K(F)$-module $\Muni nS{\varepsilon(c-1)}$ coincides with $\Muni nF{d-1}$.
Since Morita equivalence respects direct limits, the Pr\"ufer module $\Uni E{c-1}=\varinjlim_n \Muni nE{c-1}$ corresponds to the Pr\"ufer module $\Uni F{d-1}=\varinjlim_n \Muni nF{d-1}$. 
Moreover, the Pr\"ufer $L_K(E)$-module $\Uni E{c-1}$ is the injective envelope of the Chen simple $L_K(E)$-module $V_{[c^{\infty}]}$
if and only if the Pr\"ufer $L_K(F)$-module $\Uni F{d-1}$ is the injective envelope of the Chen simple $L_K(F)$-module $V_{[d^{\infty}]}$.

Finally, since corresponding modules in a Morita equivalence have the same endomorphism ring, summarizing the discussion of this section, we have obtained the following.

\begin{proposition}\label{reducetosourceloop}
In order to establish Theorem \ref{thm:injective}, it suffices to prove that, whenever $c$ is a source loop in $E$, then 

(1)  $U_{E,c-1}$ is injective, and 

(2)  ${\rm End}_{L_K(E)}(U_{E,c-1}) \cong K[[x]]$.

\end{proposition}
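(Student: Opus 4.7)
The plan is to observe that \emph{Proposition~\ref{reducetosourceloop} is a direct bookkeeping consequence of the two Morita reductions just developed} in this section, combined with the standard fact that Morita equivalences preserve injectivity, injective envelopes, and endomorphism rings. I would not try to reprove anything; I would merely assemble the pieces already in hand.

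First I would chain the two reductions. Starting from an arbitrary finite graph $E$ containing a maximal cycle $c$ based at $v$, apply the ``source vertex removal'' reduction iteratively: as long as some source vertex $z$ of $E$ feeds (via a path) into the cycle $c$, the idempotent $\varepsilon = 1-z$ satisfies the hypotheses $(c-1)\varepsilon = \varepsilon(c-1)$ and $(c-1)(1-\varepsilon) = -(1-\varepsilon)$, so Proposition~\ref{prop:Moritaeq} produces a Morita equivalence $L_K(E) \sim L_K(E\setminus z)$ sending $\Uni{E}{c-1}$ to $\Uni{E\setminus z}{c-1}$ and $V_{[c^\infty]}$ to $V_{[c^\infty]}$ over $L_K(E\setminus z)$. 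Since $E$ is finite, this terminates at a subgraph (still called $E$) in which $c$ is a \emph{source cycle}. Next, if $c = e_1\cdots e_n$ with $n>1$, apply the ``cycle collapse'' reduction: with $U = E^0\setminus\{v_2,\ldots,v_n\}$ and $\varepsilon = \sum_{u\in U}u$, the isomorphism $\theta:L_K(F)\to\varepsilon L_K(E)\varepsilon$ identifies the corresponding uniserial and Pr\"ufer modules, and in $F$ the cycle $c$ has become the source loop $d$.

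Next I would invoke the three general properties of Morita equivalence needed for the conclusion: an $R$-module $M$ is injective if and only if its image under a Morita equivalence $R\lMod \to S\lMod$ is injective; this equivalence preserves essential extensions and simple modules, so it sends injective envelopes to injective envelopes; and corresponding objects have isomorphic endomorphism rings. Combining with the chain above, if the conclusions (1) and (2) of Theorem~\ref{thm:injective} hold for $\Uni{F}{d-1}$ with $d$ a source loop, then $\Uni{E}{c-1}$ is injective, is the injective envelope of $V_{[c^\infty]}$ over $L_K(E)$, and has endomorphism ring $\cong K[[x]]$. Together with the already-established forward direction (Proposition~\ref{Ucnotpureinjective}), this yields Theorem~\ref{thm:injective} in full.

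The main (and only) obstacle is a small verification that the iterated source-removal process indeed terminates at a graph in which $c$ is a source cycle; this follows because each removal strictly decreases $|E^0|$ and, among vertices not on $c$, maximality of $c$ prevents the creation of any new cycle reaching $v$, so the process halts exactly when no non-cycle vertex connects to $v$, which is the definition of $c$ being a source cycle. Once this is noted, the proof is purely formal.
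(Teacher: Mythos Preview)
Your proposal is correct and follows essentially the same route as the paper: the proposition is stated in the paper as a summary of the two Morita reductions (source-vertex elimination followed by cycle collapse to a loop) developed in that section, combined with the standard preservation properties of Morita equivalence. Your termination remark for the source-elimination step is a reasonable explicit articulation of what the paper leaves implicit, namely that maximality of $c$ forces the off-cycle vertices feeding into $c$ to form an acyclic finite subgraph, so a source of $E$ feeding into $c$ always exists until $c$ becomes a source cycle.
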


%%%%%%%%%%%%%%%%%%%%%%%%%%%%
%
%
%%%%%\section{When $c$ is a source loop}
%
%
%%%%%%%%%%%%%%%%%%%%%%%%%%%%%

\medskip

\section{Establishing the main result:  the case when $c$ is a source loop}\label{Sectionsourcecycletosourceloop}
Having in the previous section reduced the verification of Theorem \ref{thm:injective} to the case where $c$ is a source loop, our aim in this section is to establish precisely that.   

 So suppose  $E$ is a graph in which there is a source loop $c$   based at the  vertex $v = s(c)$.
%Consider the infinite rational path $c^{\infty}$. Notice that i
In this case the  Chen simple module $V_{[c^{\infty}]}$  has $K$-dimension $1$, i.e $V_{[c^{\infty}]}=\{kc^{\infty} \ | \ k\in K\}$; moreover $A_c = \emptyset$, 
%\[A_c=\{\alpha\in\text{Path}(E): \alpha\text{ has length }\geq 1, \beta c\not=\alpha\not=c\beta\quad\text{for any real path $\beta$}, \text{ and }\alpha c\not=0\}=\emptyset,\]
and hence $G$ is the $K$-vector subspace of $L_K(E)$ generated by $1_{L_K(E)}$  (recall Definition~\ref{AcDefinition}).  
%Therefore  by Proposition \ref{prop:Grep},   any element in the  Leavitt path algebra $L_K(E)$ can be written, for a fixed $n\in\mathbb N$, in a unique way as
%\[h_1+h_2(c-1)+\cdots+h_{n}(c-1)^{n-1}+q_n(c-1)^n\]
%with $h_i\in K$ for $i=1,...,n$ and $q_n\in L_K(E)$.
By Proposition~\ref{prop:Grep} every  element of $\Muni nE{c-1}$ can be written 
in a unique way as
$$x = k_1+k_2(c-1)+\cdots+k_n(c-1)^{n-1}+L_K(E)(c-1)^n$$ with the $k_i$'s belonging to $K$. Therefore the elements of $\Uni E{c-1}$ can be written 
in a unique way as $K$-linear combinations of the $\alpha_i=\psi_{E,i}\big(1+L_K(E)(c-1)^i\big)$, $i\geq 1$.

Intuitively, the reason that reduction to the source loop case will provide a more manageable situation than the general case is  because the coefficients on each of the $(c-1)^i$ terms in the previous display come from $K$ (since $G = K$ in this case), and as such these coefficients  are central in $L_K(E)$.  
\begin{proposition}\label{prop:J}
Let $c$ be a source loop in $E$.   Then %we have 
%the following equalities of two-sided ideals.  
\begin{enumerate}
\item $L_K(E)(c-1)^n$ is the two-sided ideal $\Ann_{L_K(E)}(\Muni nE{c-1})$.
\item The left $L_K(E)$-module $\Muni nE{c-1}$ is also a right $L_K(E)$-module,  and
\[rm=mr\quad \forall r\in L_K(E), m\in \Muni nE{c-1}.\]
Thus the maps
$\psi_{E,i,j}:\Muni iE{c-1} \to \Muni jE{c-1}$ are also right $L_K(E)$-module monomorphisms for any $1\leq i\leq j$.
\item \textcolor{black}{$\Ann_{L_K(E)}(\Uni E{c-1})=\bigcap_{n\geq 1}L_K(E)(c-1)^n $.  and it coincides with the two-sided ideal $ \langle E^0\setminus \{s(c)\} \rangle $;} 
%\item $u\in \Uni E{c-1}$ belongs to $L_K(E)\alpha_i$ if and only if $(c-1)^iu=0$.
\item  The left $L_K(E)$-module $\Uni E{c-1}$ is also a right $L_K(E)$-module and $\psi_{E,n}:\Muni nE{c-1}\to \Uni E{c-1}$ is  a right $L_K(E)$-module monomorphism. Moreover $r\alpha_i=\alpha_ir$ for each $r\in L_K(E)$ and  $i\geq 1$.
\item $u\in \Uni E{c-1}$ belongs to $L_K(E)\alpha_i$ if and only if $(c-1)^iu=0$.
%
%
%\item $\Ann_{L_K(E)}(\Uni E{c-1})=\bigcap_{n\geq 1}L_K(E)(c-1)^n = \langle E^0\setminus \{s(c)\} \rangle $ (the two-sided ideal of $L_K(E)$ generated by $E^0\setminus \{s(c)\}$);   
%\item $u\in \Uni E{c-1}$ belongs to $L_K(E)\alpha_i$ if and only if $(c-1)^iu=0$;
%\item $\Muni nE{c-1}$ is an $L_K(E)$-bimodule and the maps
%$\psi_{E,i,j}:\Muni iE{c-1} \to \Muni jE{c-1}$ are $L_K(E)$-bimodule monomorphisms for any $1\leq i\leq j$;
%\item $\Uni E{c-1}$ is an $L_K(E)$-bimodule and $\psi_{E,n}:\Muni nE{c-1}\to \Uni E{c-1}$ is an $L_K(E)$-bimodule monomomorphism;
%%\item $\Ann_{L_K(E)}(\Muni nE{c-1})=L_K(E)(c-1)^n$.  
%\item $(c-1)\alpha_i=\alpha_i(c-1)$ for each $i\geq 1$.
%%   Moreover, this ideal  is generated as two sided ideal by $E^0\setminus \{s(c)\}$.
\end{enumerate}
%Moreover $u\in \Uni E{c-1}$ belongs to $L_K(E)\alpha_i$ if and only if $(c-1)^iu=0$.
\end{proposition}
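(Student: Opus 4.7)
The organizing observation is that when $c$ is a source loop at $v$, the quotient ring $R_n := L_K(E)/L_K(E)(c-1)^n$ is isomorphic to $K[x]/(x^n)$ via $\overline{c-1}\mapsto x$. Once this commutative local structure is in place, (2), (4), and (5) follow essentially formally, and (3) reduces to identification of the inverse limit $\varprojlim R_n \cong K[[x]]$. The main technical obstacle is (1): showing that $L_K(E)(c-1)^n$ is two-sided, because without this even the ring $R_n$ is not defined and there is no right $L_K(E)$-action on $\Muni nE{c-1}$ to speak of.

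\textbf{Part (1).} I would first prove $L_K(E)(c-1)$ is a right ideal (whence all $L_K(E)(c-1)^n$ are two-sided by induction on $n$) by verifying $(c-1)x \in L_K(E)(c-1)$ for each generator $x$ of $L_K(E)$; the collection of $y$ with this property is easily seen to form a unital subalgebra, so the verification on generators suffices. Since $c$ is a source loop, $c$ is the unique edge with range $v$, and short direct computations yield $w(c-1)=-w$ for every vertex $w\neq v$, $e(c-1)=-e$ for every edge $e\neq c$, and $e^*(c-1)=-e^*$ for every ghost edge $e^*$ with $e\neq c$, so all of these generators already lie in $L_K(E)(c-1)$. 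The remaining generators $v$, $c$, $c^*$ are handled individually: $(c-1)v=c-v$ is placed in the ideal using $1-v=(v-1)(c-1)$; $(c-1)c=c(c-1)$ is obvious; and the delicate case is $(c-1)c^* = cc^*-c^*$. For this last one the plan is to apply (CK2) at $v$ to rewrite $cc^* = v - \sum_{s(e)=v,\,e\neq c} ee^*$, note that each summand satisfies $ee^*(c-1)=-ee^*$ (using $e^*c=0$ for $e\neq c$), and combine with $c^*(c-1)=v-c^*$. The identification with $\Ann_{L_K(E)}(\Muni nE{c-1})$ is then immediate since $\Muni nE{c-1}$ is cyclic, generated by $\overline 1$.

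\textbf{Parts (2) and (4).} By the $G$-representation (Proposition~\ref{prop:Grep}) together with $G = K\cdot 1_{L_K(E)}$ in the source loop case (Remark~\ref{rem:Gincasecissource}), every element of $R_n$ has a unique expression $\sum_{i=1}^n k_i(c-1)^{i-1}$ with $k_i\in K$; since $K$ is central and $(c-1)$ commutes with itself, multiplication in $R_n$ agrees with that of $K[x]/(x^n)$, so $R_n$ is commutative. This yields $rm=mr$ directly, and the maps $\psi_{E,i,j}$---which amount to multiplication by the central element $\overline{(c-1)^{j-i}}$ in $R_j$---are automatically both left and right $L_K(E)$-homomorphisms. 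Part (4) then follows by passing to the direct limit (which preserves both the right module structure and injectivity of the transition maps), and $r\alpha_i = \alpha_i r$ is the bi-module property of $\psi_{E,i}$ applied to the generator $\overline 1 \mapsto \alpha_i$.

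\textbf{Parts (3) and (5).} For (3), $\Ann(\Uni E{c-1}) = \bigcap_n L_K(E)(c-1)^n$ is immediate from $\Uni E{c-1}=\varinjlim \Muni nE{c-1}$. Let $I := \langle E^0\setminus\{v\}\rangle$. The inclusion $I \subseteq \bigcap_n L_K(E)(c-1)^n$ follows from $w = (-1)^n w(c-1)^n$ for each $w\neq v$, together with (1). For the reverse inclusion I would first show $L_K(E)/I \cong K[c,c^{-1}]$: modulo $I$ every $ee^*$ with $e\neq c$ vanishes (since $r(e)\neq v$ lies in $I$), so (CK2) forces $cc^*=v$ and $c$ becomes invertible with inverse $c^*$. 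The canonical map $L_K(E)\to \varprojlim R_n \cong K[[x]]$ (sending $c\mapsto 1+x$) kills $I$, and the induced map $K[c,c^{-1}]\hookrightarrow K[[x]]$ is the classical injection of Laurent polynomials into formal power series; thus its kernel is precisely $I$. For (5), the forward direction is a direct induction using $(c-1)\alpha_k = \alpha_{k-1}$ with $\alpha_0 = 0$. The reverse direction follows from the identification $L_K(E)\alpha_j \cong K[x]/(x^j)$ with $\alpha_k \leftrightarrow x^{j-k}$: for $u\in L_K(E)\alpha_j$, the condition $(c-1)^i u = 0$ translates to $u$ lying in the kernel of multiplication by $x^i$ in $K[x]/(x^j)$, which is $(x^{\max(j-i,0)})$, corresponding to $L_K(E)\alpha_{\min(i,j)}\subseteq L_K(E)\alpha_i$.
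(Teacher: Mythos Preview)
Your argument is correct, and in two places it takes a genuinely different route from the paper.

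For (1), the paper never checks generators: it uses the $G$-representation (Proposition~\ref{prop:Grep} with $G=K$) to write an arbitrary $m\in\Muni nE{c-1}$ as $\sum h_i(c-1)^{i-1}+L_K(E)(c-1)^n$ with $h_i\in K$, then computes $r(c-1)^n m$ directly using centrality of the $h_i$. Your generator-by-generator verification that $L_K(E)(c-1)$ is two-sided is an independent argument; it has the advantage of being self-contained (no appeal to the Division Algorithm), at the cost of the case analysis. One small point: your sketch for $L_K(E)/I\cong K[c,c^{-1}]$ in (3) implicitly uses that $E^0\setminus\{v\}$ is hereditary and saturated (so that the quotient is the Leavitt path algebra of the single loop); this holds precisely because $c$ is a source loop, and is worth stating explicitly.

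For (3), the paper invokes Rangaswamy's classification of two-sided ideals of $L_K(E)$ to conclude that $\Ann(\Uni E{c-1})$ is generated by $E^0\setminus\{v\}$ together with at most one element of the form $s(c)+\sum k_ic^i$, and then rules out the latter by an explicit computation. Your approach---identifying $L_K(E)/I$ with $K[c,c^{-1}]$ and observing that the completion map $K[c,c^{-1}]\hookrightarrow K[[x]]$, $c\mapsto 1+x$, is injective---avoids the external structure theorem entirely and is arguably cleaner. Parts (2), (4), (5) are essentially the same in both treatments.
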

%
%\bigskip
%
%{\bf A/F:  maybe we can put some statement about why we expect this to happen, it is due to $c-1$ commuting with elements of $K$.}
%
%\bigskip
%
\begin{proof}
(1)   If $r\in \Ann_{L_K(E)}(\Muni nE{c-1})$, then $r(1+L_K(E)(c-1)^n)=0$ in $\Muni nE{c-1}$ and hence $r$ belongs to $L_K(E)(c-1)^n$. Conversely, let $r\in L_K(E)$ and $m\in \Muni nE{c-1}$. Since $m=h_1+h_2(c-1)+\cdots+h_n(c-1)^{n-1}+L_K(E)(c-1)^n$ where each $h_i \in K$ (using that $c$ is a source loop; see the previous observation), we get 
% {\bf XXX because we have a source loop}   we get
\begin{align*} 
r(c-1)^nm & =r(c-1)^n(h_1+h_2(c-1)+\cdots+h_n(c-1)^{n-1}+L_K(E)(c-1)^n) \\
 & =   h_1r(c-1)^n    +h_2r(c-1)^{n+1} +\cdots+h_nr(c-1)^{2n-1}+L_K(E)(c-1)^n \\
 & =     0
\end{align*}
in $\Muni nE{c-1}=L_K(E)/L_K(E)(c-1)^n$.  (The point here is that each $h_i$ commutes with expressions of the form $r(c-1)^j$ because $h_i\in K$.)  
% {\bf XXX We note that the $h_i$ commute with expressions of the form $r(c-1)^i$ because $h_i \in K$; this is where we are using that $c$ is a source LOOP.}     
Hence $L_K(E)(c-1)^n\leq \Ann_{L_K(E)}(\Muni nE{c-1})$.

\smallskip

(2)  Since $L_K(E)(c-1)^n$ is a two-sided ideal by point (1), then $\Muni n E{c-1}$ is also a right $L_K(E)$-module via the usual action. Let $r\in L_K(E)$ and $m\in \Muni n E{c-1}$; then
\[r=k_1+k_2(c-1)+\cdots+k_n(c-1)^{n-1}+r'(c-1)^n \ \ \ \mbox{and} \ \ \ 
m=h_1+\cdots+h_n(c-1)^{n-1}+L_K(E)(c-1)^n,\]
where $h_1,..., h_n,k_1,..., k_n\in K$ and $r'\in L_K(E)$. Since 
$L_K(E)(c-1)^n$ is a two-sided ideal we get $rm=mr$. The right $L_K(E)$-linearity of the maps $\psi_{E,i,j}$ for each $1\leq i\leq j$ follows easily.

\smallskip

(3)   Since $\Uni E{c-1}=\bigcup_{n\geq 1}L_K(E)\alpha_n$ and $L_K(E)\alpha_n\cong\Muni nE{c-1}$, the first equality follows from (1). \\   
For the second, we start by noting that $E^0\setminus \{s(c)\}$ is the set of the vertices contained in $\Ann_{L_K(E)}(\Uni E{c-1})$.  Indeed, $s(c)=1+(1-s(c))(c-1)$ does not belong to $L_K(E)(c-1)$, and hence neither to $\Ann_{L_K(E)}(\Uni E{c-1})$.  On the other hand, any vertex $w\not=s(c)$ belongs to $\bigcap_{n\geq 1}L_K(E)(c-1)^n$, because  the equality $w=-w(c-1)$ can be iterated to produce the sequence 
\[w=-w(c-1)=w(c-1)^2=\cdots=(-1)^nw(c-1)^n=\cdots .\]
In \cite[Theorem 4]{Ranga}, Rangaswamy proved that an arbitrary nonzero two sided ideal $I$ in $L_K(E)$ (for  $E$ a finite graph)  is generated by the union of two sets:
\begin{enumerate}
\item[(i)] $I\cap E^0$ (i.e., the vertices in $I$), together with
\item[(ii)] a (possibly empty) set of mutually orthogonal elements of the form $u+\sum_{i=1}^n k_ig^i$ where $u \in E^0\setminus I\cap E^0$, $k_1,...,k_n$ belong to $K$ with $k_n\not=0$, and $g$ is a cycle without exits in $E^0\setminus I\cap E^0$ based at the vertex $u$.
\end{enumerate}
In our case we have
\begin{itemize}
\item $\Ann_{L_K(E)}(\Uni E{c-1})\cap E^0=E^0\setminus\{s(c)\}$, and 
\item $c$ is a cycle in $E^0 \setminus (I \cap E^0) = \{s(c)\}$, and is the only cycle in the only cycle based in $s(c)$ (because $c$ is a source loop), and $c$ has no exits in $\{s(c)\}$ (because such an exit in $\{s(c)\}$ would necessarily be a second loop at $s(c)$, contrary to $c$ being a source loop).  
% without exits in $s(c)$.
\end{itemize}
Therefore $\Ann_{L_K(E)}(\Uni E{c-1})$ is generated by $E^0\setminus\{s(c)\}$ and possibly a single element  of the form $s(c)+\sum_{i=1}^n k_ic^i$ with $k_n\not=0$.
%So the desired result will follow by  proving that $\Ann_{L_K(E)}(\Uni E{c-1})$ does not contain any element of the form $s(c)+\sum_{i=1}^n k_ic^i$, $k_n\not=0$. 
Assume that $s(c)+\sum_{i=1}^n k_ic^i \in \Ann_{L_K(E)}(\Uni E{c-1})$ where $k_n \neq 0$.   
We have
$$s(c)=1+(-1)^{n-1}(1-s(c))(c-1)^n$$ and, by applying  Lemma~\ref{rem:c^n} to each $c^i$ and then collecting like powers of $c-1$, we see 
\[\sum_{i=1}^n k_ic^i= \sum_{i=1}^n k_i+  ( \sum_{i=1}^n \binom i{1} k_i ) (c-1)+\cdots+ ( \sum_{i=j}^n \binom i{j} k_i) (c-1)^j+\cdots+k_n(c-1)^n.\]
Therefore, using the displayed equation (and separating out the leading $1$ term), we get that  $s(c)+\sum_{i=1}^n k_ic^i$ is equal to
\[1+ \sum_{i=1}^n k_i+\sum_{i=1}^n \binom i{1} k_i(c-1)+\cdots+\sum_{i=j}^n \binom i{j} k_i(c-1)^j+\cdots+k_n(c-1)^n+(-1)^{n-1}(1-s(c))(c-1)^n.\]
Since $\big(1-s(c)\big)(c-1)=-\big(1-s(c)\big)$, the final summand $(-1)^{n-1}(1-s(c))(c-1)^n$ coincides with $(-1)^{m+n-1}(1-s(c))(c-1)^{m+n}$ for each $m\geq 0$, and so it belongs to $\Ann_{L_K(E)}(\Uni E{c-1})$. Therefore, the element $s(c)+\sum_{i=1}^n k_ic^i$ belongs to $\Ann_{L_K(E)}(\Uni E{c-1})$ if and only if 
\[1+ \sum_{i=1}^n k_i+\sum_{i=1}^n \binom i{1} k_i(c-1)+\cdots+\sum_{i=j}^n \binom i{j} k_i(c-1)^j+\cdots+k_n(c-1)^n\]
belongs to $\Ann_{L_K(E)}(\Uni E{c-1})$.  \textcolor{black}{In such a situation,  the displayed element must  annihilate in particular the elements $\alpha_1$, ..., $\alpha_n$. By successively multiplying this equation in turn by $\alpha_1, \alpha_2, \dots, \alpha_n$, and using the displayed observation made prior to Remark \ref{remark:levels}, we get that 
\[0=1+ \sum_{i=1}^n k_i=\cdots=\sum_{i=j}^n \binom i{j} k_i=\cdots=k_n,
\]}
which contradicts that  $k_n\not=0$.

\smallskip

(4) The first claim follows immediately by point (2). Moreover
\[r\alpha_i=r\psi_{E,i}(1+L_K(E)(c-1)^i)=\psi_{E,i}(r+L_K(E)(c-1)^i)=\psi_{E,i}(1+L_K(E)(c-1)^i)r=\alpha_ir\]
for each $r\in L_K(E)$ and $i\geq 1$.

\smallskip

(5) Any $u\in \Uni E{c-1}$ can be written as
\[u=k_1\alpha_1+\cdots+k_n\alpha_n\]
for a suitable $n\geq 1$. Since $\Muni jE{c-1}\cong L_K(E)\alpha_j$, we have
\[(c-1)^iu=0\quad\forall i\geq n\]
and, if $i<n$, then 
\[(c-1)^iu=k_{i+1}(c-1)^i\alpha_{i+1}+\cdots+k_n(c-1)^i\alpha_n=k_{i+1}\alpha_{1}+\cdots+k_n\alpha_{n-i}.\]
Therefore $(c-1)^iu=0$ if and only if $k_{i+1}=\cdots=k_n=0$ if and only if $u\in L_K(E)\alpha_i$.
\end{proof}

\begin{remark}\label{Jgraded}
We note that although each two-sided ideal $L_K(E)(c-1)^n$ is not graded (because it contains neither $c$ nor $1$), the intersection $J=\bigcap_{n\in \mathbb N} L_K(E)(c-1)^n$ is graded (because it has been shown to be generated as a two-sided ideal by a set of vertices).  
\end{remark}

%\begin{remark}\label{Jnottwosidedfornotloop}
%In general if $c$ is not a loop, but is only assumed to be a source cycle, the left ideal $L_K(E)(c-1)^n$ is not a right ideal. 
%
%\bigskip
%
%{\bf A/F:  is this NEVER a right ideal for any non-loop?}
%
%\bigskip
%
% For example, let $c = ef $ be a cycle of the form 
%$$
%\xymatrix{
%  \bullet^v \ar@/^/[r]^{e}
%& \bullet^w  \ar@/^/[l]^{f} 
%%& \bullet^{} \ar@(ul,ur)[] \ar@/^/[r] \ar@/^/[l]
%%& \bullet^{} \ar@(ur,dr)[] \ar@/^/[l]
%}
%$$
%in a graph $E$.  By the previously-cited \cite[Proposition 2.6]{AMT1}, $L_K(E)(c-1) = {\rm Ker}(\hat{\rho}_{c^\infty})$.  Note that $fc \neq 0$ but $cf = 0$ in $L_K(E)$.   Now  $(c-1)f c^\infty = cfc^\infty - fc^\infty = 0 - fc^\infty \neq 0$ in $V_{[c^\infty]}$, so that $(c-1)f \notin {\rm Ker}(\hat{\rho}_{c^\infty})$, so $(c-1)f \notin L_K(E)(c-1)$, which gives that $L_K(E)(c-1)$ is not a right ideal. 
%Nevertheless, if $c=e_1\cdots e_n$ and we set $c_i:=e_i\cdots e_n e_1\cdots e_{i-1}$, one can prove, analogously to Proposition~\ref{prop:J}, that  $\Ann_{L_K(E)}(\Muni mR{c-1}) = \bigcap_{i=1}^nL_K(E)(c_i-1)^m$ and $\Ann_{L_K(E)}(\Uni E{c-1}) = \bigcap_{m\in\mathbb N}\Big(\bigcap_{i=1}^nL_K(E)(c_i-1)^m\Big)$.
%\end{remark}

\begin{proposition}\label{C*annih}  
Let $c$ be a source loop.   For any $j\in \Ann_{L_K(E)}(\Uni E{c-1})$
there exists $n \in \N$ such that ${c^*}^n j=0$.
\end{proposition}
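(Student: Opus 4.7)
The plan is to combine the description of $\Ann_{L_K(E)}(\Uni E{c-1})$ from Proposition~\ref{prop:J}(3) as the two-sided ideal $\langle E^0\setminus\{v\}\rangle$ with the source-loop hypothesis, which says that $c$ is the \emph{only} edge whose range is $v$. Intuitively, left multiplication by $(c^*)^n$ strips $c$'s off the fronts of real paths, and the source-loop property will force this stripping to annihilate every element of $\langle E^0\setminus\{v\}\rangle$ once $n$ is large enough.

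Concretely, I would begin by writing $j=\sum_{i=1}^{t}r_iw_is_i$ with $w_i\in E^0\setminus\{v\}$ and $r_i,s_i\in L_K(E)$. Since $(c^*)^nj=\sum_i\bigl((c^*)^nr_iw_i\bigr)s_i$, it suffices to find, for each fixed $r\in L_K(E)$ and $w\in E^0\setminus\{v\}$, an integer $n_{r,w}$ with $(c^*)^{n_{r,w}}rw=0$, and then take $n=\max_in_{r_i,w_i}$. So fix such $r$ and $w$ and expand $r$ as a finite $K$-linear combination $\sum_kk_k\alpha_k\beta_k^*$ of monomials. Using the identity $\beta^*w=\delta_{s(\beta),w}\beta^*$, only the terms with $s(\beta_k)=w$ survive right-multiplication by $w$. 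Since $c$ is the unique edge ranging at $v$, iterating backwards shows that every real path ending at $v$ is a pure power $c^\ell$. Hence if $r(\beta_k)=v$ then $\beta_k=c^\ell$, forcing $s(\beta_k)=v\neq w$; so $r(\beta_k)\neq v$ for every surviving $k$, and consequently $r(\alpha_k)=r(\beta_k)\neq v$, so $\alpha_k$ is \emph{not} a pure power of $c$.

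Finally I would split on $s(\alpha_k)$. If $s(\alpha_k)\neq v$ then $c^*\alpha_k=c^*v\alpha_k=0$ immediately. If $s(\alpha_k)=v$, let $m_k\geq 0$ be the length of the maximal initial run of $c$-edges in $\alpha_k$; since $\alpha_k$ is not a pure power of $c$, I can factor $\alpha_k=c^{m_k}e_k\mu_k$ with $e_k\in E^1\setminus\{c\}$ and $s(e_k)=v$. Iterating (CK1) then yields $(c^*)^{m_k+1}\alpha_k=c^*e_k\mu_k=0$. Choosing $n$ strictly larger than $\max_k(m_k+1)$, a finite maximum over the finite support of $r$, annihilates $(c^*)^n\alpha_k\beta_k^*$ for every surviving $k$, hence $(c^*)^nrw=0$. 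The only delicate step is the bookkeeping around which monomials survive the right-multiplication by $w$ and the resulting conclusion $r(\beta_k)\neq v$; that step is the sole place where the \emph{source}-loop assumption (rather than just ``loop'') is really used, by ruling out paths ending at $v$ other than powers of $c$.
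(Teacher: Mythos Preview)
Your proof is correct and follows essentially the same route as the paper's: both use Proposition~\ref{prop:J}(3) to write $j$ as a finite combination of terms $rws$ (equivalently $\alpha\beta^*w\gamma\delta^*$) with $w\in E^0\setminus\{v\}$, then exploit the source-loop hypothesis to see that the ghost part $\beta^*$ cannot start at $v$, so the real part $\alpha$ is not a pure power of $c$ and is killed by a high enough power of $c^*$. Your organization is slightly cleaner in that you first isolate the reduction to $(c^*)^nrw=0$ and make explicit the lemma ``every real path ending at $v$ is a power of $c$'', whereas the paper argues directly on monomials $\alpha\beta^*w\gamma\delta^*$; but the key idea and case split are the same.
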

\begin{proof}
By Proposition \ref{prop:J}(3), any nonzero $j \in \Ann_{L_K(E)}(\Uni E{c-1})$ is a $K$-linear combination of elements of the form $\alpha\beta^*w\gamma\delta^*\not=0$, with $\alpha$, $\beta$, $\gamma$ and $\delta$ real paths and $w\not=s(c)$ a vertex in $E$. Let us concentrate on one of these elements. If $\alpha\beta^*w=w$ then $c^*\alpha\beta^*w\gamma\delta^*=c^*w\gamma\delta^*=0$. If 
$\alpha\beta^*w=\beta^*w\not=w$ then $s(\beta^*)=r(\beta)\not=s(c)$, otherwise $\beta$ would be a path which starts in $w\neq s(c)$ and ends at $s(c)$, contrary to $c$ being a source loop; then $c^*\alpha\beta^*w\gamma\delta^*=c^*\beta^*w\gamma\delta^*=0$.   
%
%
%
%\bigskip
%
%{\bf A/F:  we should highlight this more; this is where we need that $c$ is a loop, not a general cycle.  Is there anywhere else we need this?}
%
%\bigskip
%
In all the other cases $\alpha = c^t\eta_1\cdots\eta_s$ with $c\not=\eta_1\in E^1$, $t\geq 0$ and $s\geq 1$. Then
\[(c^{t+1})^*\alpha\beta^*w\gamma\delta^*=(c^{t+1})^*c^t\eta_1\cdots\eta_s\beta^*w\gamma\delta^*=c^*\eta_1\cdots\eta_s\beta^*w\gamma\delta^*=0.\]
Since $j$ is a finite sum of terms of the form $\alpha\beta^*w\gamma\delta^*$, we achieve the desired conclusion.   
\end{proof}

%\bigskip
%
%{\bf  A/F:    I have not yet looked carefully at the following result and proof.}
%
%\bigskip

%QUIIIII
\begin{proposition}\label{prop:baer}
For any $\ell \in L_K(E)\setminus \Ann_{L_K(E)}(\Uni E{c-1})$ and for any $u\in \Uni E{c-1}$, there exists $X\in \Uni E{c-1}$ such that $\ell X=u$. That is, $u$  is divisible by any element   in $L_K(E)\setminus \Ann_{L_K(E)}(\Uni E{c-1})$.
\end{proposition}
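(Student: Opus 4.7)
The plan is to exploit the $(c-1)$-adic ``level'' of $\ell$, the uniseriality of each $L_K(E)\alpha_n$ (Theorem~\ref{Pruferisuniserialandartinian}), and the bimodule commutativity $rm=mr$ from Proposition~\ref{prop:J}(2),(4), in order to show that left-multiplication by $\ell$ maps $\Uni E{c-1}$ onto itself.

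By Proposition~\ref{prop:J}(3), the hypothesis $\ell \notin \Ann_{L_K(E)}(\Uni E{c-1})$ is equivalent to $\ell\notin\bigcap_{n\geq 1}L_K(E)(c-1)^n$, so the set $\{i\geq 0:\ell\in L_K(E)(c-1)^i\}$ has a largest element $m$; thus $\ell\in L_K(E)(c-1)^m$ but $\ell\notin L_K(E)(c-1)^{m+1}$. Because $c$ is a source loop, $G = K\cdot 1_{L_K(E)}$, and every element of $\Uni E{c-1}$ is a finite $K$-linear combination of the generators $\alpha_i$; since the scalars $k_i\in K$ commute with $\ell$, it suffices to solve $\ell X_s=\alpha_s$ for each $s\geq 1$ and then set $X:=\sum_i k_i X_{s_i}$ for a general $u=\sum_i k_i\alpha_{s_i}$.

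To construct $X_s$, I would look at $\beta:=\ell\alpha_{s+m}\in L_K(E)\alpha_{s+m}$. Using the commutation provided by Proposition~\ref{prop:J}(2),(4), for each $0\leq j\leq s+m$ one has $(c-1)^j\beta=\ell(c-1)^j\alpha_{s+m}=\ell\alpha_{s+m-j}$, and this vanishes precisely when $\ell\in L_K(E)(c-1)^{s+m-j}$, i.e.\ when $j\geq s$. By Proposition~\ref{prop:J}(5) this places $\beta$ inside $L_K(E)\alpha_s\setminus L_K(E)\alpha_{s-1}$. The submodule lattice of the uniserial module $L_K(E)\alpha_{s+m}$ is the chain $0\subset L_K(E)\alpha_1\subset\cdots\subset L_K(E)\alpha_{s+m}$ (Theorem~\ref{Pruferisuniserialandartinian}), so the cyclic submodule $L_K(E)\cdot\beta$ must equal $L_K(E)\alpha_s$. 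Picking $r_s\in L_K(E)$ with $r_s\beta=\alpha_s$ and invoking the commutation one more time modulo $L_K(E)(c-1)^{s+m}$, we rewrite $r_s\ell\alpha_{s+m}=\ell(r_s\alpha_{s+m})$, and so $X_s:=r_s\alpha_{s+m}\in\Uni E{c-1}$ satisfies $\ell X_s=\alpha_s$.

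The only subtle point is the commutation of $\ell$ with $(c-1)^j$ and with $r_s$ modulo the appropriate power of $L_K(E)(c-1)$; Proposition~\ref{prop:J} reduces all of this to a mechanical check. Morally, the picture is simply that $\ell$ acts on $\Uni E{c-1}$ as a ``shift-by-$m$'' on the chain $L_K(E)\alpha_1\subset L_K(E)\alpha_2\subset\cdots$, which is surjective, and the rest of the work is only to certify this picture inside the non-commutative ring $L_K(E)$.
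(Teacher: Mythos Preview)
Your argument is correct and takes a genuinely different route from the paper's. Both proofs begin by isolating the $(c-1)$-adic order of $\ell$ (your $m$ is the paper's $s$), but the paper then writes $\ell = h_{s+1}(c-1)^s + h_{s+2}(c-1)^{s+1} + \cdots$ explicitly via repeated division, posits $X = X_1\alpha_{n+s}+\cdots+X_{n+s}\alpha_1$, and solves the resulting lower-triangular linear system over $K$ (invertible because $h_{s+1}\neq 0$). You instead argue structurally: from Proposition~\ref{prop:J}(2) the action of $L_K(E)$ on each $\Muni{n}{E}{c-1}$ factors through the commutative quotient $L_K(E)/L_K(E)(c-1)^n$, so $r_s\ell$ and $\ell r_s$ act identically on $\alpha_{s+m}$; combined with uniseriality (Theorem~\ref{Pruferisuniserialandartinian}) this forces $L_K(E)(\ell\alpha_{s+m}) = L_K(E)\alpha_s$ and hence gives an abstract preimage $X_s = r_s\alpha_{s+m}$. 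The paper's approach yields an explicit recursive formula for the solution, while yours is shorter and isolates the conceptual content --- namely that the action on each layer is through a commutative local artinian ring, where divisibility by a nonzero element of given order is automatic. One small point worth making explicit in your write-up is the consequence $r_1r_2\,x = r_2r_1\,x$ for all $x\in \Muni{n}{E}{c-1}$, which follows from $rm=mr$ in Proposition~\ref{prop:J}(2) but is what you actually use when swapping $\ell$ and $r_s$.
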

%
%\bigskip
%
%{\bf I have not read the following proof carefully yet.}
%
%\bigskip
%
%
\begin{proof}
% As usual, denote by $M_n$ the uniserial submodule of $U$ of length $n$. Moreover, since $A_c=\emptyset$, we have that the element of the uniserial module $M_n=R/R(c-1)^n$ can be written in a unique way  (HERE we have to be more precise, since the elements are in the quotient modulo $R(c-1)^n$)
% \[k_1+k_2(c-1)+\cdots +k_n(c-1)^{n-1}+R(c-1)^n%=(k_1,...,k_{n-1},k_n)
% \]
%where $k_i\in K$. 
Let us consider $u\in \Uni E{c-1}$. Then, \textcolor{black}{as observed at the beginning of this section}, we have 
\[u=k_1\alpha_n+k_2\alpha_{n-1}+\cdots +k_n\alpha_1\]
where $k_i\in K$. Since $\ell\notin \Ann_{L_K(E)}(\Uni E{c-1})$, by Proposition~\ref{prop:J} there exists $m\in \mathbb N$ such that $\ell$ is not right-divisible by $(c-1)^m$.   Therefore
\[\ell=h_1+h_2(c-1)+\cdots+h_{m}(c-1)^{m-1}+q_m(c-1)^m\]
with $h_i\in K$ for $i=1,...,m$, $q_m\in L_K(E)$ and \[(h_1,...,h_{m})\not=(0,0,...,0).\]
Let $s$ be the minimum index such that $h_{s+1}\not=0$.
It is not restrictive to assume $m\geq n+s$: otherwise we apply the division algorithm to $q_m$, $q_{m+1}$, ... until we get
\[\ell=h_1+h_2(c-1)+\cdots+h_{m}(c-1)^{m-1}+\cdots+h_{n+s}(c-1)^{n+s-1}+q_{n+s}(c-1)^{n+s}.\]
%The element $u\in R\alpha_n$ belongs also to $R\alpha_{n+s}$; as element of $R\alpha_{n+s}$ it has the following aspect
%\[u=k_1(c-1)^{n-1}(c-1)^s+\cdots+k_{n-1}(c-1)(c-1)^s+k_n(c-1)^s=(k_1,...,k_{n-1},k_n,\underbrace{0,0,...,0}_s).\]
We claim that the equation $\ell X=u$ has solutions in $L_K(E)\alpha_{n+s}$, as follows.  Set $X=X_{1}\alpha_{n+s}+\cdots+X_{n+s-1}\alpha_2+X_{n+s}\alpha_1$.  
We solve \[\ell\big(X_{1}\alpha_{n+s}+\cdots+X_{n+s-1}\alpha_2+X_{n+s}\alpha_1
\big)=u,\]
that is
\[\big(h_1+\cdots+h_{m}(c-1)^{m-1}+q_m(c-1)^m\big)\big(X_{1}\alpha_{n+s}+\cdots+X_{n+s}\alpha_1
\big)=k_1\alpha_n+\cdots+k_n\alpha_1.\]
%
%(c-1)^{n-1}(c-1)^s+\cdots+k_{n-1}(c-1)(c-1)^s+k_n(c-1)^s.\]
This yields the following equations in the field $K$:
\[h_1X_{1}=0,\ ...,\ \sum_{i=1}^{s}h_iX_{s+1-i}=0,\  \sum_{i=1}^{s+1}h_iX_{s+2-i}=k_1,\ \sum_{i=1}^{s+2}h_iX_{s+3-i}=k_2,...,\  
\sum_{i=1}^{s+n}h_iX_{s+n+1-i}=k_n.\]
Since $0=h_1=\cdots=h_{s}$ we get
\[h_{s+1}X_{1}=k_1,\ h_{s+1}X_{2}+h_{s+2}X_{1}=k_{2},..., \sum_{i=s+1}^{s+n}h_{i}X_{s+n+1-i}=k_n\]
from which we obtain the values of $X_{1}$, ..., $X_{n}$. The values of $X_{n+1}$, ..., $X_{n+s}$ can be chosen arbitrarily.
%
%{\bf XXX I need to think more about this proof.   Is there again something special about the fact that $c$ is a source loop?}
%\textcolor{red}{Yes! The element $u\in U$ is a $K$-linear combination of the $\alpha_i$ because $c$ is a source loop!!!}
%
\end{proof}

\begin{corollary}\label{cstarunonzero}
If $0 \neq u \in \Uni E{c-1}$ then $(c^*)^m u \neq 0$ for all $m\in \N$.
\end{corollary}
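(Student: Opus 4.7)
The plan is to show that $c^*$ acts injectively on $\Uni E{c-1}$ by computing its action on the generators $\alpha_i$ and noting that this action is triangular with respect to these generators. By the discussion at the beginning of Section~\ref{Sectionsourcecycletosourceloop}, every element of $\Uni E{c-1}$ has a unique expression as a finite $K$-linear combination of the $\alpha_i$, so if $c^*$ preserves the ``leading'' coefficient in such an expansion, then $(c^*)^m u \neq 0$ follows immediately by iteration.

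First I would observe that $v\alpha_n = \alpha_n$ for every $n \geq 1$, where $v := s(c) = r(c)$. Indeed, since $c$ is a source loop, $wc = 0$ for each vertex $w \neq v$, whence $w = -w(c-1)$, and iterating gives $w \in L_K(E)(c-1)^n$ for every $n$. Thus $1 - v = \sum_{w \neq v} w$ lies in $\Ann_{L_K(E)}(\Uni E{c-1})$, in agreement with Proposition~\ref{prop:J}(3). Setting $\alpha_0 := 0$ and using $(c-1)\alpha_n = \alpha_{n-1}$, I get $c\alpha_n = \alpha_n + \alpha_{n-1}$; applying $c^*$ on the left and invoking the Cuntz-Krieger relation $c^* c = r(c) = v$ together with $v\alpha_n = \alpha_n$, I obtain
\[\alpha_n \;=\; c^* c \,\alpha_n \;=\; c^* \alpha_n + c^* \alpha_{n-1},\]
so $c^* \alpha_n = \alpha_n - c^* \alpha_{n-1}$. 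An easy induction on $n$ then yields
\[c^* \alpha_n \;=\; \alpha_n - \alpha_{n-1} + \alpha_{n-2} - \cdots + (-1)^{n-1}\alpha_1;\]
the crucial point is that the coefficient of $\alpha_n$ in $c^*\alpha_n$ is always $1$.

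Now let $0 \neq u \in \Uni E{c-1}$, and write $u = \sum_{i=1}^n k_i \alpha_i$ with $k_n \neq 0$. By the formula above together with $K$-linearity, $c^* u = k_n \alpha_n + (\text{$K$-linear combination of }\alpha_1, \ldots, \alpha_{n-1})$, so the coefficient of $\alpha_n$ in $c^* u$ remains $k_n \neq 0$. Iterating, $(c^*)^m u$ has coefficient $k_n$ at $\alpha_n$ for every $m \in \N$, hence $(c^*)^m u \neq 0$.

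The only genuinely nontrivial input is the identity $v \alpha_n = \alpha_n$, which in turn rests on Proposition~\ref{prop:J}(3); once that is in hand, the remainder is a routine triangularity argument and poses no real obstacle.
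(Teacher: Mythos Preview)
Your proof is correct, but it follows a genuinely different route from the paper. The paper argues via divisibility: since $c \notin L_K(E)(c-1) \supseteq \Ann_{L_K(E)}(\Uni E{c-1})$, Proposition~\ref{prop:baer} furnishes $x \in \Uni E{c-1}$ with $cx = u$; replacing $x$ by $s(c)x$ (still a preimage, and nonzero lest $u=0$) and applying $c^*c = s(c)$ gives $c^*u = x \neq 0$, and one iterates. Your argument instead computes $c^*$ explicitly on the generators $\alpha_n$, obtaining the closed form $c^*\alpha_n = \sum_{j=1}^n (-1)^{n-j}\alpha_j$ and reading off injectivity from the unit-upper-triangular shape of this action relative to the $K$-basis $\{\alpha_i\}$.

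The trade-off: the paper's proof is shorter and more conceptual, but it invokes Proposition~\ref{prop:baer}, which is a substantive divisibility result. Your argument is more hands-on but also more self-contained, relying only on the identity $v\alpha_n = \alpha_n$ (which you derive directly from $w = -w(c-1)$ for $w \neq v$, essentially the easy half of Proposition~\ref{prop:J}(3)) together with the basic relation $(c-1)\alpha_n = \alpha_{n-1}$ and CK1. As a bonus, your explicit formula for $c^*\alpha_n$ could be useful elsewhere.
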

\begin{proof}
Since $c\notin L_K(E)(c-1)\supseteq \Ann_{L_K(E)}(\Uni E{c-1})$, by Proposition \ref{prop:baer} there exists $0\not=x\in \Uni E{c-1}$ with $cx = u$. Since $cs(c) = c$ we may assume that $s(c)x = x$.  Then $0\neq x = s(c)x =c^*cx = c^*u$.  Repeating the same argument for  $0\not=c^*u\in \Uni E{c-1}$, we get $(c^*)^2u\neq 0$; iterating, we get the result.  
\end{proof}
%
%\begin{remark}
%Here we use the observation made in Remark~\ref{remark:levels}. The idea is: a certain equation has no solution at the $n$-level in the uniserial Pr\"ufer module $\Uni E{c-1}$, but it has a solution at a higher level. And this fact gives us the divisibility of the Pr\"ufer modules by all the elements in $L_K(E)\setminus \Ann_{L_K(E)}(\Uni E{c-1})$. 
%\end{remark}
%

\begin{proposition}\label{prop:div}
%Let $c$ be a source loop in $E$.   
Let $c$ be a source loop in $E$.   Let $I_f$ be a finitely generated  left ideal of $L_K(E)$, and let $\varphi: I_f\to \Uni E{c-1}$ be a $L_K(E)$-homomorphism. Then there exists $\psi: L_K(E)\to \Uni E{c-1}$ such that $\psi |_{I_f}=\varphi$.  Consequently, ${\rm Ext}^1(L_K(E)/I_f, \Uni E{c-1}) = 0$. 
\end{proposition}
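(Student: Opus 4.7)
The plan is to reduce to the principal case using the B\'ezout property of $L_K(E)$ (recalled in the introduction), and then to split into two cases depending on whether the generator annihilates $\Uni E{c-1}$.

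Since every finitely generated one-sided ideal of $L_K(E)$ is principal, write $I_f = L_K(E)\ell$ for some $\ell\in L_K(E)$, and set $u := \varphi(\ell)\in\Uni E{c-1}$. Constructing an extension $\psi$ of $\varphi$ then reduces to producing $X = \psi(1) \in \Uni E{c-1}$ with $\ell X = u$: indeed, the assignment $\psi(r) := rX$ is $L_K(E)$-linear, and $\psi(r\ell) = r(\ell X) = ru = r\varphi(\ell) = \varphi(r\ell)$ for every $r\in L_K(E)$, so $\psi|_{I_f}=\varphi$ automatically.

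If $\ell \notin \Ann_{L_K(E)}(\Uni E{c-1})$, Proposition~\ref{prop:baer} supplies the desired $X$ directly, and we are done. The harder case — and the main obstacle — is $\ell \in \Ann_{L_K(E)}(\Uni E{c-1})$: there $\ell X = 0$ for every $X \in \Uni E{c-1}$, so an extension can exist only if $u=0$, after which $\psi=0$ trivially works. To force $u=0$, I would apply Proposition~\ref{C*annih} to produce $n\in\N$ with $(c^*)^n\ell = 0$ in $L_K(E)$; applying the $L_K(E)$-linear map $\varphi$ then gives
\[ (c^*)^n u \;=\; (c^*)^n\varphi(\ell) \;=\; \varphi\bigl((c^*)^n\ell\bigr) \;=\; 0, \]
and Corollary~\ref{cstarunonzero} rules out $u\neq 0$. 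Hence $u=0$, as desired.

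The ``Consequently'' clause is formal: applying $\Hom_{L_K(E)}(-,\Uni E{c-1})$ to the short exact sequence $0 \to I_f \to L_K(E) \to L_K(E)/I_f \to 0$ and using that the free module $L_K(E)$ is projective (so $\Ext^1(L_K(E),\Uni E{c-1})=0$) gives the exact tail
\[ \Hom(L_K(E),\Uni E{c-1}) \;\longrightarrow\; \Hom(I_f,\Uni E{c-1}) \;\longrightarrow\; \Ext^1(L_K(E)/I_f,\Uni E{c-1}) \;\longrightarrow\; 0, \]
and the surjectivity of the leftmost map is precisely the extension statement just established, so $\Ext^1(L_K(E)/I_f,\Uni E{c-1}) = 0$.
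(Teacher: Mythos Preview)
Your proof is correct and follows essentially the same approach as the paper: reduce to a principal ideal via the B\'ezout property, then split into the two cases $\ell\notin\Ann_{L_K(E)}(\Uni E{c-1})$ (handled by Proposition~\ref{prop:baer}) and $\ell\in\Ann_{L_K(E)}(\Uni E{c-1})$ (handled by combining Proposition~\ref{C*annih} with Corollary~\ref{cstarunonzero} to force $\varphi(\ell)=0$). The only cosmetic difference is that the paper phrases the annihilator case as $\Hom_{L_K(E)}(I_f,\Uni E{c-1})=0$, while you argue directly that $u=\varphi(\ell)=0$; these are the same computation.
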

\begin{proof}
It has been established in \cite{AMT2} that $L_K(E)$ is a B\'{e}zout ring, i.e., that every finitely generated left ideal of $L_K(E)$ is principal.   So  $I_f = L_K(E)\ell$ for some $\ell \in I_f$.  \\
%$\ell$ to be a generator of $I$. 
Assume on one hand  that $\ell \in \Ann_{L_K(E)}(\Uni E{c-1})$, and hence $I_f\leq \Ann_{L_K(E)}(\Uni E{c-1})$.
By Proposition \ref{C*annih}, any element of $\Ann_{L_K(E)}(\Uni E{c-1})$ is annihilated by a suitable ${c^*}^n$.   Further, ${c^*}^nu\neq 0$ for any $0\neq u\in \Uni E{c-1}$ by Corollary \ref{cstarunonzero}.  Thus in this case we must have  $\Hom_{L_K(E)}(I_f, \Uni E{c-1})=0$, so that $\varphi = 0$ and the conclusion follows trivially.  \\
Assume on the other hand  that $\ell \notin \Ann_{L_K(E)}(\Uni E{c-1})$.
%Let us prove that $\psi= \rho_{(\ell)\varphi}$, the right multiplication by $(\ell)\varphi$, extends $\varphi$ to $L_K(E)$. 
By Proposition \ref{prop:baer}, there exists $x\in \Uni E{c-1}$ for which $\ell x = \varphi(\ell)$.  Let $\psi:L_K(E)\to \Uni E{c-1}$ be the extension of the map defined by setting $\psi(1)=x$. Then, for each $i=r_i\ell\in I_f$, we have
\[\psi(i)=\psi(r_i\ell)=r_i\ell\psi(1)=r_i\ell x=r_i \varphi(\ell)=\varphi(r_i\ell)=\varphi(i),\]
%
%
% Then for each $i\in I_f$, 
%$$ (i)\varphi = (r \ell)\varphi = r (\ell) \varphi = r (\ell x) = (r \ell) x = r \ell (\ell)\varphi = i (\ell) \varphi = (i) \rho_{(\ell)\varphi} = (i)\psi,$$
%then the statement follows from the divisibility of $U_c$ by any element not in $J$ established in Proposition \ref{prop:baer}.   
which establishes the desired conclusion in this case as well. \\  
The final statement is then immediate.  
\end{proof}

A submodule $N$ of a module $M$ is \emph{pure} if for each finitely presented module $F$, the functor $\Hom(F,-)$ preserves exactness of the short exact sequence $0\to N\to M\to M/N\to 0$. Modules that are injective with respect to pure embeddings are called \emph{pure-injective}.  

By \cite[Lemma 4.2.8]{Prest}, a module which is linearly compact over its endomorphisms ring is pure-injective. We will prove that $\Uni E{c-1}$ is artinian over the ring $\End(\Uni E{c-1})$ and therefore linearly compact. 

\begin{proposition}\label{prop;endomorphism}
Let $E$ be a finite graph, and $c$ a source loop in $E$. Then the endomorphism ring of the left $L_K(E)$-module $\Uni E{c-1}$ is isomorphic to the ring of formal power series $K[[x]]$.
\end{proposition}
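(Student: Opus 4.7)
The plan is to exhibit an explicit ring isomorphism $\Phi \colon \End_{L_K(E)}(\Uni E{c-1}) \to K[[x]]$ obtained by reading off the $K$-coefficients that appear in $\varphi(\alpha_i)$ for each generator $\alpha_i$, and showing that these coefficients stabilize into a single formal power series.

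Given $\varphi \in \End_{L_K(E)}(\Uni E{c-1})$, applying $\varphi$ to the identity $(c-1)^i \alpha_i = 0$ forces $(c-1)^i \varphi(\alpha_i) = 0$, so by Proposition~\ref{prop:J}(5) we have $\varphi(\alpha_i) \in L_K(E)\alpha_i$. Since $c$ is a source loop, $G = K$ (Remark~\ref{rem:Gincasecissource}), so by Proposition~\ref{prop:Grep} there is a unique expression
$$\varphi(\alpha_i) = \sum_{m=0}^{i-1} a^{(i)}_m \alpha_{i-m}, \quad a^{(i)}_m \in K.$$
Applying $\varphi$ to $\alpha_{i-1} = (c-1)\alpha_i$, expanding $(c-1)\varphi(\alpha_i)$ term-by-term and invoking the uniqueness of the $G$-representation, we deduce $a^{(i-1)}_m = a^{(i)}_m$ for $0 \leq m \leq i-2$. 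Hence the coefficients stabilize into a single sequence $(a_m)_{m \geq 0}$, and we set $\Phi(\varphi) := \sum_{m \geq 0} a_m x^m$. Injectivity of $\Phi$ is immediate, since $\Phi(\varphi) = 0$ forces $\varphi$ to vanish on the generating set $\{\alpha_i\}_{i\geq 1}$. Multiplicativity reduces to a Cauchy-product computation: if $\psi(\alpha_i) = \sum_m b_m \alpha_{i-m}$, then
$$(\varphi \circ \psi)(\alpha_i) = \sum_m b_m \varphi(\alpha_{i-m}) = \sum_{\ell=0}^{i-1}\bigl(\sum_{m+n=\ell} a_n b_m\bigr)\alpha_{i-\ell},$$
where we used that each scalar $b_m \in K$ commutes with the $L_K(E)$-linear map $\varphi$.

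For surjectivity, given $(a_m)_{m \geq 0} \in K^{\mathbb{N}}$, define $\varphi_n \in \End_{L_K(E)}(L_K(E)\alpha_n)$ by $\varphi_n(\alpha_n) := \sum_{m=0}^{n-1} a_m \alpha_{n-m}$; well-definedness holds because the right-hand side lies in $L_K(E)\alpha_n$, which by Proposition~\ref{prop:J}(1) is annihilated by $\Ann_{L_K(E)}(\alpha_n) = L_K(E)(c-1)^n$. The main step requiring care is verifying compatibility across the direct system $L_K(E)\alpha_n \hookrightarrow L_K(E)\alpha_{n+1}$, which under the identification $\alpha_n = (c-1)\alpha_{n+1}$ amounts to the calculation
$$(c-1)\varphi_{n+1}(\alpha_{n+1}) = \sum_{m=0}^{n} a_m (c-1)\alpha_{n+1-m} = \sum_{m=0}^{n-1} a_m \alpha_{n-m} = \varphi_n(\alpha_n),$$
using $(c-1)\alpha_{j+1} = \alpha_j$ and $(c-1)\alpha_1 = 0$. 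Passing to the direct limit produces an $L_K(E)$-endomorphism $\varphi$ of $\Uni E{c-1}$ satisfying $\Phi(\varphi) = \sum a_m x^m$, completing the proof.
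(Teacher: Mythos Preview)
Your proof is correct and follows essentially the same approach as the paper: both extract a power series from the stabilized coefficients in $\varphi(\alpha_i)$ via the relation $(c-1)\alpha_{i+1}=\alpha_i$, and both construct the inverse by prescribing values on the $\alpha_i$ and checking compatibility. Your argument is in fact slightly more explicit than the paper's, which leaves the multiplicativity (Cauchy product) and the well-definedness of the inverse map as ``easy to check,'' whereas you spell out both the convolution identity and the annihilator condition via Proposition~\ref{prop:J}(1) before passing to the direct limit.
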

\begin{proof}
Let $\varphi\in \End(\Uni E{c-1})$. 
Since
\[(c-1)^i\varphi(\alpha_i)=\varphi\big((c-1)^i\alpha_i\big)=\varphi(0)=0,\]
by Proposition~\ref{prop:J} $\varphi(\alpha_i)$ belongs to $L_K(E)\alpha_i$. Therefore
\[\varphi(\alpha_i)=h_{1,i}\alpha_i+\cdots+h_{i,i}\alpha_1.\]
Since
\begin{align*}
h_{1,i}\alpha_i+\cdots+h_{i,i}\alpha_1 & =\varphi(\alpha_i)  = \varphi((c-1)\alpha_{i+1}) \\
& = (c-1)\varphi(\alpha_{i+1})= (c-1)(h_{1,i+1}\alpha_{i+1}+\cdots+h_{i,i+1}\alpha_2+h_{i+1,i+1}\alpha_{1})\\
& = h_{1,i+1}\alpha_i+\cdots +h_{i,i+1}\alpha_1,
\end{align*}
%\[h_{1,i}\alpha_i+\cdots+h_{i,i}\alpha_1=\varphi(\alpha_i)=\varphi((c-1)\alpha_{i+1})=\]
%\[=(c-1)\varphi(\alpha_{i+1})=
%(c-1)(h_{1,i+1}\alpha_{i+1}+\cdots+h_{i,i+1}\alpha_2+h_{i+1,i+1}\alpha_{1})=\]
%\[=h_{1,i+1}\alpha_i+\cdots +h_{i,i+1}\alpha_1,\]
we get $h_{j,i+1}=h_{j,i}=:h_j$ for each $1\leq j\leq i$.
Denote by $H_\varphi(x)$ the formal power series
\[\sum_{j=1}^\infty h_jx^{j-1}.\]
It is easy to check that the map $\varphi\mapsto H_\varphi(x)$
defines a ring  monomorphism $\Phi$ between $\End(\Uni E{c-1})$ and $K[[x]]$. Given any formal power series $H(x)=\displaystyle\sum_{j=1}^\infty h_jx^{j-1}$, setting
\[\varphi_H(\alpha_i)=h_1\alpha_i+\cdots+h_i\alpha_1\]
one defines an endomorphism of $\Uni E{c-1}$. Indeed
\begin{align*}
(c-1)\varphi_H(\alpha_{i+1})  & =(c-1)(h_1\alpha_{i+1}+\cdots+h_i \alpha_2 + h_{i+1}\alpha_1) \\
& =   \ h_1\alpha_{i}+\cdots+h_{i}\alpha_1 + h_{i+1} 0  \\
& =   \varphi_H(\alpha_{i}) \ = \ \varphi_H((c-1)\alpha_{i+1}).  \qedhere 
 \end{align*}
%{\bf XXX I changed the previous sentence, I think it was backwards, can you check that it is now correct?}\textcolor{red}{Yes, now it is correct}    Clearly, $\Phi(\varphi_H)=H(x)$ and hence $\Phi$ is a ring isomorphism.
\end{proof}

\begin{corollary}\label{cor:serieformali}
Any endomorphism of $\Uni E{c-1}$ is the right product by a formal power series $\displaystyle\sum_{j=1}^\infty h_j(c-1)^{j-1}$ with coefficients $h_j\in K$.
\end{corollary}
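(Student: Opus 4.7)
The plan is to show that the isomorphism $\Phi: \End(\Uni E{c-1}) \to K[[x]]$ established in Proposition~\ref{prop;endomorphism} can be realized concretely as ``right multiplication by a power series in $c-1$.'' First I would make precise what such a right multiplication means on $\Uni E{c-1}$. Given a formal power series $\sum_{j=1}^\infty h_j(c-1)^{j-1}$ with coefficients $h_j \in K$, and any $u \in \Uni E{c-1}$, Proposition~\ref{prop:J}(5) guarantees an index $n \in \N$ (depending on $u$) with $(c-1)^n u = 0$, hence $(c-1)^{j-1} u = 0$ for every $j > n$. Thus the prescription $u \mapsto \sum_{j=1}^{n} h_j (c-1)^{j-1} u$ reduces to a finite sum, and is clearly independent of which sufficiently large $n$ we pick, yielding a well-defined $K$-linear endomorphism of $\Uni E{c-1}$.

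Next I would verify that this map is actually $L_K(E)$-linear. This is immediate from Proposition~\ref{prop:J}(4), which asserts $r \alpha_i = \alpha_i r$ for all $r \in L_K(E)$ and all $i$; extending by $K$-linearity shows that every element of $L_K(E)$ commutes with each $(c-1)^{j-1}$ when acting on $\Uni E{c-1}$. So the prescribed action is genuinely an element of $\End_{L_K(E)}(\Uni E{c-1})$, and in particular left and right multiplication by $(c-1)^{j-1}$ agree here, which is what legitimises calling this a ``right product.''

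Finally, I would match the two descriptions on the generators $\alpha_i$. Given $\varphi \in \End_{L_K(E)}(\Uni E{c-1})$ with $\Phi(\varphi) = \sum_{j=1}^\infty h_j x^{j-1}$, the computation in the proof of Proposition~\ref{prop;endomorphism} shows
$$\varphi(\alpha_i) = h_1\alpha_i + h_2\alpha_{i-1} + \cdots + h_i\alpha_1.$$
On the other hand, using the identities $(c-1)^{j-1}\alpha_i = \alpha_{i-j+1}$ for $1 \leq j \leq i$ and $(c-1)^{j-1}\alpha_i = 0$ for $j > i$ (from the displayed formulas immediately after Theorem~\ref{Pruferisuniserialandartinian}), the right-multiplication action yields exactly the same sum $h_1\alpha_i + h_2\alpha_{i-1} + \cdots + h_i\alpha_1$ when applied to $\alpha_i$. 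Since the $\alpha_i$ generate $\Uni E{c-1}$ as an $L_K(E)$-module, the two $L_K(E)$-endomorphisms coincide everywhere.

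I do not expect any serious obstacle: the content is entirely a bookkeeping repackaging of Proposition~\ref{prop;endomorphism}. The only mild subtlety is making sense of the infinite sum, and the finite-length bound $(c-1)^n u = 0$ on each $u$ (Proposition~\ref{prop:J}(5)) resolves this cleanly.
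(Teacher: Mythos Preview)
Your proposal is correct and follows essentially the same approach as the paper: both use Proposition~\ref{prop:J}(4) to justify the left/right commutation, use the vanishing $(c-1)^{j-1}\alpha_i=0$ for $j>i$ to make the infinite sum well-defined, and match the resulting map against the formula $\varphi(\alpha_i)=h_1\alpha_i+\cdots+h_i\alpha_1$ from Proposition~\ref{prop;endomorphism}. The paper is terser (it defines the right action only on the $\alpha_i$ and lets the rest be implicit), while you spell out well-definedness on arbitrary $u$ via Proposition~\ref{prop:J}(5) and check $L_K(E)$-linearity explicitly, but the substance is the same.
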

\begin{proof}
Following the notation of Proposition~\ref{prop;endomorphism},
the endomorphism $\varphi_H$ associated to the formal power series $H=\sum_{j=1}^\infty h_jx^{j-1}$ sends $\alpha_i$ to 
\[h_1\alpha_i+\cdots+h_i\alpha_1=(h_1+h_2(c-1)+\cdots+h_i(c-1)^{i-1})\alpha_i=
\alpha_i(h_1+h_2(c-1)+\cdots+h_i(c-1)^{i-1}),\]
where the latter equality follows by point (4) of Proposition~\ref{prop:J}.
Since $\alpha_i(c-1)^j=(c-1)^j\alpha_i=0$ for each $j\geq i$, we can define
\[\alpha_i\sum_{j=1}^\infty h_j(c-1)^{j-1}:=\alpha_i(h_1+h_2(c-1)+\cdots+h_i(c-1)^{i-1})=\varphi_H(\alpha_i).   \qedhere   \] 
\end{proof}

\begin{proposition}\label{Ucpureinjective} 
Let $E$ be a finite graph, and $c$ a source loop in $E$.
%Suppose that  $\alpha c\neq 0$  for at most a  finite number of distinct real paths $
%\alpha  \in A_c$. 
Then $\Uni E{c-1}$ is pure-injective.
Consequently, if $(N_\alpha, f_{\alpha, \beta})$ is a direct system of left $L_K(E)$-modules and $L_K(E)$-homomorphisms,  then $\Ext^1(\varinjlim N_\alpha, \Uni E{c-1})=\varprojlim \Ext^1(N_\alpha, \Uni E{c-1})$.  
\end{proposition}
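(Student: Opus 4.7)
The plan is to establish pure-injectivity of $U:=\Uni E{c-1}$ via the strategy outlined in the paragraph preceding the proposition: show $U$ is artinian as a module over its endomorphism ring $S:=\End_{L_K(E)}(U)$, which by Proposition~\ref{prop;endomorphism} and Corollary~\ref{cor:serieformali} we identify with $K[[x]]$ acting on $U$ via right multiplication by power series in $c-1$. Artinian modules are linearly compact, and linear compactness over the endomorphism ring forces pure-injectivity by \cite[Lemma~4.2.8]{Prest}. As a warm-up, note that for each $n\geq 1$ the submodule $L_K(E)\alpha_n$ acquires the structure of $K[x]/(x^n)$ as an $S$-module via $\alpha_i\mapsto x^{n-i}$, and the $S$-action is the natural $K[[x]]$-action.

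The crucial step is to determine the full lattice of $S$-submodules of $U$. Given $0\ne u=k_1\alpha_{i_1}+k_2\alpha_{i_2}+\cdots+k_s\alpha_{i_s}\in U$ with $i_1<\cdots<i_s$ and $k_s\ne 0$, inside $L_K(E)\alpha_{i_s}\cong K[x]/(x^{i_s})$ the element $u$ corresponds to $k_s+k_{s-1}x^{i_s-i_{s-1}}+\cdots+k_1x^{i_s-i_1}$, whose nonzero constant term $k_s$ makes it a unit in the local ring $K[x]/(x^{i_s})$. Hence $Su=K[x]/(x^{i_s})=L_K(E)\alpha_{i_s}$. It follows that every finitely generated $S$-submodule of $U$ is of the form $L_K(E)\alpha_n$ for some $n$, and consequently that every $S$-submodule of $U$ is either a single $L_K(E)\alpha_m$ or all of $U$. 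The $S$-submodule lattice of $U$ is therefore the chain
\[0 < L_K(E)\alpha_1 < L_K(E)\alpha_2 < \cdots < U,\]
so $U$ is artinian over $S$ and pure-injectivity follows.

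For the consequent, I would invoke the classical result that whenever $M$ is a pure-injective module, the contravariant functor $\Ext^1(-, M)$ sends direct limits to inverse limits. The standard proof uses the canonical presentation $0\to\bigoplus N_\alpha\xrightarrow{\delta}\bigoplus N_\alpha\to\varinjlim N_\alpha\to 0$ of a filtered colimit, applies $\Ext^*(-,M)$ together with the identification $\Hom(\bigoplus N_\alpha,M)=\prod\Hom(N_\alpha,M)$, and compares the resulting long exact sequence with the six-term sequence defining $\varprojlim$ and $\varprojlim^1$; pure-injectivity of $M$ forces $\varprojlim^1\Hom(N_\alpha,M)=0$, which pins the natural map $\Ext^1(\varinjlim N_\alpha,M)\to\varprojlim\Ext^1(N_\alpha,M)$ to be an isomorphism. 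The main obstacle in the plan above is the artinianness step: an $S$-submodule need not a priori coincide with an $L_K(E)$-submodule, and one must verify that the unit-detection argument inside $K[x]/(x^{i_s})$ is strong enough to force every cyclic $S$-submodule of $U$ to be a full $L_K(E)\alpha_n$. Once this collapse of lattices is secured, the remainder is a citation of Prest together with the standard direct/inverse limit argument for $\Ext^1$.
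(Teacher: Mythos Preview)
Your proposal is correct and follows essentially the same route as the paper: both establish that the $S$-submodule lattice of $U$ coincides with the chain $0<L_K(E)\alpha_1<L_K(E)\alpha_2<\cdots<U$ by showing that any nonzero element $u$ with ``top'' term $k_s\alpha_{i_s}$ (so $k_s\ne 0$) generates all of $L_K(E)\alpha_{i_s}$ under the $K[[x]]$-action, then invoke artinian $\Rightarrow$ linearly compact $\Rightarrow$ pure-injective via \cite[Lemma~4.2.8]{Prest}. The only cosmetic differences are that you package the unit argument via the explicit identification $L_K(E)\alpha_n\cong K[x]/(x^n)$ (whereas the paper works directly with the inverse power series in $K[[c-1]]$), and that you sketch the $\varinjlim/\varprojlim$ interchange for $\Ext^1$ rather than citing \cite[Lemma~3.3.4]{GT} as the paper does.
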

\begin{proof}
The left $L_K(E)$-module $\Uni E{c-1}$ is the union of its submodules $\{L_K(E)\alpha_i \ | \ i\geq 1\}$. Let $T$ denote the endomorphism ring $\End (\Uni E{c-1})$. By Corollary~\ref{cor:serieformali}, $L_K(E)\alpha_i$ is a right $T$-submodule of $\Uni E{c-1}$ for each $i\geq 1$. We show that these are the unique right $T$-submodules of $\Uni E{c-1}$. 

If $N$ is a finitely generated $T$-submodule of $\Uni E{c-1}$, let $i_N$ be the smallest natural number $i$ such that $N\leq L_K(E)\alpha_i$. If $i_N=1$, $L_K(E)\alpha_{i_N}=L_K(E)\alpha_1$ is a one dimensional $K$-vector space and hence a simple $T$-module. If $i_N\geq 2$, consider $n\in N\setminus L_K(E)\alpha_{i_N-1}$. Then
\[n=k_1\alpha_{i_N}+\cdots+k_{i_N}\alpha_1=\alpha_{i_N}(k_1+\cdots+k_{i_N}(c-1)^{i_N-1})\]
with $k_1\not=0$. Again invoking Corollary~\ref{cor:serieformali}, let  $\sum_{j=1}^\infty h_j(c-1)^{j-1}$ be the inverse of $k_1+\cdots+k_{i_N}(c-1)^{i_N-1}$ in $K[[c-1]]$, which exists as $k_1\neq 0$. Then
\[n\sum_{j=1}^\infty h_j(c-1)^{j-1}=\alpha_{i_N}\]
and hence $N=L_K(E)\alpha_{i_N}$.
%{\bf XXX These last two should be $\alpha_{i_N}$, yes?}\textcolor{red}{YES}

If on the other hand $N$ is not finitely generated, write $N=\varinjlim N_{\lambda}$, where the $N_{\lambda}$ are the finitely generated right $T$-submodules of $N$. For any $\lambda$, by the previous paragraph, there exists $j_{\lambda}$ such that $N_{\lambda}=L_K(E)\alpha_{j_{\lambda}}$. Since $N\not=N_\lambda$ for any $\lambda$, the sequence $({j_{\lambda}})_\lambda$ is unbounded and so $N=\Uni E{c-1}$.

Thus, since $\{L_K(E)\alpha_i:i\geq 1\}$ has been shown to be  the lattice of the proper right $T$-submodules of $\Uni E{c-1}$, we conclude that $\Uni E{c-1}$ is an artinian right $T$-module and hence linearly compact. 
%***************************
%
%
% 
%By Remark~\ref{rem:Gincasecissource}
%%From the assumption it follows that $\alpha c\neq 0$  for at most a  finite number of distinct real paths $
%%\alpha  \in A_c$. Then
%%
%%First notice that, if  $\alpha c\neq 0$ in $L_K(E)$   for a finite number of distinct real paths $\alpha \in A_c$, then
%the  $K$-vector space $V_{[c^{\infty}]}$ has finite dimension. Set $T:=\End_{L_K(E)}(\Uni E{c-1})$. We show that $\Uni E{c-1}$ is an artinian right $T$-module. Since $_{L_K(E)}\Uni E{c-1}=\varinjlim_{i\in \N} \Muni iE{c-1}=\bigcup_{i\in \N}  L_K(E)\alpha_i$ and 
%$L_K(E)\alpha_n=\{m\in \Uni E{c-1}: (c-1)^n m=0\}$, we get that for any $\varphi\in T$ and $m\in L_K(E)\alpha_n$ one has
%\[0=(0)\varphi=((c-1)^n m)\varphi=(c-1)^n(m)\varphi,\]
%and hence $(m)\varphi$ belongs to $L_K(E)\alpha_n$.
% So any $L_K(E)\alpha_i$ is a $T$-submodule of $\Uni E{c-1}$ and $L_K(E)\alpha_i\leq L_K(E)\alpha_{i+1}$ also as $T$-modules.  Since $L_K(E)\alpha_{i+1}/L_K(E)\alpha_i\cong V_{[c^{\infty}]}$ is a $K$-vector space of finite dimension, it is  of finite length  also as a $T$-module.  Hence $\Uni E{c-1}=\bigcup L_K(E)\alpha_i$, where the $L_K(E)\alpha_i$ are  $T$-modules of finite length, and  $\Uni E{c-1}$ is a uniserial right $T$-module for which  the $L_K(E)\alpha_i$'s are the unique $T$-submodules.  Thus $\Uni E{c-1}$ is an artinian right $T$-module,  and so is linearly compact. 
 By \cite[Lemma 4.2.8]{Prest}  
 %[M. Prest, \emph{Purity, Spectra and Localisation}, Lemma 4.2.8], 
 we get that the left $L_K(E)$-module $\Uni E{c-1}$ is pure-injective.  (The quoted result says: If a module is linearly compact over its endomorphism ring, then it is algebraically compact and hence pure-injective.)
 Therefore we may invoke   \cite[Lemma 3.3.4]{GT} to conclude that  the functor $\Ext^1(-, \Uni E{c-1})$ sends direct limits  to inverse  limits.
\end{proof}

\begin{proposition}\label{sourceloopresult}
Let $E$ be a finite graph with source loop $c$.   Then the Pr\"ufer module $\Uni E{c-1}$ is injective.  Indeed, $\Uni E{c-1}$  is the injective envelope of $L_K(E)\alpha_1\cong V_{[c^{\infty}]}$.
\end{proposition}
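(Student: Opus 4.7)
The plan is to verify injectivity via Baer's criterion, that is, to show $\Ext^1_{L_K(E)}(L_K(E)/I, \Uni E{c-1}) = 0$ for every left ideal $I$ of $L_K(E)$. All the main ingredients have already been assembled in the preceding propositions; the proof will be a synthesis.

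First I would reduce from an arbitrary left ideal to the finitely generated case. Given any left ideal $I$ of $L_K(E)$, write $I = \bigcup_{\lambda} I_\lambda$ as the directed union of its finitely generated sub-ideals, so that $L_K(E)/I \cong \varinjlim_{\lambda} L_K(E)/I_\lambda$, where the transition maps are the obvious surjections. By Proposition~\ref{Ucpureinjective}, the pure-injectivity of $\Uni E{c-1}$ (established via linear compactness over its endomorphism ring) yields
\[
\Ext^1_{L_K(E)}\bigl(L_K(E)/I,\ \Uni E{c-1}\bigr) \;\cong\; \varprojlim_{\lambda}\, \Ext^1_{L_K(E)}\bigl(L_K(E)/I_\lambda,\ \Uni E{c-1}\bigr).
\]
But each term on the right vanishes: since $L_K(E)$ is a B\'ezout ring, every finitely generated left ideal is principal, and Proposition~\ref{prop:div} precisely gives $\Ext^1(L_K(E)/I_\lambda,\Uni E{c-1})=0$. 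Hence the inverse limit is $0$, and by Baer's criterion $\Uni E{c-1}$ is injective.

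For the second assertion, I would argue that $\Uni E{c-1}$ is an essential extension of its unique simple submodule $L_K(E)\alpha_1$. This is immediate from Theorem~\ref{Pruferisuniserialandartinian}(2): since $\Uni E{c-1}$ is uniserial, every nonzero submodule is comparable with (and hence contains) the minimal nonzero submodule $L_K(E)\alpha_1$, which by Corollary~\ref{Vcinftyisotoquotient} is isomorphic to $V_{[c^\infty]}$. An injective essential extension of a module is, by definition, its injective envelope, so $\Uni E{c-1} = E(V_{[c^\infty]})$.

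In this outline there is no real obstacle left to overcome — all the hard work has been done in Propositions~\ref{prop:div} and~\ref{Ucpureinjective}. The only point that requires care is verifying that the hypotheses of the pure-injectivity/inverse-limit lemma genuinely apply to the direct system $\{L_K(E)/I_\lambda\}$; but this is a direct system of cyclic left $L_K(E)$-modules with $L_K(E)$-linear transition maps, which is exactly the setting in which the conclusion of Proposition~\ref{Ucpureinjective} was stated.
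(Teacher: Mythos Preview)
Your proof is correct and follows essentially the same route as the paper's: reduce Baer's criterion to finitely generated ideals via the direct-limit formula for $\Ext^1$ granted by Proposition~\ref{Ucpureinjective}, then invoke Proposition~\ref{prop:div} for each term, and finally deduce the injective-envelope statement from essentiality of the socle. Your justification of essentiality via uniseriality (Theorem~\ref{Pruferisuniserialandartinian}(2)) is slightly more explicit than the paper's, which simply asserts that $L_K(E)\alpha_1$ is essential.
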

\begin{proof}
In order to check the injectivity of $\Uni E{c-1}$, we apply  Baer's Lemma. We need only check that $\Uni E{c-1}$ is injective relative to any short exact sequence of the form  $0\to I\to L_K(E)\to L_K(E)/I\to 0$. This  is equivalent to showing that $\Ext^1_{L_K(E)}(L_K(E)/I, \Uni E{c-1})=0$ for any left ideal $I$ of $L_K(E)$. Write  $I=\varinjlim I_{\lambda}$, where the $I_{\lambda}$ are the finitely generated submodules of $I$.  It is standard that $L_K(E)/I=\varinjlim L_K(E)/{I_{\lambda}}$.  So now applying the functor $\Ext^1_{L_K(E)}(-, \Uni E{c-1})$,  we get
\begin{align*}
\Ext^1_{L_K(E)}(L_K(E)/I, \Uni E{c-1}) &=\Ext^1_{L_K(E)}(\varinjlim L_K(E)/{I_{\lambda}}, \Uni E{c-1})  \hspace{.2in} \\
% \mbox{ (standard)} \\
& = \varprojlim \Ext^1(L_K(E)/{I_{\lambda}}, \Uni E{c-1})   \hspace{.5in}  \mbox{  (by Proposition  \ref{Ucpureinjective})} \\
& = 0  \hspace{2.4in}   \mbox{(by Proposition \ref{prop:div})}. 
\end{align*}
Since $L_K(E)\alpha_1$ is an essential submodule of $\Uni E{c-1}$, the last statement follows.
%By Proposition \ref{prop:div},  $\Ext^1(L_K(E)/{I_{\lambda}}, \Uni E{c-1})=0$ for all $\lambda$.  But then Proposition  \ref{Ucpureinjective} applies  to yield the second equality of the following:
%$$\Ext^1(L_K(E)/I, \Uni E{c-1})=\Ext^1(\varinjlim L_K(E)/{I_{\lambda}}, \Uni E{c-1})=\varprojlim \Ext^1(L_K(E)/{I_{\lambda}}, \Uni E{c-1})=  0.$$
% So we get that $\Uni E{c-1}$ is injective. We conclude, since $L_K(E)\alpha_1$ is an essential submodule of $\Uni E{c-1}$.
% 
% \bigskip
% 
% {\bf A/F:    That $U_c$  is the injective envelope follows from ... ???   }
% 
\end{proof} 

{\it  Proof of Theorem \ref{thm:injective}(1)}.   This now follows immediately from Propositions \ref{reducetosourceloop}, \ref{prop;endomorphism},  and \ref{sourceloopresult}.

\bigskip

\begin{center}
 \textsc{Acknowledgements}
\end{center}

The first  author is partially supported by a Simons Foundation Collaboration Grants for Mathematicians Award \#208941.     The second and third authors are supported by Progetto di Eccellenza Fondazione Cariparo ``Algebraic structures
and their applications: Abelian and derived categories, algebraic entropy and representation of algebras''.    Part of this work was carried out during a visit by the second and third authors to the University of Colorado Colorado Springs, and during a visit by the first author to the Universit\`{a} degli Studi di Padova.    The authors are grateful for the support of these institutions.

\bigskip

\end{document}